\documentclass{article}
\usepackage[utf8]{inputenc}
\usepackage{amsmath}
\usepackage{amssymb}
\usepackage{tikz}
\usepackage{hyperref}
\usepackage{graphicx}
\usepackage{algorithmic}
\usepackage{algorithm}
\usepackage{multirow}
\usepackage{subfig}
\usepackage[noabbrev, capitalise]{cleveref}
\usepackage{authblk}
\usepackage[toc,page]{appendix}
\usepackage{amsthm,thmtools}
\usepackage{thm-restate}
\usepackage{float}
\usetikzlibrary{positioning}

\theoremstyle{plain}
\declaretheorem[name=Proposition, numberwithin=section]{prop}
\declaretheorem[name=Corollary, numberwithin=prop]{corollary}

\title{Gaussian belief propagation solvers for nonsymmetric systems of linear equations}
\author{Vladimir Fanaskov\thanks{Vladimir.Fanaskov@skoltech.ru}}
\affil{Center for Design, Manufacturing, and Materials, Skolkovo Institute of Science and Technology,
Bolshoy Boulevard 30, bld. 1,
Moscow, Russia, 121205.}
\date{}
\begin{document}
\maketitle
\begin{abstract}
In this paper, we argue for the utility of deterministic inference in the classical problem of numerical linear algebra, that of solving a linear system. We show how the Gaussian belief propagation solver, known to work for symmetric matrices can be modified to handle nonsymmetric matrices. Furthermore, we introduce a new algorithm for matrix inversion that corresponds to the generalized belief propagation derived from the cluster variation method (or Kikuchi approximation). We relate these algorithms to LU and block LU decompositions and provide certain guarantees based on theorems from the theory of belief propagation. All proposed algorithms are compared with classical solvers (e.g., Gauss-Seidel, BiCGSTAB) with application to linear elliptic equations. We also show how the Gaussian belief propagation can be used as multigrid smoother, resulting in a substantially more robust solver than the one based on the Gauss-Seidel iterative method.
\end{abstract}
\section{Introduction}
A basic problem of numerical linear algebra is to solve a linear equation 
\begin{equation}\label{linear_problem}
    \mathbf{A}\mathbf{x} = \mathbf{b}
\end{equation}
with an invertible matrix $\mathbf{A}$. The textbook technique is the LU decomposition, equivalent to Gaussian elimination \cite[ch. 3]{golub2012matrix}. However, when $\mathbf{A}$ is large and sparse, algorithms that exploit sparsity are used instead of the direct elimination \cite{davis2016survey}. Among iterative methods for sparse systems, one can mention classical relaxation techniques such as  Gauss-Seidel (GS), Jacobi, Richardson, and projection methods such as conjugate gradients, generalized minimal residuals, biconjugate gradients, and others \cite{saad2001iterative,saad2003iterative}. 

An easy way to understand the projection methods is to reformulate the original equation as an optimization problem \cite{shewchuk1994introduction}. For example, for a symmetric positive-definite matrix $\mathbf{A}$, one has  
\begin{equation}\label{optimization_problem}
    \mathbf{x}^{\star} = \arg\min_{\mathbf{x}}\left(\frac{\mathbf{x}^T \mathbf{A} \mathbf{x}}{2} - \mathbf{x}^T \mathbf{b}\right).
\end{equation}
Such a reformulation allows one to apply new techniques and leads to methods of steepest descent, conjugate directions, and cheap and efficient conjugate gradients \cite{Hestenes&Stiefel:1952}. 

Another reformulation of the problem is known, but is less explored. It also goes back to Gauss and his version of elimination. To derive an LU solution of \eqref{linear_problem}, one can consider the probability density function $p(\mathbf{x})$ of multivariate normal distribution (see also equation~(\ref{Gauss_Markov_model})  below)
\begin{equation}\label{multivariate_normal}
    p(\mathbf{x})\sim\exp\left(-\frac{\mathbf{x}^T \mathbf{A} \mathbf{x}}{2} + \mathbf{b}^T\mathbf{x}\right).
\end{equation}
We can consider the first component $x_1$ of $\mathbf{x}$
and integrate it out in $(\ref{multivariate_normal})$ (a process called ``marginalization''). The resulting marginal distribution for the remaining components $x_2,x_3,\dots$ is again multivariate normal, but with the covariance matrix given by the Schur complement of $A_{11}$\footnote{In the article we use boldface for matrices or matrix blocks, and regular font for scalar values and matrix components. In this case $A_{11}$ is an element of the matrix $\mathbf{A}$ in the first row and the first column, and $\mathbf{A_{22}}$ is a square matrix that contains all elements of $\mathbf{A}$ excluding the first row and the first column.} and the mean vector modified accordingly, i.e. 
\begin{equation}
    \mathbf{A_{22}} \leftarrow \mathbf{A_{22}} - \frac{\mathbf{A}_{\mathbf{2}1}\mathbf{A}_{1\mathbf{2}}}{A_{11}},~\mathbf{b_2} \leftarrow \mathbf{b_2} - \frac{\mathbf{A}_{\mathbf{2}1} b_1}{A_{11}}.
\end{equation}
It is well known that the LU decomposition consists of the very same steps \cite{stewart1998matrix}. When $x_1$ is not a scalar, but a subset of variables, marginalization of multivariate normal distribution results in a block LU decomposition.

Thus, the most popular direct technique for the solution of linear equations with dense matrices is intimately connected with the marginalization problem, which belongs to the class of inference problems. Recently, many other intriguing connections between statistical inference and linear algebra have been pointed out. For instance, in  \cite{cockayne2018bayesian}, \cite{hennig2015probabilistic}, and  \cite{bartels2018probabilistic}, the authors provide a method to recover the Petrov-Galerkin condition from the Bayesian update and construct a Bayesian version of the conjugate gradients. Paper \cite{owhadi2017multigrid} constructs a state-of-the-art multigrid solver using the game theory and statistical inference. These works demonstrate that ideas from statistical inference allow for new and useful insights into problems of linear algebra. It is thus reasonable to explore how other inference algorithms are translated to the realm of numerical linear algebra. Among them are expectation propagation \cite{minka2001expectation}, Markov chain Monte Carlo, mean field,  other variational Bayesian approximations \cite[chapters 8, 10]{Bishop:2006:PRM:1162264}, \cite{wainwright2008graphical}, and belief propagation with its generalized counterparts. The latter two are the focus of the present work.

The first comparison between classical methods and belief propagation appeared in  \cite{weiss2000correctness}. Then in \cite{shental2008gaussian}, authors argued explicitly for the belief propagation as a solver and later, Bickson \cite{bickson2008gaussian} presented a more systematic treatment of the Gaussian belief propagation (GaBP) in the same context. Among other proposed methods was an algorithm that treats nonsymmetric matrices through diagonal weighting \cite[5.4]{bickson2008gaussian} and the usual trick from linear algebra, $\mathbf{A}\rightarrow \mathbf{A}^T\mathbf{A}$. Both techniques are of limited use because of slow convergence in the first case and fill-in in the second. We improve on these results and propose several new algorithms.

In particular, in this work we:
\begin{itemize}
    \item explain how belief propagation can be applied to nonsymmetric matrices with no computation overhead compared to the original belief propagation (which was limited to symmetric matrices) (\cref{algorithm:non_symmetric_GaBP});
    \item design a family of linear solvers based on the generalized belief propagation (\cref{algorithm:non_symmetric_two_layers_GaBP}) and relate them to the block LU decomposition (see \cref{Section:Generalized_GaBP_solvers.Subsection:Elimination_perspective});
    \item introduce a two-layer region graph and derive generalized belief propagation rules (\ref{Two_layers_GaBP_rules}) that are substantially less  demanding computationally compared to the basic  generalized belief propagation algorithm (see (\ref{computational_gain}));
    \item show how proofs of sufficient condition for convergence and consistency for the original GaBP can be modified to hold for the new algorithms (for GaBP see \ref{Appendices:Consistency_of_GaBP}, \ref{Appendices:Convergence_of_GaBP}, for generalized GaBP -- \ref{Appendices:Consistency_of_generalized_GaBP}, \ref{Appendices:Convergence_of_generalized_GaBP});
    \item explain how one can speed up GaBP using multigrid methodology which results in a robust solver (see \cref{fig:convergence_anisotropic_problem});
    \item implement the new algorithms \cite{git_GaBP} and benchmark them against several classical multigrid solvers.
\end{itemize}
The rest of the paper is organized as follows. In \cref{Section:Gaussian_belief_propagation.Subsection:GaBP_from_the_elimination_perspective}, we start with the intuitive explanation of GaBP based on the connection between the algorithm and Gaussian elimination. The general description of how to treat the problem (\ref{linear_problem}) as an inference problem together with the basic terminology and main facts about Gaussian belief propagation are introduced in \cref{Section:Gaussian_belief_propagation}.
In \cref{Section:Gaussian_belief_propagation.Section:GaBP_for_the_nonsymmetric_linear_system} we introduce GaBP that can be used for nonsymmetric matrices, prove consistency of the proposed algorithm, and establish a sufficient condition for convergence in appendices  \ref{Appendices:Consistency_of_GaBP}, \ref{Appendices:Convergence_of_GaBP}. In \cref{Section:Generalized_GaBP_solvers}, extensions of the belief propagation are given: in \ref{Section:Generalized_GaBP_solvers.Subsection:Parent-to-child_algorithm}, we describe the generalized belief propagation algorithm (GaBP) (parent-to-child in \cite{yedidia2005constructing}); in \ref{Section:Generalized_GaBP_solvers.Subsection:Generalized_two_layers_GaBP}, we derive message update rules for region graphs with two layers, and in \ref{Section:Generalized_GaBP_solvers.Subsection:Elimination_perspective}, we discuss the generalized GaBP from the elimination perspective and explain why the algorithm can be applied to the case $\mathbf{A}^{T}\neq \mathbf{A}$; the resulting algorithm is introduced in \cref{Section:Generalized_GaBP_solvers.Subsection:The_algorithm} and analyzed in \ref{Appendices:Consistency_of_generalized_GaBP}, \ref{Appendices:Convergence_of_generalized_GaBP}. \Cref{Section:GaBP_as_a_smoother_for_the_multigrid_scheme} explains how to use GaBP within a multigrid scheme, we discuss smoothing properties, complexity and describe a rather  unusual behaviour for singularly perturbed elliptic equations. \Cref{Section:Numerical_examples} contains numerical examples. In \cref{Section:Conclusion}, we summarize the main results and discuss possible future research.

\section{Gaussian belief propagation}\label{Section:Gaussian_belief_propagation}
\subsection{GaBP from the elimination perspective}\label{Section:Gaussian_belief_propagation.Subsection:GaBP_from_the_elimination_perspective}
\begin{figure}
    \centering
    \subfloat[]{
    \label{fig:elimination:a}
        \begin{tikzpicture}[
            sq/.style={circle, draw=black, minimum size=1mm},
            ]
            \node[sq] at (0, 0) (5) {$5$};
            \node[] at (-1, -0.5) (5_3) {$M_{53}$};
            \node[sq] at (-1, -1.5) (3) {$3$};
            \node[sq] at (1, -1.5) (4) {$4$};
            \node[] at (1, -0.5) (5_4) {$M_{54}$};
            \node[sq] at (-2, -3) (1) {$1$};
            \node[] at (-2, -2) (3_1) {$M_{31}$};
            \node[sq] at (0, -3) (2) {$2$};
            \node[] at (0, -2) (3_2) {$M_{32}$};
            \draw[->] (3) -- (5);
            \draw[->] (1) -- (3);
            \draw[->] (2) -- (3);
            \draw[->] (4) -- (5);
        \end{tikzpicture}
    }\quad
    \subfloat[]{
    \label{fig:elimination:b}
        \begin{tikzpicture}[
            sq/.style={circle, draw=black, minimum size=1mm},
            ]
            \node[sq] at (0, 0) (3) {$3$};
            \node[sq] at (-1.5, -1.5) (1) {$1$};
            \node[] at (-1.2, -0.5) (3_1) {$M_{31}$};
            \node[sq] at (0, -2) (2) {$2$};
            \node[] at (0.5, -1.3) (3_2) {$M_{32}$};
            \node[sq] at (1.5, -1.5) (5) {$5$};
            \node[] at (1.2, -0.5) (3_5) {$M_{35}$};
            \node[sq] at (1.5, -3) (4) {$4$};
            \node[] at (2, -2.3) (5_4) {$M_{54}$};
            \draw[->] (5) -- (3);
            \draw[->] (4) -- (5);
            \draw[->] (1) -- (3);
            \draw[->] (2) -- (3);
        \end{tikzpicture}
    }
    \caption{Both (a) and (b) sketch the graph, corresponding to the matrix $\mathbf{A}$ fron equation~(\ref{toy_GaBP}). We use $M_{ji}$ to represent the pair of messages $\left(\Lambda_{ji}, \mu_{ji}\right)$ (see equation~(\ref{toy_messages})) from the node $i$ to $j$. Figures (a)  and (b) shows different order of elimination. For example, in case of (a) one first exclude $x_1$ and $x_2$ from the equation for $x_3$ and then solve resulting equation to obtain $x_5$.}
    \label{fig:elimination}
\end{figure}
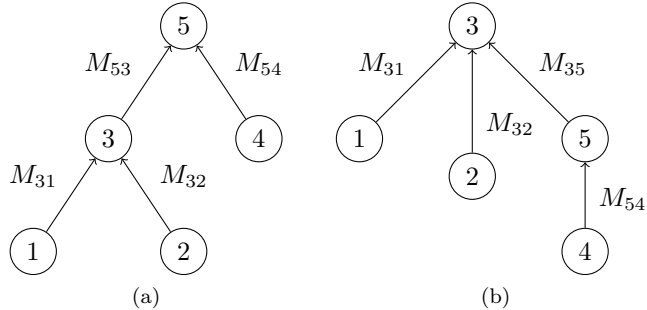
To build intuition about GaBP, we consider connection with the Gaussian elimination first. Ideas of this section are similar to those in \cite{plarre2004extended}, but the presentation is more straightforward and after appropriate modifications (see \cref{Section:Gaussian_belief_propagation.Section:GaBP_for_the_nonsymmetric_linear_system}), applies to non-symmetric matrices as well. 

To illustrate the main ideas, consider a linear problem $\mathbf{A}\mathbf{x} = \mathbf{b}$ with the matrix and right-hand side defined as
\begin{equation}\label{toy_GaBP}
    \mathbf{A} = 
    \begin{pmatrix}
        A_{11} & 0 & A_{13} & 0 & 0\\
        0 & A_{22} & A_{23} & 0 & 0\\
        A_{31} & A_{32} & A_{33} & 0 & A_{35}\\
        0 & 0 & 0 & A_{44} & A_{45}\\
        0 & 0 & A_{53} & A_{54} & A_{55}\\
    \end{pmatrix},~\mathbf{b}\in \mathbb{R}^{5}.
\end{equation}
For simplicity, require that $\mathbf{A}$ be positive definite and that all elements of $\mathbf{A}$, not explicitly indicated as zeros, are nonzero.

To obtain GaBP rules, we introduce a graph of the matrix~(\ref{toy_GaBP}). For this section, it is sufficient to associate the set of vertices with the set of diagonal terms and the collection of edges with nonzero entries $A_{ij},~i\neq j$. One can find the resulting graph in figure~\ref{fig:elimination}. The correspondence between graphs and matrices is discussed in detail in two subsequent subsections~\ref{Section:Gaussian_belief_propagation.Subsection:Conventional_belief_propagation}, \ref{Section:Gaussian_belief_propagation.Section:GaBP_for_the_nonsymmetric_linear_system}.

Suppose one wants to calculate  variable $x_5$. To do that, we exclude variables $x_1$, $x_2$ from the equation for $x_3$ and then eliminate variables $x_3$, $x_4$ from the equation for $x_5$
\begin{subequations}\label{elimination_5}
    \begin{align}
    &\underbrace{\left(A_{33} - \frac{A_{31}A_{13}}{A_{11}} - \frac{A_{32}A_{23}}{A_{22}}\right)}_{=\widetilde{A}_{33}} x_3 + A_{35} x_5 = \underbrace{b_3 - \frac{A_{31} b_1}{A_{11}} - \frac{A_{32} b_2}{A_{22}}}_{=\widetilde{b}_3} \label{modified_x5_intermediate};\\
    &\left(A_{55} - \frac{A_{53} A_{35}}{\widetilde{A}_{33}} - \frac{A_{54} A_{45}}{A_{44}}\right)x_5 = b_5 - \frac{A_{53}\widetilde{b}_3}{\widetilde{A}_{33}} - \frac{A_{54}b_4}{A_{44}}. \label{modified_x5}
    \end{align}
\end{subequations}
Figure~\ref{fig:elimination:a} captures this particular elimination order. In the same vein, to find $x_3$ one may follow the order presented in figure~\ref{fig:elimination:b}. The resulting equations are
\begin{subequations}\label{elimination_3}
    \begin{align}
    &\underbrace{\left(A_{55} - \frac{A_{54}A_{45}}{A_{44}}\right)}_{=\widetilde{A}_{55}}x_5 + A_{53}x_3 = \underbrace{b_5 - \frac{A_{53}b_4}{A_{44}}}_{=\widetilde{b}_{5}};\\
    &\left(A_{33} - \frac{A_{35}A_{53}}{\widetilde{A}_{55}} - \frac{A_{31}A_{13}}{A_{11}} - \frac{A_{32}A_{23}}{A_{22}}\right)x_3 = b_3 - \frac{A_{31}b_1}{A_{11}} - \frac{A_{32}b_2}{A_{22}} - \frac{A_{35}\widetilde{b}_5}{\widetilde{A}_{55}}. \label{modified_x3}
    \end{align}
\end{subequations}
From these calculations, one can make two observations:
\begin{enumerate}
    \item In the course of elimination one successively changes the diagonal elements $A_{jj}$ and the right-hand side $b_{j}$.
    \item The exclusion schemes in figures \ref{fig:elimination:a} and \ref{fig:elimination:b} share the same computations. For example, terms $A_{31}A_{13}\big/A_{11}$, $A_{32}A_{23}\big/A_{22}$ and $A_{54} A_{45}\big/A_{44}$ appear on the way to equation~(\ref{modified_x3}) as well as to (\ref{modified_x5}). It would be more advantageous to reuse the same computations, not to redo them each time one needs to eliminate a variable.
\end{enumerate}
The first observation suggests that one can introduce corrections to the diagonal terms and $b_{j}$, that come from the elimination of variable~$i$. For the sake of convenience, we denote them $\Lambda_{ji}$ and $\mu_{ji}\Lambda_{ji}$, respectively. For example, equation~(\ref{modified_x5_intermediate}) becomes
\begin{equation}\label{toy_messages}
    \left(A_{33} + \Lambda_{31} + \Lambda_{32}\right) x_3 + A_{35}x_{5} = b_{3} +  \Lambda_{32}\mu_{32} + \Lambda_{31}\mu_{31}.
\end{equation}
Since corrections are the same for any order of elimination, to reuse them, we can regard $\Lambda_{ji}$ and $\mu_{ji}$ as a message that node $i$ sends to node $j$ along the edge of the graph. Once computed, these messages are in use in expressions like (\ref{toy_messages}) and (\ref{toy_messages_2}). To complete rewriting the elimination in terms of messages, one needs to introduce the rules to update messages when a new variable is excluded. To derive the rules, we rewrite equation~(\ref{modified_x3}) using the definition of messages
\begin{equation}\label{toy_messages_2}
    \left(A_{55} + \Lambda_{53} + \Lambda_{54}\right) x_5 = b_5 + \Lambda_{53}\mu_{53} + \Lambda_{54}\mu_{54},
\end{equation}
and use (\ref{toy_messages}) to get
\begin{equation}\label{toy_update_rules}
    \Lambda_{53} = -\frac{A_{35} A_{53}}{A_{33} + \Lambda_{31} + \Lambda_{32}},~\mu_{53} = \frac{b_{3} + \Lambda_{31}\mu_{31} + \Lambda_{32}\mu_{32}}{A_{35}}.
\end{equation}
It is easy to see that one needs to accumulate all messages from neighbors of $i$ except for $j$ to send the message from node $i$ to node $j$. Since the update rule includes only messages from the previous stages of elimination, one can iterate equations like (\ref{toy_update_rules}) till convergence. And then, when all messages arrive, the solution can be read off as follows
\begin{equation}
    x_{j} = \frac{b_j + \sum\limits_{k\in\text{neighbours of }j} \Lambda_{jk}\mu_{jk}}{A_{jj} + \sum\limits_{k\in\text{neighbours of }j} \Lambda_{jk}}.
\end{equation}
Note that \eqref{elimination_3} and \eqref{elimination_5} have exactly this form. Equations for the update of messages that we deduced in this section coincide with the GaBP update rules given by (\ref{GaBP_rules}), which are  derived from the probabilistic perspective below.

To summarize, the GaBP rules can be understood as a scheme that propagates messages on the graph, corresponding to the matrix of the linear system under consideration. These messages, namely $\Lambda_{ji}$ and $\mu_{ji}$, represent the corrections to the  diagonal terms of matrix $\mathbf{A}$ and to the right-hand side $\mathbf{b}$, resulting from the elimination of variable $x_{i}$ from the $j$-th equation, $A_{jj} x_{j} + A_{ji} x_{i} + \dots = b_{j}$. Consistency, convergence, stopping criteria, and other practical matters are discussed in the following two sections.
\subsection{Conventional belief propagation}\label{Section:Gaussian_belief_propagation.Subsection:Conventional_belief_propagation}
Here we give a more traditional introduction to GaBP as a technique for statistical inference in graphical models. Following \cite{bickson2008gaussian,shental2008gaussian}, we reformulate \eqref{linear_problem} as an inference problem. For this purpose, consider a small subset of undirected graph models that are  known as Gauss-Markov random fields. First, we define a pairwise Markov random field. The graph $\Gamma$ is the set of edges $\mathcal{E}$ and vertices $\mathcal{V}$. Each vertex $i$ corresponds to the random variable $x_i$ (discrete or continuous), and each edge corresponds to interactions between variables. The set of non-negative integrable functions $\left\{\phi_i, \psi_{ij}\right\}$ together with the graph $\Gamma$ completely specifies the form of the probability density function of a pairwise Markov random field
\begin{equation}\label{Gibbs}
    p(x) = \frac{1}{Z}\prod_{i\in \mathcal{V}}\phi_i(x_i)\prod_{(ij)\in\mathcal{E}}\psi_{ij}(x_i, x_j)\equiv\frac{1}{Z}\exp\left(-E(\mathbf{x})\right).
\end{equation}
Here, $Z$ is a normalization constant (known in statistical physics as a partition function). The second equation in \eqref{Gibbs} (that is, the Gibbs distribution) should be considered as a definition of energy $E(\mathbf{x})$. The Gauss-Markov random field is a particular instance of a pairwise Markov model with a joint probability density function given by a multivariate normal distribution
\begin{equation}\label{Gauss_Markov_model}
    p(\mathbf{x}) = \frac{1}{Z} \exp\left(-\frac{\mathbf{x}^T \mathbf{A} \mathbf{x}}{2} + \mathbf{b}^T\mathbf{x}\right) \equiv \mathcal{N}\left(\mathbf{x}|\underset{\text{mean}}{\mathbf{A}^{-1}\mathbf{b}},\underset{\substack{\text{covariance} \\ \text{matrix}}}{\mathbf{A}^{-1}}\right),
\end{equation}
where $\mathbf{A}$ is a symmetric positive-definite matrix. The edges of $\Gamma$ correspond to the nonzero $A_{ij}$, and note that the splitting of the product in \eqref{Gibbs} into $\phi_i$ and  $\psi_{ij}$ is not unique. A common task in the inference process is a computation of a partial distribution (or a marginalization) 
\begin{equation}\label{marginalization}
    p_r(\mathbf{x}_r) = \sum_{\mathbf{x}\backslash \mathbf{x}_r}p(\mathbf{x}).
\end{equation}
Integrals replace sums if $\mathbf{x}\in\mathbb{R}^{N}$. For the Gauss-Markov model, marginal distributions are known explicitly. For individual components of the vector $\mathbf{x}$, which is distributed according to (\ref{Gauss_Markov_model}), one can obtain distributions in closed form 
\begin{equation}
    p_i(x_i) = \mathcal{N}\left(x_i|\left(\mathbf{A}^{-1}\mathbf{b}\right)_i, \left(\mathbf{A}^{-1}\right)_{ii}\right)\equiv \mathcal{N}\left(x_i|\mu_i, \beta_i\right).
\end{equation}
As the means of marginal distributions for the model (\ref{Gauss_Markov_model}) coincide with the elements of the solution vector for \eqref{linear_problem}, methods from the domain of probabilistic inference can be applied directly to obtain the solution. 

A popular algorithm that exploits the structure of the underlying graph to find the marginal distribution efficiently is Pearl's belief propagation \cite{pearl2014probabilistic}. Pearl's algorithm operates with local messages that spread from node to node along the graph edges, and beliefs (approximate or exact marginals) are computed as a normalized product of all incoming messages after the convergence. More precisely, belief propagation consists of (i) the message update rule
\begin{equation}
    m_{ij}(x_j)\leftarrow\sum\limits_{x_i}\phi_i(x_i)\psi_{ij}(x_i, x_j)\prod_{k\in N(i)\backslash j}m_{ki}(x_i),
\end{equation}
where $m_{ij}$ is a message from  node $i$ to node $j$ and $N(i)$ is the set of neighbors of the node $i$, and (ii) the formula for marginals
\begin{equation}
    b_i(x_i) \sim \phi_i(x_i)\prod\limits_{k\in N(i)} m_{ki}(x_i).
\end{equation}

Although, for continuous random variables the problem of marginalization and the algorithm of belief propagation are harder in general, it is not the case for the normal distribution. Namely, for the Gauss-Markov model, one can parameterize messages in the form of the normal distribution
\begin{equation}
    m_{ji}(x_i)\sim\exp\left(-\frac{\Lambda_{ji}\left(x_i-\mu_{ji}\right)^2}{2}\right),
\end{equation}
and explicitly derive update rules, means, and precision
\begin{equation}\label{GaBP_rules}
    \begin{split}
        &\mu_{ji}^{(n+1)} = \frac{b_j + \sum\limits_{k\in N(j)\backslash i} \Lambda_{kj}^{(n)} \mu_{kj}^{(n)}}{A_{ji}},~
        \Lambda_{ji}^{(n+1)} = -\frac{A_{ij}A_{ji}}{ A_{jj} + \sum\limits_{k\in N(j)\backslash i} \Lambda_{kj}^{(n)}};\\
        &\mu_i^{(n)} = \frac{b_i + \sum\limits_{j\in N(i)} \Lambda_{ji}^{(n)} \mu_{ji}^{(n)}}{ A_{ii} + \sum\limits_{j\in N(i)}\Lambda_{ji}^{(n)}},~
        \beta^{(n)}_i = A_{ii} + \sum_{j\in N(i)} \Lambda^{(n)}_{ji}.
    \end{split}
\end{equation}
These update rules correspond to the flood schedule such that at the current iteration step, each node sends messages to all its neighbours based on messages received at the previous step. Equations for the mean and precision should be put to use only after saturation according to some criteria, for example $|\mu^{(n+1)}-\mu^{(n)}|\leq \text{tolerance}$, and the same for $\mathbf{\Lambda}$. Rules (\ref{GaBP_rules}) are collectively known as Gaussian belief propagation.

Belief propagation was designed to give an exact answer if $\Gamma$ has no loops. In the presence of loops, the result appears to be approximate if delivered at all. In the case of GaBP, the situation is more optimistic. We briefly recall some useful facts about GaBP that we discuss later in more detail. If GaBP converges on the graph of arbitrary topology, the means are exact, but variances can be incorrect \cite{weiss2000correctness}. The best sufficient condition for convergence of the Gauss-Markov model with symmetric positive-definite matrix can be found in \cite{malioutov2006walk}, we discuss it later in greater detail. The fixed point of GaBP is unique \cite{heskes2004uniqueness}. On the tree,  GaBP is equivalent to the Gaussian elimination \cite{plarre2004extended}. 

Many different schemes that extend belief propagation and GaBP have been developed \cite{yedidia2003understanding}, \cite{el2012relaxed}, \cite{sudderth2004embedded}, \cite{minka2001expectation}, \cite{Elidan:2006:RBP:3020419.3020440}. Here, we are mainly interested in generalized belief propagation proposed in \cite{yedidia2003understanding} and subsequently developed in \cite{yedidia2001generalized}, \cite{yedidia2001bethe} ,\cite{yedidia2005constructing}. This new algorithm is significantly more accurate \cite[Fig. 15]{yedidia2005constructing} than Pearl's algorithm, but at the same time it can be computationally costly. In what follows, we show how to use the generalized belief propagation in the context of numerical linear algebra.

\subsection{GaBP for a nonsymmetric linear system}\label{Section:Gaussian_belief_propagation.Section:GaBP_for_the_nonsymmetric_linear_system}

As explained in \cref{Section:Gaussian_belief_propagation.Subsection:GaBP_from_the_elimination_perspective}, the GaBP rules can be understood as corrections to the right-hand side and the diagonal elements of the matrix under successive elimination of variables. It means that in principle, one can apply the rules to solve at least some nonsymmetric systems. However, there is a problem which is specific to nonsymmetric case. Namely, it is possible to have $A_{ij} = 0$ and $A_{ji} \neq 0$. In this case, rules (\ref{GaBP_rules}) lead to singularity as $A_{ij}$ appears in the denominator. Since parametrization of messages is not unique both from the elimination and probabilistic perspectives, it is possible to define new set of messages $\widetilde{\mathbf{\Lambda}}$ and $\mathbf{m}$ as follows
\begin{equation}\label{nonsymmetric_GaBP_messages}
    m_{ji}^{(n)} \equiv \mu_{ji}^{(n)}\Lambda_{ji}^{(n)},~\widetilde{\Lambda}_{ji}^{(n)} \equiv \Lambda_{ji}^{(n)}\big/A_{ji}.
\end{equation}
Then update rules become 
\begin{equation}\label{nonsymmetric_GaBP_rules}
    \begin{split}
        &m_{ji}^{(n+1)} = \widetilde{\Lambda}_{ji}^{(n+1)}\left(b_j + \sum\limits_{k\in N(j)\backslash i} m_{kj}^{(n)}\right),~\widetilde{\Lambda}_{ji}^{(n+1)} = -\frac{A_{ij}}{ A_{jj} + \sum\limits_{k\in N(j)\backslash i} \widetilde{\Lambda}_{kj}^{(n)} A_{kj}};\\
        &\mu_i^{(n)} = \frac{b_i + \sum\limits_{j\in N(i)} m_{ji}^{(n)}}{ A_{ii} + \sum\limits_{j\in N(i)}\widetilde{\Lambda}_{ji}^{(n)}A_{ji}},~\beta^{(n)}_i = A_{ii} + \sum_{j\in N(i)} \widetilde{\Lambda}^{(n)}_{ji}A_{ji}.
    \end{split}
\end{equation}

\begin{algorithm}[t]
    \caption{GaBP for a nonsymmetric linear system.}
    \label{algorithm:non_symmetric_GaBP}
    \begin{algorithmic}
        \STATE Form directed graph $G = \left\{\mathcal{V}, \mathcal{E}\right\}$ based on $\mathbf{A}$.
        \WHILE{$\text{error}>\text{tolerance}$} 
            \FOR{$j \in \mathcal{V}$}
                \STATE $m = b_j + \sum\limits_{k\in N(j)} m_{kj}$
                \STATE $\Sigma = A_{jj} + \sum\limits_{k\in N(j)} \widetilde{\Lambda}_{kj} A_{kj}$
                \STATE $x_{j} \leftarrow m/\Sigma$
                \FOR{$(j, i) \in \mathcal{E}$}
                    \STATE $\widetilde{\Lambda}_{ji}\leftarrow -A_{ij}/\left( \Sigma - \widetilde{\Lambda}_{ij} A_{ij}\right)$
                    \STATE $\widetilde{\mu}_{ji} \leftarrow \widetilde{\Lambda}_{ji}\left(m - m_{ij}\right)$
                \ENDFOR
            \ENDFOR
            \STATE $\text{error} = \left\|\mathbf{A}\mathbf{x}-\mathbf{b}\right\|_{\infty}$
        \ENDWHILE
    \end{algorithmic}
\end{algorithm}

Note that reparametrization (\ref{nonsymmetric_GaBP_messages}) has a problem in that it is not one-to-one if $A_{ji}=0$. However, the quick look at the equations (\ref{elimination_5}), (\ref{elimination_3}) makes clear that indeed it is possible to define messages in that way. That is, if $A_{jk}=0$, one does not need to eliminate $x_k$ from the second equation so the message $\Lambda_{kj}$ is indeed zero. 

For a given matrix $\mathbf{A}\in \mathbb{R}^{N\times N}$, we construct a graph with $N$ vertices $v\in\mathcal{V}$ corresponding to the variables $x_1, \dots, x_N$ and the set of directed edges $\mathcal{E}$. The edge pointing from the vertex $j$ to the vertex $i$ belongs to the set of edges iff $A_{ij}\neq 0$, i.e. $A_{ij}\neq0\Leftrightarrow e_{ji}\in\mathcal{E}$. This definition fixes the correspondence between directed graphs and nonsymmetric matrices and allows us to use GaBP (see \cref{algorithm:non_symmetric_GaBP}) beyond its usual domain of applicability.

\Cref{algorithm:non_symmetric_GaBP} is sequential, but can run in parallel after some modifications. The stopping criteria can be different, for example, it is possible to use different norms, or $\text{error} = \left\|\mathbf{x}^{(n+1)}-\mathbf{x}^{(n)}\right\|_{\infty}$, or \begin{equation}
    \text{error} = \max\left(\left\|\widetilde{\boldsymbol{\mu}}^{(n+1)}-\widetilde{\boldsymbol{\mu}}^{(n)}\right\|_{\infty}, \left\|\widetilde{\mathbf{\Lambda}}^{(n+1)}-\widetilde{\mathbf{\Lambda}}^{(n)}\right\|_{\infty}\right).
\end{equation}
Note that the update of $\widetilde{\mathbf{\Lambda}}$ decouples from the one for $\widetilde{\boldsymbol{\mu}}$. So it is possible to construct an algorithm that computes only messages $\widetilde{\boldsymbol{\Lambda}}$ and returns diagonal elements for the inverse matrix. Later, these messages can be used in the course of all successive iterations if one resorts to the error correction scheme.  We discuss how the algorithm of this kind can be utilized to decrease the number of floating point operations in the context of a  multigrid scheme.

One of the central results of the present work is that two classical theorems from GaBP theory, summarized below, can be readily established for nonsymmetric matrices.

\begin{restatable}{theorem}{GaBPconsistency}
\label{prop:GaBP_consistency}
    If there is $N\in\mathbb{N}$ such that $\widetilde{\mu}^{(N+k)}_{e} = \widetilde{\mu}^{(N)}_{e}$, $\widetilde{\Lambda}^{(N+k)}_{e} = \widetilde{\Lambda}^{(N)}_{e}$ for all $e\in\mathcal{E}$ and for any $k\in\mathbb{N}$, then $\mu^{(N+k)}_i = \mu^{(N)}_i = \left(\mathbf{A}^{-1}\mathbf{b}\right)_i$.
\end{restatable}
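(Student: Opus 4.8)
The plan is to abandon the probabilistic/computation-tree picture and instead verify directly, by a finite and exact computation, that the stationary mean vector solves the system. Since the hypothesis freezes all edge messages -- equivalently $\widetilde{\Lambda}_{ji}$ and $m_{ji}$ in (\ref{nonsymmetric_GaBP_rules}), and therefore each $\mu_i$ -- for every $k$, it is enough to prove that the fixed-point vector $\boldsymbol{\mu}$ with components $\mu_i=Q_i/P_i$ satisfies $\mathbf{A}\boldsymbol{\mu}=\mathbf{b}$; invertibility of $\mathbf{A}$ then forces $\boldsymbol{\mu}=\mathbf{A}^{-1}\mathbf{b}$ and yields $\mu_i^{(N+k)}=\mu_i^{(N)}=(\mathbf{A}^{-1}\mathbf{b})_i$. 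Throughout I abbreviate the full and ``cavity'' precisions and numerators by $P_i=A_{ii}+\sum_{j\in N(i)}\widetilde{\Lambda}_{ji}A_{ji}$, $P_{j\backslash i}=A_{jj}+\sum_{k\in N(j)\backslash i}\widetilde{\Lambda}_{kj}A_{kj}$, $Q_i=b_i+\sum_{j\in N(i)}m_{ji}$ and $Q_{j\backslash i}=b_j+\sum_{k\in N(j)\backslash i}m_{kj}$, so that the stationary rules read $\widetilde{\Lambda}_{ji}=-A_{ij}/P_{j\backslash i}$ and $m_{ji}=\widetilde{\Lambda}_{ji}Q_{j\backslash i}$, with $P_i=P_{i\backslash j}+\widetilde{\Lambda}_{ji}A_{ji}$ and $Q_i=Q_{i\backslash j}+m_{ji}$.

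First I would expand the $i$-th residual. Replacing $A_{ij}$ by $-\widetilde{\Lambda}_{ji}P_{j\backslash i}$ (the $\widetilde{\Lambda}$ rule) and $A_{ii}$ by $P_i-\sum_{j\in N(i)}\widetilde{\Lambda}_{ji}A_{ji}$, and then using $P_i\mu_i=Q_i=b_i+\sum_j m_{ji}$ to let $P_i\mu_i$ absorb $b_i$, the whole expression collapses into a sum of per-edge contributions,
\[
(\mathbf{A}\boldsymbol{\mu})_i-b_i=\sum_{j\in N(i)}\widetilde{\Lambda}_{ji}\Big(Q_{j\backslash i}-A_{ji}\mu_i-P_{j\backslash i}\mu_j\Big).
\]
Because every index with $A_{ij}\neq0$ lies in $N(i)$ and the extra terms with $A_{ij}=0$ vanish, the left side is exactly $(\mathbf{A}\boldsymbol{\mu})_i-b_i$; the directed, nonsymmetric bookkeeping is harmless, since whenever $A_{ji}=0$ the corresponding message vanishes and its edge term drops out. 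Everything therefore reduces to the ``cavity consistency'' identity $Q_{j\backslash i}=A_{ji}\mu_i+P_{j\backslash i}\mu_j$ for each neighbouring pair.

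The crux is this last identity, and I expect it to be the only real obstacle, because the cancellation is invisible term by term until the $\widetilde{\Lambda}$ relation is injected. My plan is to write $\mu_j=Q_j/P_j$ with $Q_j=Q_{j\backslash i}+m_{ij}$ and $P_j=P_{j\backslash i}+\widetilde{\Lambda}_{ij}A_{ij}$, clear denominators, and substitute $m_{ij}=\widetilde{\Lambda}_{ij}Q_{i\backslash j}$ together with $A_{ji}=-\widetilde{\Lambda}_{ij}P_{i\backslash j}$. After simplification the claim reduces to the purely multiplicative precision identity
\[
P_{j\backslash i}\,P_i=P_{i\backslash j}\,P_j ,
\]
which holds unconditionally at a fixed point: cancelling the common summand $P_{i\backslash j}P_{j\backslash i}$, it is equivalent to $P_{j\backslash i}\widetilde{\Lambda}_{ji}A_{ji}=P_{i\backslash j}\widetilde{\Lambda}_{ij}A_{ij}$, and both members equal $-A_{ij}A_{ji}$ by the two $\widetilde{\Lambda}$ rules. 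This closes the argument.

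Finally I would emphasise what makes this the \emph{consistency} rather than the convergence statement: nowhere is a decay estimate or a limit over computation-tree depth required, only the existence of a stationary point of (\ref{nonsymmetric_GaBP_rules}); this is exactly why the means are exact while the precisions $\beta_i$ need not be. The classical symmetric GaBP result is recovered by setting $A_{ij}=A_{ji}$, and the entire modification for the nonsymmetric case amounts to carrying $A_{ij}$ and $A_{ji}$ as independent quantities throughout -- made legitimate by the reparametrisation (\ref{nonsymmetric_GaBP_messages}).
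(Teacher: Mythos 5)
Your proof is correct, and it takes a genuinely different route from the paper's. The paper follows Weiss and Freeman: it builds the computation tree $T_n(x_i)$, encodes the duplication of variables in a matrix $\mathbf{O}$ satisfying $\mathbf{B}\mathbf{O}+\mathbf{E}=\mathbf{O}\mathbf{A}$, invokes the periodic-beliefs lemma to replace $\mathbf{B}$ by a modified $\widetilde{\mathbf{B}}$ whose exact tree solution equals $\mathbf{O}\boldsymbol{\mu}^{(N)}$, and then grows the tree depth $M$ until $\det(\mathbf{O}^T\mathbf{O})\neq 0$ forces $\mathbf{A}\boldsymbol{\mu}^{(N)}=\mathbf{b}$. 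You instead verify $\mathbf{A}\boldsymbol{\mu}=\mathbf{b}$ by a finite per-edge computation: the $i$-th residual collapses to $\sum_{j\in N(i)}\widetilde{\Lambda}_{ji}\bigl(Q_{j\backslash i}-A_{ji}\mu_i-P_{j\backslash i}\mu_j\bigr)$, and each cavity discrepancy vanishes because of the precision identity $P_{j\backslash i}P_i=P_{i\backslash j}P_j$, both sides of which differ from $P_{i\backslash j}P_{j\backslash i}$ by the symmetric quantity $-A_{ij}A_{ji}$. I checked the algebra and it closes; the degenerate cases $A_{ij}=0$ or $A_{ji}=0$ are handled correctly by the vanishing of the corresponding message, and the only implicit hypotheses --- nonvanishing of the cavity precisions $P_{j\backslash i}$ and of $P_i$ --- are already needed for the fixed point and the means in the theorem statement to be defined at all. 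What your argument buys is self-containedness and brevity: no unwrapped trees, no $\mathbf{O}$ bookkeeping, no ``sufficiently large $M$'' step, and the nonsymmetric case requires no separate discussion beyond keeping $A_{ij}$ and $A_{ji}$ distinct. What the paper's route buys is reuse: the computation tree and the copy matrix $\mathbf{O}$ are exactly the objects on which the walk-sum convergence proof (Appendix B) and the generalized-GaBP consistency proof (Appendix C) are built, so obtaining consistency through them costs little once that machinery is in place, and it additionally exhibits the fixed point as exact elimination on an unwrapped tree, which your purely algebraic cancellation does not reveal.
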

The analogous result for symmetric matrices first appeared in \cite{weiss2000correctness}. In \ref{Appendices:Consistency_of_GaBP}, we show how to extend the proof for the nonsymmetric case.

\begin{restatable}{theorem}{GaBPconvergence}\label{prop:GaBP_convergence}
If $A_{ii}\neq 0~\forall i$, $\widetilde{|R|}_{ij} = \left(1 - \delta_{ij}\right)\frac{\left|A_{ij}\right|}{\left|A_{ii}\right|}$, and $\rho(\widetilde{|\mathbf{R}|})<1$, then the \cref{algorithm:non_symmetric_GaBP} converges to the solution $\mathbf{x}^{\star} = \mathbf{A}^{-1}\mathbf{b}$ for any $\mathbf{b}$.
\end{restatable}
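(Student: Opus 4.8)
The plan is to exploit the decoupling, already noted after \eqref{nonsymmetric_GaBP_rules}, between the precision messages $\widetilde{\mathbf{\Lambda}}$ and the mean messages $\mathbf{m}$: the $\widetilde{\Lambda}$-recursion in \eqref{nonsymmetric_GaBP_rules} does not involve $\mathbf{b}$, so I would first settle the convergence of $\widetilde{\mathbf{\Lambda}}$ and only then treat the (now linear) recursion for $\mathbf{m}$. To handle the first part, I would undo the reparametrization \eqref{nonsymmetric_GaBP_messages} and work with $\Lambda_{ji}=\widetilde{\Lambda}_{ji}A_{ji}$. A direct substitution shows that $\Lambda_{ji}^{(n+1)} = -A_{ij}A_{ji}\big/\bigl(A_{jj}+\sum_{k\in N(j)\setminus i}\Lambda_{kj}^{(n)}\bigr)$, i.e. the precision recursion is \emph{formally identical} to the symmetric GaBP rule in \eqref{GaBP_rules} with the product $A_{ij}A_{ji}$ in place of $A_{ij}^2$. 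Since only $|A_{ij}A_{ji}|=|A_{ij}|\,|A_{ji}|$ enters any bound, the walk-sum convergence argument of \cite{malioutov2006walk} becomes available, and the hypothesis $\rho(\widetilde{|\mathbf{R}|})<1$ is exactly the walk-summability condition that it requires.

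I would then re-run that argument with \emph{directed} walks. Writing $R_{ij}=-A_{ij}/A_{ii}$ for $i\neq j$ (so that $|R_{ij}|=\widetilde{|R|}_{ij}$), the key point is that $\rho(\widetilde{|\mathbf{R}|})<1$ is equivalent to the absolute convergence $\sum_{\ell\ge 0}\widetilde{|\mathbf{R}|}^{\ell}<\infty$, which makes every walk-sum over walks in the graph of $\mathbf{A}$ converge absolutely and makes $(\mathbf{I}-\mathbf{R})^{-1}=\sum_{\ell\ge0}\mathbf{R}^{\ell}$ well defined. The messages produced after $n$ steps of \cref{algorithm:non_symmetric_GaBP} are exactly the quantities obtained by Gaussian elimination on the depth-$n$ computation tree rooted at a node (the elimination picture of \cref{Section:Gaussian_belief_propagation.Subsection:GaBP_from_the_elimination_perspective}), and under walk-summability these collect a nested, absolutely summable family of directed walk-sums. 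Monotone control of the partial sums yields a limit $\widetilde{\mathbf{\Lambda}}^{\star}$ together with node precisions $\beta_i^{\star}=A_{ii}+\sum_{j\in N(i)}\widetilde{\Lambda}_{ji}^{\star}A_{ji}$ bounded away from zero, so no denominator in \eqref{nonsymmetric_GaBP_rules} degenerates along the iteration.

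Once $\widetilde{\mathbf{\Lambda}}\to\widetilde{\mathbf{\Lambda}}^{\star}$, the mean update in \eqref{nonsymmetric_GaBP_rules} reduces to an affine fixed-point iteration $\mathbf{m}^{(n+1)}=\mathbf{c}+\mathbf{T}\mathbf{m}^{(n)}$, whose coefficient matrix $\mathbf{T}$ has the entry $\widetilde{\Lambda}_{ji}^{\star}$ routing $m_{kj}$ into $m_{ji}$. I would bound $|\mathbf{T}|$ entrywise by a matrix comparable to $\widetilde{|\mathbf{R}|}$ — using $|\widetilde{\Lambda}_{ji}^{\star}|=|A_{ij}|/|\beta_j^{\star}|$ together with the uniform lower bound on $\beta_j^{\star}$ from the previous step — and invoke walk-summability once more to conclude $\rho(\mathbf{T})<1$. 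Hence $\mathbf{m}^{(n)}$ converges to its unique fixed point for \emph{every} right-hand side $\mathbf{b}$, and so do the node means $\mu_i^{(n)}$. I would then identify the limit: because the whole iteration reaches a fixed point of the message equations, \cref{prop:GaBP_consistency} applies verbatim and forces $\mu_i=\bigl(\mathbf{A}^{-1}\mathbf{b}\bigr)_i$; equivalently the residual monitored in \cref{algorithm:non_symmetric_GaBP}, $\|\mathbf{A}\mathbf{x}-\mathbf{b}\|_\infty$, tends to zero.

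The main obstacle is the second paragraph. The walk-sum bookkeeping in \cite{malioutov2006walk} repeatedly pairs a walk with its reverse, which carries equal weight only when $\mathbf{A}=\mathbf{A}^{T}$. For $\mathbf{A}\neq\mathbf{A}^{T}$ I must keep walks directed and verify that every estimate used is an estimate on $|R_{ij}|$ alone — so that symmetry is never actually invoked and $\rho(\widetilde{|\mathbf{R}|})<1$ remains sufficient. Equivalently, I expect the cleanest route to be to show that the depth-$n$ computation-tree matrices stay H-matrices whose comparison matrices are dominated by $\widetilde{|\mathbf{R}|}$, so that their inverses are uniformly bounded and the root solutions form a Cauchy sequence converging to $x_i^{\star}$.
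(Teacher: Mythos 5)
Your overall strategy --- directed walk-sums, the computation-tree picture, and absolute convergence from $\rho(\widetilde{|\mathbf{R}|})<1$ --- is exactly the route the paper takes in \cref{Appendices:Convergence_of_GaBP}, and your closing paragraph correctly identifies the one place where symmetry must not be invoked. For the precision messages your plan matches the paper: there, $\widetilde{\Lambda}_{ki}$ is identified (equation \eqref{precision_parametrization}) with a rescaled single-revisit walk-sum over the subtree $T_{k\cup i}$, and convergence follows because these are partial sums of an absolutely convergent series. Two small cautions on your second paragraph: ``monotone control of the partial sums'' is an artifact of the symmetric positive-definite case and is not available here (only absolute summability is), and your symmetrization $A_{ij}^2\to A_{ij}A_{ji}$ quietly changes the walk-summability condition to one involving $\sqrt{|A_{ij}A_{ji}|}/|A_{ii}|$ rather than $|A_{ij}|/|A_{ii}|$; the implication from $\rho(\widetilde{|\mathbf{R}|})<1$ does hold, but it needs a Hadamard-product spectral inequality that you do not state, and in any case the paper avoids the detour by keeping all walks directed from the start.

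The genuine gap is your third paragraph. The paper does \emph{not} treat the mean recursion as a separate affine fixed-point iteration with coefficient matrix $\mathbf{T}$ built from the limiting $\widetilde{\Lambda}^{\star}$; it identifies the mean messages themselves with single-visit walk-sums over subtrees (equation \eqref{mean_parametrization}), so their convergence is again inherited directly from absolute walk-summability and the solution formula \eqref{walk_sum_solution}. Your alternative requires two estimates you have not supplied and that do not follow from the hypotheses as stated: (i) a uniform lower bound on the moduli of the denominators $\bigl|A_{jj}+\sum_{k\in N(j)\backslash i}\widetilde{\Lambda}_{kj}^{(n)}A_{kj}\bigr|$ \emph{along the whole iteration}, not merely at the fixed point --- in the nonsymmetric setting the corrections $\widetilde{\Lambda}_{kj}A_{kj}$ carry no sign and can shrink the denominator below $|A_{jj}|$; and (ii) the deduction $\rho(\mathbf{T})<1$ from an entrywise bound $|\mathbf{T}|\le c\,\widetilde{|\mathbf{R}|}$-like comparison, which fails to close unless $c\le 1$, i.e.\ unless the denominators are at least $|A_{jj}|$ in modulus --- false in general. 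Unless you can prove such a denominator bound, you should replace the spectral-radius argument for the means by the walk-sum parametrization, after which \cref{prop:GaBP_consistency} is not even needed to identify the limit: the limiting walk-sum is the Neumann series for $\widetilde{\mathbf{A}}^{-1}\widetilde{\mathbf{b}}$ itself.
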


Sufficient condition for symmetric positive-definite matrices was established in \cite{malioutov2006walk}. \Cref{Appendices:Convergence_of_GaBP} contains the proof with necessary modifications that holds for nonsymmetric matrices.

To make connections with the classical theory of iterative methods, we give another (less general) sufficient condition.
\begin{corollary}\label{prop:M_matrices}
If $\mathbf{A}$ is the $M$-matrix (see \cite[Definition 1.30, Theorem 1.31]{saad2003iterative}), \cref{algorithm:non_symmetric_GaBP} converges to the solution $\mathbf{x}^{\star} = \mathbf{A}^{-1}\mathbf{b}$ for any $\mathbf{b}$.  
\end{corollary}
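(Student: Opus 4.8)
The plan is not to prove convergence from scratch but to reduce the statement to \Cref{prop:GaBP_convergence}: it suffices to check that an $M$-matrix satisfies $A_{ii}\neq 0$ for all $i$ and $\rho(\widetilde{|\mathbf{R}|})<1$. The first condition is immediate, since by \cite[Definition 1.30]{saad2003iterative} an $M$-matrix has $A_{ii}>0$. The whole content of the corollary is therefore the spectral-radius bound, and the key observation is that for an $M$-matrix the matrix $\widetilde{|\mathbf{R}|}$ is not merely comparable to, but literally equal to, the point Jacobi iteration matrix.

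Concretely, write $\mathbf{D}=\mathrm{diag}(A_{11},\dots,A_{NN})$. Because an $M$-matrix has $A_{ii}>0$ and $A_{ij}\le 0$ for $i\neq j$, we have $|A_{ii}|=A_{ii}$ and $|A_{ij}|=-A_{ij}$, so that
\begin{equation}
    \widetilde{|R|}_{ij} = (1-\delta_{ij})\frac{-A_{ij}}{A_{ii}} = \left(\mathbf{I}-\mathbf{D}^{-1}\mathbf{A}\right)_{ij}.
\end{equation}
In other words $\widetilde{|\mathbf{R}|}=\mathbf{J}$, where $\mathbf{J}=\mathbf{I}-\mathbf{D}^{-1}\mathbf{A}$ is the Jacobi matrix; the absolute values in the definition of $\widetilde{|\mathbf{R}|}$ only strip off signs that the $M$-matrix structure has already fixed, so nothing is lost.

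It then remains to show $\rho(\mathbf{J})<1$, which is the textbook statement that point Jacobi converges for $M$-matrices. I would obtain it from the theory of regular splittings. Writing $\mathbf{A}=\mathbf{D}-\mathbf{N}$ with $\mathbf{N}=\mathbf{D}-\mathbf{A}$, one has $\mathbf{J}=\mathbf{D}^{-1}\mathbf{N}$, and this is a regular splitting: $\mathbf{D}^{-1}\ge 0$ because the diagonal is positive, and $\mathbf{N}\ge 0$ because its diagonal vanishes while its off-diagonal entries are $-A_{ij}\ge 0$. Since $\mathbf{A}$ is an $M$-matrix, $\mathbf{A}^{-1}\ge 0$ by \cite[Theorem 1.31]{saad2003iterative}, and the classical convergence theorem for regular splittings yields
\begin{equation}
    \rho\!\left(\widetilde{|\mathbf{R}|}\right)=\rho\!\left(\mathbf{D}^{-1}\mathbf{N}\right)=\frac{\rho\!\left(\mathbf{A}^{-1}\mathbf{N}\right)}{1+\rho\!\left(\mathbf{A}^{-1}\mathbf{N}\right)}<1.
\end{equation}
With this bound in hand, \Cref{prop:GaBP_convergence} applies verbatim and gives convergence of \cref{algorithm:non_symmetric_GaBP} to $\mathbf{x}^{\star}=\mathbf{A}^{-1}\mathbf{b}$ for every $\mathbf{b}$.

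There is no genuinely hard step: once the identity $\widetilde{|\mathbf{R}|}=\mathbf{J}$ is noticed, the corollary is an application of \Cref{prop:GaBP_convergence} combined with a standard fact about $M$-matrices. The only point requiring a little care is citation hygiene --- invoking the regular-splitting characterization (equivalently, the $\mathbf{A}=s\mathbf{I}-\mathbf{B}$ with $s>\rho(\mathbf{B})$ characterization) from \cite[Theorem 1.31]{saad2003iterative} rather than re-deriving the Perron--Frobenius estimate, and checking that the sign conventions used there match the splitting $\mathbf{A}=\mathbf{D}-\mathbf{N}$ above.
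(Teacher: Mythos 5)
Your proof is correct and follows essentially the same route as the paper: both reduce the corollary to \cref{prop:GaBP_convergence} by observing that the $M$-matrix sign pattern ($A_{ii}>0$, $A_{ij}\le 0$) makes $\widetilde{|\mathbf{R}|}$ coincide with the Jacobi matrix $\mathbf{I}-\mathbf{D}^{-1}\mathbf{A}$, whose spectral radius is less than one. The only difference is that you re-derive the Jacobi bound via regular splittings, whereas the paper simply cites it as a known property of $M$-matrices from \cite[Theorem 1.31]{saad2003iterative}.
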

\begin{proof}
For $M$-matrix $\rho(\mathbf{I}-\mathbf{D}^{-1}\mathbf{A})<1$, $A_{ij}\leq 0,~i\neq j$ and $A_{ii}>0$, where $\mathbf{D}$ is a diagonal of $\mathbf{A}$. It means that $\mathbf{\widetilde{R}} = \mathbf{I}-\mathbf{D}^{-1}\mathbf{A} = \left|\mathbf{\widetilde{R}}\right|$ and $\rho\left(\left|\mathbf{\widetilde{R}}\right|\right)<1$.
\end{proof}

\section{Generalized GaBP solvers}\label{Section:Generalized_GaBP_solvers}
\begin{figure}
    \centering
    \subfloat[]{\label{fig:region_graph:a}\begin{tikzpicture}[
sq/.style={circle, draw=black, minimum size=1mm},
]
\node[sq] at (0, 0) (1) {$1$};
\node[sq] at (1, 0) (2) {$2$};
\node[sq] at (2, 0) (3) {$3$};
\node[sq] at (0, -1) (4) {$4$};
\node[sq] at (1, -1) (5) {$5$};
\node[sq] at (2, -1) (6) {$6$};
\node[sq] at (0, -2) (7) {$7$};
\node[sq] at (1, -2) (8) {$8$};
\node[sq] at (2, -2) (9) {$9$};
\draw[-] (1) -- (2);
\draw[-] (1) -- (4);
\draw[-] (2) -- (3);
\draw[-] (2) -- (5);
\draw[-] (3) -- (6);
\draw[-] (4) -- (5);
\draw[-] (4) -- (7);
\draw[-] (5) -- (6);
\draw[-] (5) -- (8);
\draw[-] (6) -- (9);
\draw[-] (7) -- (8);
\draw[-] (8) -- (9);
\end{tikzpicture}}\quad
    \subfloat[]{\label{fig:region_graph:b}\begin{tikzpicture}[
sq/.style={rectangle, draw=black, minimum size=1mm},
]
\node[sq, label=west:+1, very thick] at (-2, 0) (235689) {$235689$};
\node[sq, label=west:+1] at (1, 0) (123456) {$123456$};
\node[sq, label=west:0, fill={rgb:black,1;white,2}] at (-3, -1) (5689) {$5689$};
\node[sq, label=west:-1, very thick] at (-1, -1) (2356) {$2356$};
\node[sq, label=west:+1, very thick] at (1, -1) (4578) {$4578$};
\node[sq, label=west:0, fill={rgb:black,1;white,2}] at (-4.5, -2) (89) {$89$};
\node[sq, label=west:0, fill={rgb:black,1;white,2}] at (-3, -2) (69) {$69$};
\node[sq, label=west:0, fill={rgb:black,1;white,2}] at (-1.5, -2) (56) {$56$};
\node[sq, label=west:-1, fill={rgb:black,1;white,2}] at (0.5, -2) (58) {$58$};
\node[sq, label=west:-1, very thick] at (2.5, -2) (45) {$45$};
\node[sq, label=west:0, fill={rgb:black,1;white,2}] at (-1.5, -3) (6) {$6$};
\node[sq, label=west:+1, fill={rgb:black,1;white,2}] at (0.5, -3) (5) {$5$};
\draw[->] (235689) -- (5689);
\draw[->] (235689) -- (2356);
\draw[->] (123456) -- (2356);
\draw[->] (5689) -- (89);
\draw[->] (5689) -- (69);
\draw[->] (5689) -- (56);
\draw[->] (5689) -- (58);
\draw[->] (2356) -- (56);
\draw[->] (4578) -- (58);
\draw[->] (4578) -- (45);
\draw[->] (123456) -- (45);
\draw[->] (56) -- (6);
\draw[->] (56) -- (5);
\draw[->] (58) -- (5);
\draw[->] (45) -- (5);
\end{tikzpicture}}
    \caption{Pairwise Markov model and valid region graph with counting numbers. Shaded nodes belong to the shadow of $5689$, and nodes with thick borders to the Markov blanket of $5689$. See \cref{Section:Generalized_GaBP_solvers.Subsection:Parent-to-child_algorithm} for details.}
    \label{fig:region_graph}
\end{figure}
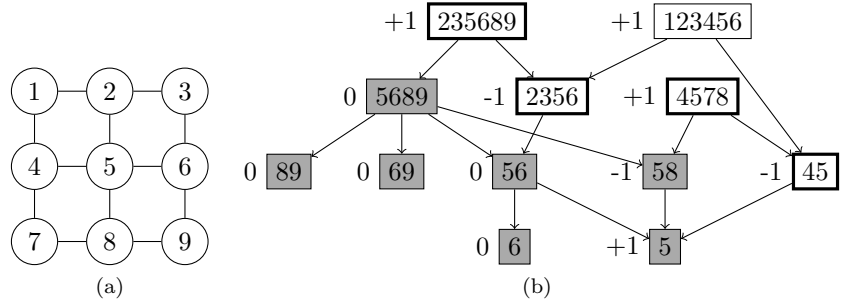
\subsection{Parent-to-child schedule}\label{Section:Generalized_GaBP_solvers.Subsection:Parent-to-child_algorithm}
We present a particular version of the parent-to-child schedule from \cite{yedidia2005constructing} applied to the pairwise Markov graphical models. 

First, we define a region $r$ as a connected subgraph of the original graph $\Gamma$. Each vertex $a_1$ in the region graph is a region that may be connected by a directed edge with another vertex  $a_2$ if $a_2\subset a_1$. The direction of the edge is from a larger region to smaller. If there is a directed edge from $a$ to $b$, then $a$ is a parent of $b$, and $b$ is a child of $a$. If the vertices $a$ and $b$ are connected by a directed path starting from $a$, then $a$ is an ancestor of $b$, and $b$ is a descendent of $a$. The set of all vertices of the factor graph is $\mathcal{R}$, the set of all edges is $\mathcal{E}_{\mathcal{R}}$, the set of all parents, children, ancestors, descendants of $a$ are $P(a)$, $C(a)$, $A(a)$, and $D(a)$, respectively. We supplement each region $r\in\mathcal{R}$ with a counting number,
\begin{equation}\label{counting_definition}
    c_r = 1 - \sum_{i\in A(r)} c_i,
\end{equation}
and require that each vertex $v\in\mathcal{V}$ and each edge $e\in\mathcal{E}$ of the original graph be counted exactly once,
\begin{equation}\label{counting_condition}
    \sum_{r\in\mathcal{R}, v\in r} c_r =  \sum_{r\in\mathcal{R}, e\in r} c_r = 1.
\end{equation}
The definitions of a counting number (\ref{counting_definition}) and  condition (\ref{counting_condition}) are justified in the framework of the cluster variation method \cite{pelizzola2005cluster} (or the Kikuchi approximation \cite{kikuchi1951theory}). Equation (\ref{counting_condition}) is a result of the Möbius inversion formula applied to the sum over partially ordered sets \cite{an1988note}, and (\ref{counting_condition}) can be proven using definitions of Möbius and Zeta functions \cite[equation 16]{an1988note}.

A sample region graph is shown in \cref{fig:region_graph}. For example, by region $5689$, we mean all nodes and all links between them that are present on the original graph. It is easy to see that the counting number condition (\ref{counting_condition}) is satisfied for all the links and nodes.

The last two definitions that we need are the shadow of the region $S(r) = D(r)\cup r$ and a Markov blanket of the region $B(r) = P\left(S(r)\right)\backslash S(r)$. An example of both the shadow and Markov blanket of region $5689$ can be found in \cref{fig:region_graph:b}. 

A parent-to-child algorithm consists of three elements: (i) messages that propagate along the directed edges of the region graph
\begin{equation}\label{generalized_message}
    \mathbf{m}_{a\rightarrow b}(\mathbf{x}_b),
\end{equation}
where $x_b$ corresponds to variables belong to the cluster $b$, (ii) a formula for the cluster beliefs
\begin{equation}\label{generalized_marginals}
    \mathbf{b}_r(\mathbf{x}_r)\sim \prod_{i\in \mathcal{V}_r} \phi_i(x_i) \prod_{(ij)\in \mathcal{E}_r} \psi_{ij}(x_i, x_j) \prod_{a\in B(r),~b\in S(r)} \mathbf{m}_{a\rightarrow b}(\mathbf{x}_b), 
\end{equation}
and (iii) message update rules that follow from the consistency conditions
\begin{equation}\label{consistency}
    \forall l, r\in \mathcal{R},~l\subset r \Rightarrow \sum_{x_r\backslash x_l} \mathbf{b}_{r}(\mathbf{x}_r) = \mathbf{b}_{l}(\mathbf{x}_l).
\end{equation}

In \cite[equation 114]{yedidia2005constructing}, one can find message update rules for the general situation, but for our simple region graph, conditions (\ref{consistency}) suffice.

\subsection{Two-layer generalized GaBP}\label{Section:Generalized_GaBP_solvers.Subsection:Generalized_two_layers_GaBP}
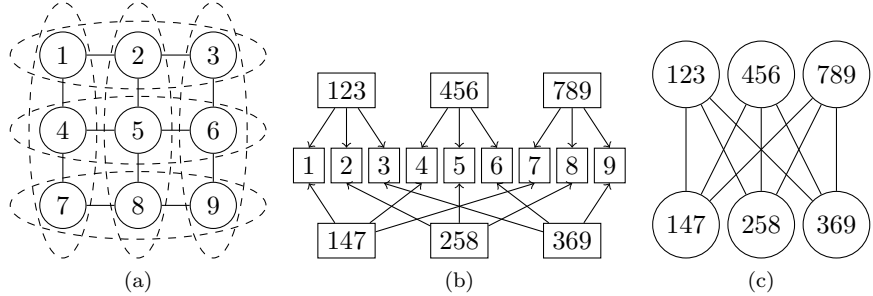
\begin{figure}
    \centering
    \subfloat[]{\label{fig:two_layers_network:a}\begin{tikzpicture}[
sq/.style={circle, draw=black, minimum size=1mm},
]
\node[sq] at (0, 0) (1) {$1$};
\node[sq] at (1, 0) (2) {$2$};
\node[sq] at (2, 0) (3) {$3$};
\node[sq] at (0, -1) (4) {$4$};
\node[sq] at (1, -1) (5) {$5$};
\node[sq] at (2, -1) (6) {$6$};
\node[sq] at (0, -2) (7) {$7$};
\node[sq] at (1, -2) (8) {$8$};
\node[sq] at (2, -2) (9) {$9$};
\draw[-] (1) -- (2);
\draw[-] (1) -- (4);
\draw[-] (2) -- (3);
\draw[-] (2) -- (5);
\draw[-] (3) -- (6);
\draw[-] (4) -- (5);
\draw[-] (4) -- (7);
\draw[-] (5) -- (6);
\draw[-] (5) -- (8);
\draw[-] (6) -- (9);
\draw[-] (7) -- (8);
\draw[-] (8) -- (9);
\draw[dashed] (0,-1) circle [x radius=0.45cm, y radius=1.7cm];
\draw[dashed] (1,-1) circle [x radius=0.45cm, y radius=1.7cm];
\draw[dashed] (2,-1) circle [x radius=0.45cm, y radius=1.7cm];
\draw[dashed] (1,0) circle [x radius=1.7cm, y radius=0.45cm];
\draw[dashed] (1,-1) circle [x radius=1.7cm, y radius=0.45cm];
\draw[dashed] (1,-2) circle [x radius=1.7cm, y radius=0.45cm];
\end{tikzpicture}}\quad
    \subfloat[]{\label{fig:two_layers_network:b}\begin{tikzpicture}[
sq/.style={rectangle, draw=black, minimum size=1mm},
]
\node[sq] at (-0.5, 0) (H1) {$123$};
\node[sq] at (1, 0) (H2) {$456$};
\node[sq] at (2.5, 0) (H3) {$789$};
\node[sq] at (-1, -1) (1) {$1$};
\node[sq] at (-0.5, -1) (2) {$2$};
\node[sq] at (0, -1) (3) {$3$};
\node[sq] at (0.5, -1) (4) {$4$};
\node[sq] at (1, -1) (5) {$5$};
\node[sq] at (1.5, -1) (6) {$6$};
\node[sq] at (2, -1) (7) {$7$};
\node[sq] at (2.5, -1) (8) {$8$};
\node[sq] at (3, -1) (9) {$9$};
\node[sq] at (-0.5, -2) (V1) {$147$};
\node[sq] at (1, -2) (V2) {$258$};
\node[sq] at (2.5, -2) (V3) {$369$};
\draw[->] (H1) -- (1.north);
\draw[->] (H1) -- (2.north);
\draw[->] (H1) -- (3.north);
\draw[->] (H2) -- (4.north);
\draw[->] (H2) -- (5.north);
\draw[->] (H2) -- (6.north);
\draw[->] (H3) -- (7.north);
\draw[->] (H3) -- (8.north);
\draw[->] (H3) -- (9.north);
\draw[->] (V1) -- (1.south);
\draw[->] (V1) -- (4.south);
\draw[->] (V1) -- (7.south);
\draw[->] (V2) -- (2.south);
\draw[->] (V2) -- (5.south);
\draw[->] (V2) -- (8.south);
\draw[->] (V3) -- (3.south);
\draw[->] (V3) -- (6.south);
\draw[->] (V3) -- (9.south);
\end{tikzpicture}}\quad
    \subfloat[]{\label{fig:two_layers_network:c}\begin{tikzpicture}[
sq/.style={circle, draw=black, minimum size=1mm},
]
\node[sq] at (0, 0) (H1) {$123$};
\node[sq] at (1, 0) (H2) {$456$};
\node[sq] at (2, 0) (H3) {$789$};
\node[sq] at (0, -2) (V1) {$147$};
\node[sq] at (1, -2) (V2) {$258$};
\node[sq] at (2, -2) (V3) {$369$};
\draw[-] (H1) -- (V1);
\draw[-] (H1) -- (V2);
\draw[-] (H1) -- (V3);
\draw[-] (H2) -- (V1);
\draw[-] (H2) -- (V2);
\draw[-] (H2) -- (V3);
\draw[-] (H3) -- (V1);
\draw[-] (H3) -- (V2);
\draw[-] (H3) -- (V3);
\end{tikzpicture}}
    \caption{An example of a region graph. (a) - the original graph partitioning, (b) - the region graph, and (c) - the hypergraph structure. In \cref{Section:Numerical_examples}, this partitioning is referred to as ``line GaBP''.}
    \label{fig:two_layers_network}
\end{figure}
In this section, we consider the simplest possible valid region graph that consists of two layers, \cref{fig:two_layers_network:b}. The large regions, the horizontal and vertical stripes in \ref{fig:two_layers_network:a}, are parents of small regions presented by the individual nodes. To proceed, we need to establish some further notation. First, we define a projector on the region $k$,
\begin{equation}\label{projector}
    \left(\Pi_{k}\right)_{ij}=
    \begin{cases}
        \delta_{ij},~i, j \in k,\\
        0,~\text{otherwise}.\\
    \end{cases}
\end{equation}
Here, $|j| = |k|$ and $|i|>|k|$ is chosen to be conformable depending on the context. Ordering is global, i.e., it is fixed for the whole graph and maintained the same way in all manipulations. We also introduce brackets,
\begin{subequations}
    \begin{align}
        &\mathbf{\Pi}_k^T \mathbf{A} \mathbf{\Pi}_k = \left[\mathbf{A}\right]_k, \mathbf{\Pi}_k^T \mathbf{b} = \left[\mathbf{b}\right]_{k} \label{fan_in};\\
        &\mathbf{\Pi}_k \mathbf{C} \mathbf{\Pi}_k^T = \left]\mathbf{C}\right[_k, \mathbf{\Pi}_k \mathbf{b} = \left]\mathbf{b}\right[_{k} \label{fan_off},
    \end{align}
\end{subequations}
where $\mathbf{C}\in\mathbb{R}^{|k|\times |k|}$ and according to our notation, the sizes of the matrix and the vector (\ref{fan_off}) depend of the context whereas in (\ref{fan_in}) the size of the matrix is $|k|\times |k|$ and the size of the vector is $|k|$.

Let $\left\{L_i\right\}$ and $\left\{l_i\right\}$ be sets of large and small regions in the two-layer region graph. For messages, we use the following parameterizations
\begin{equation}\label{message_parametrization}
    m_{ab}(\mathbf{x}_b)=\mathcal{N}\left(\mathbf{x}_b\left|\boldsymbol{\mu}_{ab}, \mathbf{\Lambda^{-1}}_{ab}\right.\right) = \mathcal{N}\left(\mathbf{x}_b\left|\boldsymbol{\mu}_{ab}, \mathbf{\Sigma}_{ab}\right.\right)
\end{equation}
According to (\ref{generalized_marginals}), the belief of any region reads
\begin{subequations}
    \begin{align}
        &\mathbf{b}_{L}(\mathbf{x}_L) = \mathcal{N}\left(\mathbf{x}_{L}\left|\boldsymbol{\Sigma}_{L}\mathbf{m}_{L}, \boldsymbol{\Sigma}_{L}\right.\right),\\
        &\mathbf{m}_{L} = \left[\mathbf{b}\right]_{L} + \sum\limits_{\substack{a\in\mathcal{B}(L)\\b\in\mathcal{S}(L)}}\left]\boldsymbol{\Lambda}_{ab}\boldsymbol{\mu}_{ab}\right[_{b},\\
        &\boldsymbol{\Sigma}_{L} = \left(\left[\mathbf{A}\right]_L + \sum\limits_{\substack{a\in\mathcal{B}(L)\\b\in\mathcal{S}(L)}}\left]\boldsymbol{\Lambda}_{ab}\right[_{b}\right)^{-1}.
    \end{align}
\end{subequations}
For the small region $l$, the shadow is $\mathcal{S}(l) = l$ and the Markov blanket is $\mathcal{B}(l) = P(l)$, whereas for the large region $L$, the shadow is $\mathcal{S}(L) = C(L)$ and the Markov blanket is $\mathcal{B}(L)=P(C(L))\backslash L$. Consistency condition (\ref{consistency}) allows one to derive update rules for each message sent from the parent region to the child region,
\begin{subequations}\label{region_graph_messages}
    \begin{align}
        &\mathbf{m}_{Ll}(x_{l}) = \int \frac{\mathbf{dx}_L}{\mathbf{dx}_l} \mathcal{N}\left(\mathbf{x}_{l}\left|\widetilde{\boldsymbol{\Sigma}}\widetilde{\boldsymbol{b}}, \widetilde{\boldsymbol{\Sigma}}\right.\right),\\
        &\widetilde{\mathbf{b}} = \left[\mathbf{b}\right]_{L} - \left]\left[\mathbf{b}\right]_{l}\right[_{l} + \sum\limits_{\substack{a\in\mathcal{B}(L)\\b\in\mathcal{S}(L)\backslash l}}\left]\boldsymbol{\Lambda}_{ab}\boldsymbol{\mu}_{ab}\right[_{b},\\
        &\widetilde{\boldsymbol{\Sigma}} = \left(\left[\mathbf{A}\right]_L - \left]\left[\mathbf{A}\right]_{l}\right[_{l} + \sum\limits_{\substack{a\in\mathcal{B}(L)\\b\in\mathcal{S}(L)\backslash l}}\left]\boldsymbol{\Lambda}_{ab}\right[_{b}\right)^{-1}. \label{modified_covariance}
    \end{align}
\end{subequations}
Using standard results for marginals of the normal distribution, we can deduce
\begin{equation}\label{rough_message_update}
    \boldsymbol{\Lambda}_{Ll} = \left(\left[\widetilde{\boldsymbol{\Sigma}}\right]_{l}\right)^{-1},~\boldsymbol{\mu}_{Ll} = \left[\widetilde{\boldsymbol{\Sigma}}\widetilde{\boldsymbol{b}}\right]_{l}.
\end{equation}
If region $L$ has many children and $|l|\ll |L|$, the direct use of equation (\ref{modified_covariance}) is not efficient. Instead, we use the formula following from the Woodbury matrix identity,
\begin{subequations}
    \begin{align}
        &\left[\left(\mathbf{A} + \left]\left[\mathbf{B}\right]_{l}\right[_{l}\right)^{-1}\right]_{l} = \left(\left(\left[\mathbf{A}^{-1}\right]_{l}\right)^{-1} + \left[\mathbf{B}\right]_{l}\right)^{-1},\\
        \begin{split}
            &\left[\left(\mathbf{A} + \left]\left[\mathbf{B}\right]_{l}\right[_{l}\right)^{-1}\left(\mathbf{b} + \left]\left[\mathbf{c}\right]_l\right[_l\right)\right]_{l} = \\ 
            &= \left(\left(\left[\mathbf{A}^{-1}\right]_{l}\right)^{-1} + \left[\mathbf{B}\right]_{l}\right)^{-1} \left(\left(\left[\mathbf{A}^{-1}\right]_{l}\right)^{-1} \left[\mathbf{A}^{-1}\mathbf{b}\right]_l + \left[\mathbf{c}\right]_l\right),
        \end{split}
    \end{align}
\end{subequations}
and split for each child region $l$,
\begin{subequations}
    \begin{align}
        &\widetilde{\mathbf{b}} = \left(\left[\mathbf{b}\right]_{L} + \sum\limits_{\substack{a\in\mathcal{B}(L)\\b\in\mathcal{S}(L)}}\left]\boldsymbol{\Lambda}_{ab}\boldsymbol{\mu}_{ab}\right[_{b}\right) - \left(\left]\left[\mathbf{b}\right]_{l}\right[_{l} + \sum\limits_{a\in P(l)\backslash L}\left]\boldsymbol{\Lambda}_{al}\boldsymbol{\mu}_{al}\right[_{l}\right) = \widetilde{\mathbf{b}}_0 - \widetilde{\mathbf{b}}_l,\\
        &\widetilde{\boldsymbol{\Sigma}}^{-1} = \left(\left[\mathbf{A}\right]_L + \sum\limits_{\substack{a\in\mathcal{B}(L)\\b\in\mathcal{S}(L)}}\left]\boldsymbol{\Lambda}_{ab}\right[_{b}\right) - \left(\left]\left[\mathbf{A}\right]_{l}\right[_{l} + \sum\limits_{a\in P(l)\backslash L}\left]\boldsymbol{\Lambda}_{al}\right[_{l}\right) = \widetilde{\boldsymbol{\Lambda}}_0 - \widetilde{\boldsymbol{\Lambda}}_l.
    \end{align}
\end{subequations}
Then, for precision and mean of messages, we obtain
\begin{subequations}\label{Two_layers_GaBP_rules}
    \begin{align}
        &\boldsymbol{\Lambda}_{Ll} = \left(\left[\widetilde{\boldsymbol{\Lambda}}_0^{-1}\right]_{l}\right)^{-1} - \widetilde{\boldsymbol{\Lambda}}_l,\\
        &\boldsymbol{\mu}_{Ll} = \boldsymbol{\Lambda}_{Ll}^{-1}\left(\left(\left[\widetilde{\boldsymbol{\Lambda}}_0^{-1}\right]_{l}\right)^{-1}\left[\widetilde{\boldsymbol{\Lambda}}_0^{-1} \widetilde{\mathbf{b}}_0\right]_l - \widetilde{\mathbf{b}}_l\right).
    \end{align}
\end{subequations}
Equations (\ref{Two_layers_GaBP_rules}) are especially useful in the situation when the small regions consist of the single node, i.e. the case of graph in \cref{fig:two_layers_network}. In this situation, one needs to invert the matrix corresponding to the large region only once whereas the direct application of (\ref{rough_message_update}) leads to  $|C(a)|$ inversions.

From formulae (\ref{Two_layers_GaBP_rules}), one can derive GaBP rules. As the large regions in the Bethe approximation consist of two vertices with the edge connecting them, we can rewrite messages as
\begin{equation}
    \Lambda_{(ij)i}\equiv \Lambda_{ji},~\mu_{(ij)i}\equiv \mu_{ji}.
\end{equation}
Then, the Gaussian belief propagation rules follow from 
\begin{subequations}
    \begin{align}
        &\widetilde{\mathbf{\Lambda}}_0 = 
        \begin{pmatrix}
            A_{ii} + \sum\limits_{k\in N(i)\backslash j} \Lambda_{ki}&A_{ij}\\
            A_{ji}&A_{jj} + \sum\limits_{k\in N(j)\backslash i} \Lambda_{kj}\\
        \end{pmatrix},\\
        &\widetilde{\Lambda}_i = A_{ii} + \sum\limits_{k\in N(i)\backslash j} \Lambda_{ki},\\
        &\left]\widetilde{\Lambda}_i\right[_i = 
        \begin{pmatrix}
            A_{ii} + \sum\limits_{k\in N(i)\backslash j} \Lambda_{ki}&0\\
            0&0\\
        \end{pmatrix}.
    \end{align}
\end{subequations}

The validity of the presented rules for nonsymmetric linear problems does not follow from the derivation above. Nevertheless, one  can apply the generalized GaBP rules with no modifications to solve them, as we explain in the next sections.

\subsection{Elimination perspective}\label{Section:Generalized_GaBP_solvers.Subsection:Elimination_perspective}
First, we define a hypergraph $\mathcal{G}$ based on the region graph. The set of nodes coincides with the set of large regions, and each common child corresponds to the edge in the graph. The example of such a hypergraph is shown in \cref{fig:two_layers_network:c}. With this definition, messages from equations (\ref{region_graph_messages}) can be redefined with no reference to the small region as long as a one-to-one correspondence between edges of $\mathcal{G}$ and child regions of the original region graph are established. Now we study the single message from  region $j$ to  region $i$ with $k = j\cap i$ and $\overline{j} = j\backslash k$. According to (\ref{region_graph_messages}) and (\ref{rough_message_update}), the precision part of the message is 
\begin{equation}
    \boldsymbol{\Lambda}_{ji} = \left(\left(\begin{pmatrix}
        \mathbf{A}_{\overline{j}\overline{j}} & \mathbf{A}_{\overline{j}k}\\
        \mathbf{A}_{k\overline{j}} & 0\\
    \end{pmatrix}^{-1}\right)_{kk}\right)^{-1} = - \mathbf{A}_{k\overline{j}}\mathbf{A}_{\overline{j}\overline{j}}^{-1} \mathbf{A}_{\overline{j}k},
\end{equation}
and the mean is
\begin{equation}
    \boldsymbol{\mu}_{ji} = \left(\begin{pmatrix}
        \mathbf{A}_{\overline{j}\overline{j}} & \mathbf{A}_{\overline{j}k}\\
        \mathbf{A}_{k\overline{j}} & 0\\
    \end{pmatrix}^{-1}
    \begin{pmatrix}
        \mathbf{b}_{\overline{j}}\\
        0\\
    \end{pmatrix}\right)_k = -\boldsymbol{\Lambda}_{ji}^{-1} \mathbf{A}_{k\overline{j}}\mathbf{A}_{\overline{j}\overline{j}}^{-1} \mathbf{b}_{\overline{j}}.
\end{equation}
So the subset $k$ of $\mathbf{A}_{i}$ and $\mathbf{b}_{i}$ receive a correction from region $j$,
\begin{subequations}\label{single_region_corrections}
    \begin{align}
        &\Delta \left(\mathbf{A}_i\right)_{kk} = -\mathbf{A}_{k\overline{j}}\mathbf{A}_{\overline{j} \overline{j}}^{-1} \mathbf{A}_{\overline{j}k},\\
        &\Delta \left(\mathbf{b}_i\right)_{k} = - \mathbf{A}_{k\overline{j}}\mathbf{A}_{\overline{j}\overline{j}}^{-1} \mathbf{b}_{\overline{j}}.
    \end{align}
\end{subequations}
The corrections above are the same as in the ordinary block LU decomposition,
\begin{equation}\label{LU}
    \begin{pmatrix}
        \mathbf{A}_{\overline{j}\overline{j}} & \mathbf{A}_{\overline{j}k}\\
        \mathbf{A}_{k\overline{j}} & \mathbf{A}_{kk}\\
    \end{pmatrix} = 
    \begin{pmatrix}
        \mathbf{I} & 0\\
        \mathbf{A}_{k\overline{j}}\mathbf{A}_{\overline{j}\overline{j}}^{-1} & \mathbf{I}\\
    \end{pmatrix}
    \begin{pmatrix}
        \mathbf{A}_{\overline{j}\overline{j}} & \mathbf{A}_{\overline{j} k}\\
        0 & \mathbf{A}_{kk} - \mathbf{A}_{k\overline{j}}\mathbf{A}_{\overline{j} \overline{j}}^{-1} \mathbf{A}_{\overline{j}k}.\\
    \end{pmatrix}
\end{equation}
Thus, we have a correspondence between the block LU operations and the generalized GaBP for the two-layer region graph. As in the case of the regular GaBP, LU applies in the fashion of dynamic programming, i.e., one does not just solve a smaller subproblem as in the block iterative scheme, but rather forms a recursive procedure that decouples different blocks from each other.
Again, one should reparametrize messages for them to be valid for the arbitrary invertible matrix $\mathbf{A}$. Note that if $\mathbf{A}_{jk} = 0$, then $\mathbf{\Lambda}_{ji}$ is not invertible. However, this is not a problem because in this case $\mathbf{A}_i$ does not receive corrections $\Delta \left(\mathbf{\mathbf{A}}_i\right)_{kk}$, and the reparametrization for the mean messages $\mathbf{m}_{Ll} \equiv \mathbf{\Lambda}_{Ll}\boldsymbol{\mu}_{Ll}$ resolves the issue.

\subsection{The algorithm}\label{Section:Generalized_GaBP_solvers.Subsection:The_algorithm}
In the case of generalized GaBP, messages propagate on the region graph. To remind, $\left\{L_i\right\}$ is the set of large regions, and $\left\{l_i\right\}$ is the set of small regions.
\begin{algorithm}[t]
    \caption{Generalized two-layer GaBP for a nonsymmetric linear system.}
    \label{algorithm:non_symmetric_two_layers_GaBP}
    \begin{algorithmic}
        \STATE For a given two-layer region graph $G = \left\{\mathcal{R}, \mathcal{E}_{\mathcal{R}}\right\}$.
        \WHILE{$\text{error}>\text{tolerance}$} 
            \FOR{$L \in \left\{L_i\right\}\subset \mathcal{R}$}
                \STATE $\widetilde{\mathbf{b}}_0 = \left[\mathbf{b}\right]_L$
                \STATE $\widetilde{\mathbf{\Lambda}}_0 = \left[\mathbf{A}\right]_{L}$
                \FOR{$l \in C(L)$}
                    \STATE $\widetilde{\mathbf{b}}_0 \leftarrow \widetilde{\mathbf{b}}_0 + \sum\limits_{L^{'}\in P(C(l))\backslash L}\left]\mathbf{m}_{L^{'}l}\right[_{l}$
                    \STATE $\widetilde{\mathbf{\Lambda}}_0 \leftarrow \widetilde{\mathbf{\Lambda}}_0 + \sum\limits_{L^{'}\in P(C(l))\backslash L}\left]\mathbf{\Lambda}_{L^{'}l}\right[_{l}$
                \ENDFOR
                \STATE $\left[\mathbf{x}\right]_{L} \leftarrow \widetilde{\mathbf{\Lambda}}_0^{-1} \widetilde{\mathbf{b}}_0$
                \FOR{$l \in C(L)$}
                    \STATE $\mathbf{m}_{Ll} \leftarrow \left(\left[\widetilde{\mathbf{\Lambda}}_0^{-1}\right]_{l}\right)^{-1}\left[\mathbf{x}\right]_l - \left]\left[\mathbf{b}\right]_l\right[_l - \sum\limits_{L^{'}\in P(C(l))\backslash L}\mathbf{m}_{L^{'}l}$
                    \STATE $\mathbf{\Lambda}_{Ll}\leftarrow \left(\left[\widetilde{\mathbf{\Lambda}}_0^{-1}\right]_{l}\right)^{-1} - \left]\left[\mathbf{A}\right]_l\right[_l -\sum\limits_{L^{'}\in P(C(l))\backslash L}\mathbf{\Lambda}_{L^{'}l}$
                \ENDFOR
            \ENDFOR
            \STATE $\text{error} = \left\|\mathbf{A}\mathbf{x}-\mathbf{b}\right\|_{\infty}$
        \ENDWHILE
    \end{algorithmic}
\end{algorithm}The algorithm, as we describe it in this section, acts on a given region graph. We do not provide an algorithm or recommendations on how to build a region graph. Some observations about the influence of a particular choice of the large regions can be found in \cref{Section:Numerical_examples}. Regarding complexity, for each region, one needs to solve a linear system of $\left|L\right|$ equations, and also find a submatrix of the inverse matrix of size $\left|L\right|\times \left|L\right|$. We do not specify how to do it. But generally, the former task is hard to accomplish asymptotically faster than the whole inverse, so the computational cost of the entire scheme depends mostly on this operation. For this reason, it what follows we mostly consider region graphs with small regions each of which contains a single node. In this situation, one can avoid fill-in and estimate only the diagonal of the inverse matrix which is usually more straightforward. Also, there is no need for an additional inverse step during the message update. Moreover, when all small regions are single nodes, the whole algorithm is either a method that speeds up GaBP or a particular schedule of GaBP depending on the chosen way of finding the inverse and the solution of a linear system.

To perform a worst-case analysis both of \cref{algorithm:non_symmetric_two_layers_GaBP} and update rules (\ref{Two_layers_GaBP_rules}), for a given region with $N$ variables, we denote the number of children by $M$, the number of variables of each child region by $n_i$, the number of parents for each child by $p_i$, and use the LU decomposition to find an inverse. For the \cref{algorithm:non_symmetric_two_layers_GaBP}, the number of operations is
\begin{equation}
    \#_1 =\sum_{i=1}^{M}\left[ \frac{3}{2} n_i^3 + 2n_i \left(n_i + 1\right)\left(p_i - 1\right)\right] + \frac{3}{2}N^3.
\end{equation}
The first term in brackets is due to the inverse during the message update stage, the second term in brackets is due to the  message update and message accumulation steps, the last term is from the inverse of a matrix for the large region.
For update rules (\ref{Two_layers_GaBP_rules}), we obtain
\begin{equation}\label{computational_gain}
    \#_2 = \sum_{i=1}^{M}\left[ \frac{3}{2} n_i^3 + (M-1)n_i \left(n_i + 1\right)\left(p_i - 1\right)\right] + M\frac{3}{2}N^3.
\end{equation}
Since
\begin{equation}
    \#_2 - \#_1 = (M-3)\sum_{i=1}^{M}n_i \left(n_i + 1\right)\left(p_i - 1\right) + (M-1)\frac{3}{2}N^3,
\end{equation}
one obtains a speed-up if $M>3$ for an arbitrary region graph. For certain regular partitions, for example the one in \cref{fig:two_layers_network:a}, $M$ scales like $N$, and in this cases, the \cref{algorithm:non_symmetric_two_layers_GaBP} performs $O(N^3)$ operations whereas rules (\ref{Two_layers_GaBP_rules}) perform $O(N^4)$ operations for each large region.

It is easy to see that if the LU method is employed, the number of operations for the single sweep is $O(K)$, where $K$ is an overall number of variables, only if the number of variables, for some regular partition for which the limit makes sense, in each large region scales like $O(1)$. Clearly, LU is not the best option for all cases, for example, matrices can have a particular structure (tridiagonal as \cref{fig:two_layers_network:a}), or it may be more advantageous to use probing or other techniques of estimation of certain subblocks of the inverse matrix \cite{tang2012probing} in combination with some iterative scheme for the solution of linear system.

The \cref{algorithm:non_symmetric_two_layers_GaBP} can be  justified theoretically on the basis of the following two theorems.

\begin{restatable}{theorem}{generalizedGaBPconsistency}
\label{prop:generalized_GaBP_Consistency}
If there is $N\in\mathbb{N}$ such that $\mathbf{m}^{(N+k)}_{e} = \mathbf{m}^{(N)}_{e}$, $\mathbf{\Lambda}^{(N+k)}_{e} = \mathbf{\Lambda}^{(N)}_{e}$ for all $e\in\mathcal{E}_{\mathcal{R}}$ and for any $k\in\mathbb{N}$, then for each large region $\left[\mathbf{x}\right]_{L} \equiv \widetilde{\mathbf{\Lambda}}_0^{-1}\widetilde{\mathbf{b}}_0 = \left[\mathbf{A}^{-1}\mathbf{b}\right]_{L}$ (see \cref{algorithm:non_symmetric_two_layers_GaBP} for details).
\end{restatable}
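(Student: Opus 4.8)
The plan is to establish consistency by showing that at a fixed point, the quantity $\widetilde{\boldsymbol{\Lambda}}_0^{-1}\widetilde{\mathbf{b}}_0$ computed for each large region $L$ solves the correct restricted linear system. The key observation, borrowed from the elimination perspective of \cref{Section:Generalized_GaBP_solvers.Subsection:Elimination_perspective}, is that each incoming message $\mathbf{m}_{L'l}$ and $\boldsymbol{\Lambda}_{L'l}$ represents exactly the Schur-complement correction obtained by eliminating the variables outside $L$ (specifically, the variables $\overline{L'} = L'\setminus l$) from the equations associated with the child region $l$. At a fixed point these corrections are mutually consistent, so assembling them into $\widetilde{\boldsymbol{\Lambda}}_0$ and $\widetilde{\mathbf{b}}_0$ should reproduce precisely the block obtained by eliminating all variables outside $L$ from the full system $\mathbf{A}\mathbf{x}=\mathbf{b}$.

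First I would fix a large region $L$ and, using the message update rules (\ref{Two_layers_GaBP_rules}) together with the corrections (\ref{single_region_corrections}), write $\widetilde{\boldsymbol{\Lambda}}_0$ and $\widetilde{\mathbf{b}}_0$ explicitly as $\left[\mathbf{A}\right]_L$ plus a sum of the eliminated-variable corrections coming from every parent region that shares a child with $L$. The goal is to recognize this sum as the Schur complement of the off-$L$ block of a suitably reordered $\mathbf{A}$. Concretely, partition the variables into those in $L$ and those outside; the Schur complement reads $\left[\mathbf{A}\right]_L - \mathbf{A}_{L,\overline{L}}\mathbf{A}_{\overline{L}\,\overline{L}}^{-1}\mathbf{A}_{\overline{L},L}$, and by the same marginalization argument used for GaBP in \cref{prop:GaBP_consistency}, this Schur complement is exactly the precision matrix of the marginal of the Gaussian (\ref{Gauss_Markov_model}) onto $\mathbf{x}_L$. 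The mean of that marginal is $\left[\mathbf{A}^{-1}\mathbf{b}\right]_L$, which is the claimed identity.

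The main work — and the principal obstacle — is to prove that the accumulated messages at a fixed point actually equal the off-diagonal Schur correction, rather than merely some consistent stationary values. The fixed-point hypothesis $\mathbf{m}^{(N+k)}_e = \mathbf{m}^{(N)}_e$ guarantees stationarity but not a priori correctness, so I would argue by an unrolling/telescoping argument analogous to the tree-elimination equivalence cited after (\ref{toy_update_rules}): because each message is defined recursively from messages on strictly smaller eliminated sets (the Woodbury-based split into $\widetilde{\boldsymbol{\Lambda}}_0$ and $\widetilde{\boldsymbol{\Lambda}}_l$ in (\ref{Two_layers_GaBP_rules}) removes the self-contribution), stationary messages must coincide with the values obtained by finite elimination along any consistent ordering. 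The delicate point is that loops in the region graph mean the elimination order is not unique; I would handle this exactly as in the scalar case, where observations (1) and (2) of \cref{Section:Gaussian_belief_propagation.Subsection:GaBP_from_the_elimination_perspective} show the shared corrections are order-independent, lifted here to block corrections via the identity (\ref{LU}).

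Finally I would verify the mean equation by applying the same reasoning to $\widetilde{\mathbf{b}}_0$: the stationary mean-messages assemble into the Schur-complemented right-hand side $\left[\mathbf{b}\right]_L - \mathbf{A}_{L,\overline{L}}\mathbf{A}_{\overline{L}\,\overline{L}}^{-1}\mathbf{b}_{\overline{L}}$, whence $\widetilde{\boldsymbol{\Lambda}}_0^{-1}\widetilde{\mathbf{b}}_0 = \left[\mathbf{A}^{-1}\mathbf{b}\right]_L$ follows from the standard block-inverse formula. The nonsymmetric case requires no extra effort beyond the reparametrization $\mathbf{m}_{Ll} \equiv \boldsymbol{\Lambda}_{Ll}\boldsymbol{\mu}_{Ll}$ already introduced in \cref{Section:Generalized_GaBP_solvers.Subsection:Elimination_perspective}, since the elimination identities (\ref{single_region_corrections}) and (\ref{LU}) hold verbatim for $\mathbf{A}^T\neq\mathbf{A}$; I expect this to mirror how \cref{prop:GaBP_consistency} extends the symmetric result of \cite{weiss2000correctness}.
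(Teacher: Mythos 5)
Your proposal has a genuine gap at its central step. You claim that at a fixed point the accumulated messages assemble $\widetilde{\boldsymbol{\Lambda}}_0$ and $\widetilde{\mathbf{b}}_0$ into exactly the Schur complement $\left[\mathbf{A}\right]_L - \mathbf{A}_{L\overline{L}}\mathbf{A}_{\overline{L}\,\overline{L}}^{-1}\mathbf{A}_{\overline{L}L}$ and the correspondingly eliminated right-hand side. On a region graph with loops this is false: if it were true, the fixed-point precision (hence the belief covariance) of every region would be exact, whereas it is a basic fact about loopy (Ga)BP --- restated in \cref{Section:Gaussian_belief_propagation.Subsection:Conventional_belief_propagation} --- that converged means are exact but variances generally are not. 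The reason your unrolling argument cannot be completed as described is that on a loopy flat region graph (e.g., the complete bipartite structure of \cref{fig:two_layers_network:c}) the message recursion is not well-founded: a message can depend on itself through a cycle, so there is no ``finite elimination along a consistent ordering'' that reproduces the stationary messages. Observations (1) and (2) of \cref{Section:Gaussian_belief_propagation.Subsection:GaBP_from_the_elimination_perspective}, which you invoke to handle order-dependence, are statements about trees and do not transfer to this setting.

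What your telescoping actually produces is elimination on a computation tree (an unwrapped graph) in which variables are duplicated, and this is exactly the route the paper takes in \ref{Appendices:Consistency_of_generalized_GaBP}: one builds a flat region graph, a computation tree over it (merging overlapping variables when adding leaves), and a copy matrix $\mathbf{O}$ satisfying $\mathbf{B}\mathbf{O} + \mathbf{E} = \mathbf{O}\mathbf{A}$ with $\mathbf{E}$ supported only at the deepest leaves. The stationarity hypothesis is then used through the periodic-beliefs lemma: the steady-state messages let one cap an arbitrarily deep tree so that its exact solution equals the replicated vector $\mathbf{O}\boldsymbol{\mu}^{(N)}$ at \emph{every} node, whence $\mathbf{O}\mathbf{A}\boldsymbol{\mu}^{(N)} = \mathbf{O}\mathbf{b}$ on enough rows to conclude $\mathbf{A}\boldsymbol{\mu}^{(N)}=\mathbf{b}$ via invertibility of $\mathbf{O}^{T}\mathbf{O}$. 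This mechanism --- exactness of the mean without exactness of the precision --- is precisely what your Schur-complement identification misses; to repair your argument you would need to introduce the computation tree (or an equivalent unwrapping) rather than eliminate on the original graph. Your remarks on the nonsymmetric reparametrization and the block-LU correspondence are fine, but they address the easy part of the theorem.
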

That is, the steady state of the message flow, if it exists, corresponds to the exact solution. The proof is given in \ref{Appendices:Consistency_of_generalized_GaBP}.

For the sufficient condition for convergence we need additional definitions. First, based on a given region graph $\left\{\mathcal{R}, \mathcal{E}_{\mathcal{R}}\right\}$, we define a set of variable subsets
\begin{equation}\label{partition}
    F \equiv \left(\underset{i}{\cup}\left\{l_i\right\}\right)\cup\left(\underset{j}{\cup}\left\{L_j\backslash \underset{p\in C(L_j)}{\cup}\left\{l_p\right\}\right\}\right),
\end{equation}
where $l_i$ is a small region and $L_j$ is a large region. Using $F$, we form a partition of the matrix $\mathbf{A}$ and the right-hand-side vector $\mathbf{b}$
\begin{equation}\label{matrix_partition}
    \mathbf{A} = 
    \begin{pmatrix}
        \mathbf{A}_{ii}&\mathbf{A}_{ij}&\hdots\\
        \mathbf{A}_{ji}&\mathbf{A}_{jj}&\hdots\\
        \vdots&\vdots&\ddots\\
    \end{pmatrix},
    \mathbf{b} = 
    \begin{pmatrix}
        \mathbf{b}_{i}\\
        \mathbf{b}_{j}\\
        \vdots\\
    \end{pmatrix},
\end{equation}
where each diagonal block corresponds to the element of the set $F$. We also define
\begin{equation}\label{generalized_matrix_R}
    \begin{split}
    &\widetilde{\mathbf{A}}_{ij} = \mathbf{A}_{ii}^{-1}\mathbf{A}_{ij}\equiv \mathbf{I}_{ij} - \widetilde{\mathbf{R}}_{ij},\\
    &\left\|\widetilde{\mathbf{R}}\right\|_{ij} \equiv \left\|\widetilde{\mathbf{R}}_{ij}\right\|,~\widetilde{\mathbf{b}}_{i} = \mathbf{A}_{ii}^{-1}\mathbf{b}_i.
    \end{split}
\end{equation}
Note that the second line in the preceding equation contains a definition of matrix $\left\|\widetilde{\mathbf{R}}\right\|$, which depends on the operator norm $\left\|\cdot\right\|$ (see \cite[ch. 5, Definition 5.6.3]{Horn:2012:MA:2422911}). 

The following statement gives sufficient conditions for convergence.
\begin{restatable}{theorem}{generalizedGaBPconvergence}
\label{prop:Generalized_GaBP_convergence}
    If for matrix (\ref{matrix_partition}) which is based on  partition (\ref{partition}) $\det\mathbf{A}_{ii}\neq 0~\forall i$ and $\rho\left(\left\|\widetilde{\mathbf{R}}\right\|\right)<1$ in some operator norm, then two-layer generalized GaBP (algorithm \ref{algorithm:non_symmetric_two_layers_GaBP}) converges to the exact solution $\mathbf{x} = \mathbf{A}^{-1}\mathbf{b}$.
\end{restatable}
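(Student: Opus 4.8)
The plan is to reduce the convergence of Algorithm~\ref{algorithm:non_symmetric_two_layers_GaBP} to a statement about a classical block-iterative scheme, exactly paralleling the structure of the proof of \cref{prop:GaBP_convergence} for the scalar case. The key observation, established in \cref{Section:Generalized_GaBP_solvers.Subsection:Elimination_perspective}, is that each message $\boldsymbol{\Lambda}_{ji}$ encodes a Schur-complement correction $-\mathbf{A}_{k\overline{j}}\mathbf{A}_{\overline{j}\overline{j}}^{-1}\mathbf{A}_{\overline{j}k}$, i.e.\ a block-elimination step. First I would set up the block structure: using the partition~(\ref{partition}) and the associated block decomposition~(\ref{matrix_partition}), I would write the message-passing recursion~(\ref{Two_layers_GaBP_rules}) as an affine map on the collection of precision messages $\boldsymbol{\Lambda}^{(n)}$ and mean messages $\mathbf{m}^{(n)}$. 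The precision update decouples from the mean update (as noted after Algorithm~\ref{algorithm:non_symmetric_GaBP}), so I would treat the two separately, handling $\boldsymbol{\Lambda}^{(n)}$ first.

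The central step is to show that the precision messages converge. I would unroll the fixed-point iteration for $\boldsymbol{\Lambda}^{(n)}$ into a ``walk-sum'' type expansion, the block analogue of the scalar argument in \cref{Appendices:Convergence_of_GaBP}. Each application of the update rule~(\ref{Two_layers_GaBP_rules}) composes one more Schur correction, and these corrections correspond combinatorially to walks on the hypergraph $\mathcal{G}$ of \cref{fig:two_layers_network:c}. The magnitude of a walk of length $\ell$ is controlled by a product of block factors $\mathbf{A}_{ii}^{-1}\mathbf{A}_{ij}=\mathbf{I}_{ij}-\widetilde{\mathbf{R}}_{ij}$, so after passing to operator norms each such product is bounded by the corresponding entry of the nonnegative matrix $\left\|\widetilde{\mathbf{R}}\right\|$ defined in~(\ref{generalized_matrix_R}). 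The condition $\rho\!\left(\left\|\widetilde{\mathbf{R}}\right\|\right)<1$ then guarantees that the series of walk contributions converges absolutely, giving a Cauchy sequence $\boldsymbol{\Lambda}^{(n)}$ with a well-defined limit; the hypothesis $\det\mathbf{A}_{ii}\neq 0$ ensures every block inverse appearing along the way exists. Once the precision messages are shown to converge, the mean messages $\mathbf{m}^{(n)}$ satisfy a linear recursion whose iteration matrix is governed by the same spectral radius, so they converge as well.

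Finally I would invoke \cref{prop:generalized_GaBP_Consistency}: since the iteration has been shown to reach a steady state $\mathbf{m}^{(\infty)}_e,\boldsymbol{\Lambda}^{(\infty)}_e$ satisfying the fixed-point equations, the consistency theorem identifies the recovered quantity $\left[\mathbf{x}\right]_{L}=\widetilde{\boldsymbol{\Lambda}}_0^{-1}\widetilde{\mathbf{b}}_0$ with $\left[\mathbf{A}^{-1}\mathbf{b}\right]_{L}$ on every large region, and since the large regions cover all variables this yields $\mathbf{x}=\mathbf{A}^{-1}\mathbf{b}$.

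I expect the main obstacle to be the block (noncommutative) walk-sum bookkeeping. In the scalar GaBP proof the walk weights are real numbers that multiply freely, but here each weight is a matrix block, so I cannot simply factor a walk's contribution into a scalar product; I must instead submultiplicatively bound the operator norm of each matrix product by the product of norms and only then compare termwise against powers of $\left\|\widetilde{\mathbf{R}}\right\|$. Making this domination rigorous—showing that the block expansion is genuinely majorized entrywise by the scalar expansion for $(\mathbf{I}-\left\|\widetilde{\mathbf{R}}\right\|)^{-1}$, and that no cancellation is needed for convergence—is the delicate part, together with verifying that the intermediate Schur complements $\widetilde{\boldsymbol{\Lambda}}_0$ remain invertible throughout so that the messages are well-defined at every step.
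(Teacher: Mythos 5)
Your overall strategy---expand the message iteration as a block walk-sum, use $\rho\left(\left\|\widetilde{\mathbf{R}}\right\|\right)<1$ to obtain absolute convergence, and read off the limit---is the same as the paper's: Appendix \ref{Appendices:Convergence_of_generalized_GaBP} mirrors the scalar Appendix \ref{Appendices:Convergence_of_GaBP} exactly as you anticipate, including the decomposition into single-visit and single-revisit walks that reproduces the update rules (\ref{rough_message_update}). However, there are two concrete gaps. First, you have located the difficulty in the wrong place. The noncommutative bookkeeping you worry about is dispatched in the paper simply by defining the walk weight as an ordered product of blocks $\widetilde{\mathbf{R}}_{i_{k+1}i_k}$ and invoking absolute convergence of the rearranged Neumann series (proposition \ref{prop:absolute_convergence}); no further termwise norm domination is required. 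The genuinely delicate step, which your proposal skips, is relating walks on the graph of the partition $F$ (equations (\ref{partition}), (\ref{matrix_partition})) to walks on the computation tree of the \emph{generalized} algorithm, whose nodes are overlapping large regions. The paper devotes the entire second half of the proof (\ref{Appendices:Convergence_of_generalized_GaBP:2}) to an ``elementary refinement'' argument: a block is split in two and walks that cross, start at, or end at the split block must be re-weighted by factors of $\left(\mathbf{A}_{ii}\right)^{-1}$ (equation (\ref{reweight})) so that tree walks and graph walks carry matching weights. Without this correspondence you cannot conclude that the partial sums the algorithm computes are sub-series of the convergent walk-sum, i.e.\ that they converge to the correct limit. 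Your placement of the walks on the hypergraph of \cref{fig:two_layers_network:c} passes over exactly this issue, since the hypergraph nodes (large regions) are not the blocks of the partition $F$ on which $\widetilde{\mathbf{R}}$ is defined.

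Second, your closing appeal to \cref{prop:generalized_GaBP_Consistency} does not quite work as stated: that theorem assumes the messages are \emph{exactly} stationary from some finite $N$ onward, whereas your argument yields only convergence in the limit. You would need an additional continuity argument showing that the limit is a fixed point of the update map and that the computation-tree construction in the consistency proof applies to that fixed point. The paper sidesteps this by identifying the limit of the walk-sums on the computation tree directly with the walk-sum representation of $\mathbf{A}^{-1}\mathbf{b}$, so no separate consistency step is needed.
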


The proof of this theorems appears in \ref{Appendices:Convergence_of_generalized_GaBP}. From the second part of the argument in \ref{Appendices:Convergence_of_generalized_GaBP:2}, one can deduce the following
\begin{corollary}
    Generalized GaBP (algorithm \ref{algorithm:non_symmetric_two_layers_GaBP}) converges whenever GaBP converges (\cref{prop:GaBP_convergence}) and all submatrices corresponding to the large blocks are invertible (see equations (\ref{partition}), (\ref{matrix_partition})).
\end{corollary}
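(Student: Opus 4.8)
The plan is to show that the two hypotheses---convergence of point GaBP and invertibility of the large diagonal blocks---supply exactly the ingredients consumed by the contraction estimate of \ref{Appendices:Convergence_of_generalized_GaBP:2}, so that the very same argument, run with a weight vector inherited from the point comparison matrix, certifies convergence of \cref{algorithm:non_symmetric_two_layers_GaBP}. By \cref{prop:GaBP_convergence}, ``GaBP converges'' unpacks into $A_{ii}\neq0$ for every scalar index together with $\rho(\widetilde{|\mathbf{R}|})<1$. Since $\widetilde{|\mathbf{R}|}\geq0$ has spectral radius below one, I would first invoke Perron--Frobenius to produce a strictly positive vector $\mathbf{u}$ and a constant $\theta\in\bigl(\rho(\widetilde{|\mathbf{R}|}),1\bigr)$ with $\widetilde{|\mathbf{R}|}\mathbf{u}\leq\theta\mathbf{u}$ componentwise, i.e. the weighted strict diagonal dominance $\sum_{j\neq i}|A_{ij}|u_j\leq\theta\,|A_{ii}|u_i$ for all $i$.

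Next I would observe that this single inequality already controls every diagonal block of the partition (\ref{matrix_partition}): for any block $p$ and any $k\in p$ one has $\sum_{l\in p,\,l\neq k}|A_{kl}|u_l\leq\sum_{l\neq k}|A_{kl}|u_l\leq\theta\,|A_{kk}|u_k$, so each $\mathbf{A}_{pp}$ is itself weighted strictly diagonally dominant and hence nonsingular; in this sense invertibility of the large blocks is in fact automatic from $\rho(\widetilde{|\mathbf{R}|})<1$, and the second hypothesis is kept only to match the phrasing of \cref{prop:Generalized_GaBP_convergence}. I would then equip each block with the weighted max-norm $\|\mathbf{y}\|_p=\max_{k\in p}|y_k|/u_k$ and replay the estimate of \ref{Appendices:Convergence_of_generalized_GaBP:2} on the block iteration underlying \cref{algorithm:non_symmetric_two_layers_GaBP}: writing the error update as $\mathbf{A}_{pp}\,\mathbf{e}_p^{\text{new}}=-\sum_{q\neq p}\mathbf{A}_{pq}\mathbf{e}_q^{\text{old}}$ and evaluating at the index $k^\star\in p$ realizing $\|\mathbf{e}_p^{\text{new}}\|_p$, the global dominance bounds the off-block contribution while the within-block terms are absorbed by the dominance of $\mathbf{A}_{pp}$; a maximum-element argument at $k^\star$ then delivers $\|\mathbf{e}^{\text{new}}\|_{\mathbf{u}}\leq\theta\,\|\mathbf{e}^{\text{old}}\|_{\mathbf{u}}$ with the \emph{same} $\theta<1$. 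Convergence to $\mathbf{x}=\mathbf{A}^{-1}\mathbf{b}$ then follows, consistency being supplied by \cref{prop:generalized_GaBP_Consistency}.

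The step I expect to be delicate is the interface with how \cref{prop:Generalized_GaBP_convergence} is stated. That theorem asks for $\rho(\|\widetilde{\mathbf{R}}\|)<1$ with block entries $\|\mathbf{A}_{ii}^{-1}\mathbf{A}_{ij}\|$, and the tempting route---bound $\|\mathbf{A}_{ii}^{-1}\mathbf{A}_{ij}\|\leq\|\mathbf{A}_{ii}^{-1}\|\,\|\mathbf{A}_{ij}\|$ and sum---introduces a factor $1/(1-\theta)$ from inverting the diagonal block, which can inflate the weighted row sums past one and break the certificate even when the block iteration is genuinely contractive. The fix, and the reason I route through the contraction argument of \ref{Appendices:Convergence_of_generalized_GaBP:2} rather than through the literal hypothesis of \cref{prop:Generalized_GaBP_convergence}, is to never split off $\mathbf{A}_{ii}^{-1}$: keeping the relation $\mathbf{A}_{pp}\,\mathbf{e}_p^{\text{new}}=\dots$ intact lets the weighted maximum principle absorb the inverse implicitly, bounding $\rho$ of the block error map by $\theta$ directly. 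Using the weighted \emph{max}-norm rather than a Euclidean norm is precisely what makes the diagonal-dominance bookkeeping close without loss.
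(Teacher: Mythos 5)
Your opening moves are sound and even add something the paper leaves implicit: the Perron--Frobenius construction of a strictly positive $\mathbf{u}$ with $\widetilde{|\mathbf{R}|}\mathbf{u}\leq\theta\mathbf{u}$ for some $\theta<1$ is valid (take $\mathbf{u}=\sum_{n\geq0}\theta^{-n}\widetilde{|\mathbf{R}|}^{n}\mathbf{1}$), and your observation that this forces every diagonal block $\mathbf{A}_{pp}$ of the partition (\ref{matrix_partition}) to be weighted strictly diagonally dominant, hence nonsingular, is correct --- so the invertibility hypothesis is indeed redundant under $\rho(\widetilde{|\mathbf{R}|})<1$. Your worry about the $\|\mathbf{A}_{ii}^{-1}\|\,\|\mathbf{A}_{ij}\|$ inflation when trying to verify the literal hypothesis of \cref{prop:Generalized_GaBP_convergence} is also well placed.

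The gap is in the central step. You run the weighted max-norm contraction on the recursion $\mathbf{A}_{pp}\,\mathbf{e}_p^{\text{new}}=-\sum_{q\neq p}\mathbf{A}_{pq}\mathbf{e}_q^{\text{old}}$, but that is the error map of \emph{block Jacobi}, not of \cref{algorithm:non_symmetric_two_layers_GaBP}. In the generalized GaBP update the diagonal block is $\widetilde{\mathbf{\Lambda}}_0=[\mathbf{A}]_L+\sum\mathbf{\Lambda}_{L'l}$ and the right-hand side is similarly shifted by the mean messages; the precision messages $\mathbf{\Lambda}_{L'l}$ are Schur-complement-type corrections that themselves evolve nonlinearly from sweep to sweep, and the large regions overlap, so the iteration is not a stationary splitting $\mathbf{e}^{\text{new}}=\mathbf{M}\mathbf{e}^{\text{old}}$ to which a one-step maximum-principle estimate applies. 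A contraction bound for block Jacobi therefore certifies convergence of a different algorithm; in general neither implies the other. The paper closes this gap by a different mechanism: part \ref{Appendices:Convergence_of_generalized_GaBP:1} identifies the generalized messages with single-visit and single-revisit walk-sums on the computation tree, and part \ref{Appendices:Convergence_of_generalized_GaBP:2} shows via elementary graph refinements that every such walk corresponds to a set of scalar walks of equal weight, so absolute scalar walk-summability ($\rho(\widetilde{|\mathbf{R}|})<1$, i.e.\ the hypothesis of \cref{prop:GaBP_convergence}) already guarantees convergence of all the block walk-sums the algorithm computes, with block invertibility needed only to make the refinement weights (and the algorithm's inverses) well defined. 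To repair your proof you would need either to reproduce that walk-sum identification or to otherwise connect your weighted dominance estimate to the actual message dynamics; as written, the estimate never touches them.
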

The opposite does not hold. For example, consider a matrix
\begin{equation}\label{generalized_GaBP_example}
    \begin{split}
        &\mathbf{A} = 
        \begin{pmatrix}
            10 & 1.5 & 2 & 2 & 0 & 2 & 0\\
            2 & 4 & 2.5 & 0 & 2 & 0 & 0\\
            2 & 3 & 5 & 0 & 0 & 0 & 1\\
            2 & 0 & 0 & 10 & 0.5 & 1 & 0\\
            0 & 2 & 0 & 0.5 & 5 & 0 & 1\\
            2 & 0 & 0 & 1 & 0 & 7 & 1\\
            0 & 0 & 1 & 0 & 1 & 1 & 2
        \end{pmatrix}=
        \begin{pmatrix}
            \mathbf{A}_{11}&\mathbf{A}_{12}&\mathbf{A}_{13}\\
            \mathbf{A}_{21}&\mathbf{A}_{22}&\mathbf{A}_{23}\\
            \mathbf{A}_{31}&\mathbf{A}_{32}&\mathbf{A}_{33}
        \end{pmatrix},\\
        &\mathbf{A}_{11}\in \mathbb{R}^{3\times 3},~\mathbf{A}_{22}\in \mathbb{R}^{2\times 2},~\mathbf{A}_{33}\in \mathbb{R}^{2\times 2}.
    \end{split}
\end{equation}
In this case, the spectral radius of matrix $\left|\mathbf{\widetilde{R}}\right|$ defined in \cref{prop:GaBP_convergence} equals $\sim 1.03$ and GaBP diverges\footnote{Note that the divergence of GaBP does not follow from $\left|\mathbf{\widetilde{R}}\right|>1$  as \cref{prop:GaBP_convergence} provides only sufficient conditions. For this particular case, the pathological behavior of GaBP follows from the numerical experiment (see \cite{git_GaBP} for details).}. On the other hand, the spectral radius of $\left|\mathbf{\widetilde{R}}\right|$ defined by  (\ref{generalized_matrix_R}) and the partition given in  (\ref{generalized_GaBP_example}) is smaller than one in $l_{\infty}$ and spectral norms \cite[Examples 5.6.5, 5.6.6]{Horn:2012:MA:2422911} (see \cite{git_GaBP} for further details).

\section{GaBP as a smoother for the multigrid method}\label{Section:GaBP_as_a_smoother_for_the_multigrid_scheme}
The most straightforward view on the geometric multigrid is to describe it as an acceleration scheme for classical iterative methods. For completeness, we briefly recall the main ideas. 

The multigrid consists of four essential elements: a projection operator $\mathbf{I}_{V}^{V^{'}}:V\rightarrow V^{'}$ ($V$, $V^{'}$ are linear spaces) that reduces the number of degrees of freedom, an interpolation operator $\mathbf{I}_{V^{'}}^{V}:V^{'}\rightarrow V$ that acts in the "inverse" way, a smoothing operator $\mathbf{S}_{V}:V\rightarrow V$ which is usually a classical relaxation method, and a set of linear operators $\mathbf{A}_{V^{'}}$ that approximate $\mathbf{A}$ on coarse spaces $V^{'}$. What we describe next is a two-grid cycle.
\begin{itemize}
    \item For the current approximation $\mathbf{x}^{n}$ of solutions of $\mathbf{A}\mathbf{x} = \mathbf{b}$, one performs several relaxation steps $\overline{\mathbf{x}} = \mathbf{S}^{\nu} \mathbf{x}^{n}$.
    \item Then, based on properties of $\mathbf{S}$, the linear space $V^{'}$ and the projection operator $\mathbf{I}_{V}^{V^{'}}:V\rightarrow V^{'}$ are constructed. The purpose of this space is to represent the residual $\mathbf{r} = \mathbf{b} - \mathbf{A}\overline{\mathbf{x}}$ and an error $\mathbf{e} = \mathbf{x}_{\text{exact}} - \overline{\mathbf{x}}$ accurately using fewer degrees of freedom $\left|V^{'}\right|<|V|$.
    \item Having the space $V^{'}$, one constructs an operator $\mathbf{A}^{'}$ that approximates $\mathbf{A}$ and solves the  error equation $\mathbf{A}^{'}\mathbf{e}^{'}=\mathbf{I}_{V}^{V^{'}}\mathbf{r}$.
    \item The error, after projection back to $V$, gives the next approximation to the exact solution, $\mathbf{x}^{n+1} = \mathbf{S}^{\mu}\left(\mathbf{x}^{n} + \mathbf{I}^{V}_{V^{'}}\mathbf{e}^{'}\right)$.
\end{itemize}
The multigrid utilizes a two-grid cycle to solve the error equation $\mathbf{A}^{'}\mathbf{e}^{'}=\mathbf{I}_{V}^{V^{'}}\mathbf{r}$ itself. It produces the chain of spaces (grids in the geometric setup), projection operators that allow moving between them, and a set of approximate linear operators. For more details, we refer the reader to other resources: a simple introduction to geometric multigrid can be found in \cite[ch. 13]{saad2001iterative}, for the algebraic multigrid a recent review \cite{xu2017algebraic},  physical considerations about algebraic multigrid can be found in the introduction  of \cite{ron2011relaxation}, and among other books on the subject, \cite{trottenberg2000multigrid} provides a comprehensive introduction for practitioners.

\begin{algorithm}[t]
    \caption{GaBP as a smoother.}
    \label{algorithm:smoother_GaBP}
    \begin{algorithmic}
        \STATE Compute a residual $\mathbf{r}^{n} = \mathbf{b} - \mathbf{A}\mathbf{x}^{n}$.
        \STATE Apply $\mu$ sweeps of \cref{algorithm:non_symmetric_GaBP} or \ref{algorithm:non_symmetric_two_layers_GaBP} to the linear system $\mathbf{A}\mathbf{e}= \mathbf{r}^{n}$.
        \STATE Perform an error correction $\mathbf{x}^{n+1} = \mathbf{x}^{n} + \mathbf{e}^{\mu}$.
    \end{algorithmic}
\end{algorithm}

Here, we consider only linear systems of equations arising from finite difference discretization of elliptic equations with smooth coefficients in two space dimensions. In this case it is possible to use grids in place of linear spaces. Let the finest grid contain $2^{J} + 1$ points $\mathcal{G}_{J} = \left\{(i+j)h| h= 2^{-J}, i, j = \overline{0, 2^{J}}\right\}$, then the coarser grid $\mathcal{G}_{J-1}$ contains each second points along both directions. As we are working in the physical space, the restriction operator $\mathbf{I}_{V}^{V^{'}}$ computes a weighted average of neighbouring points, operator $\mathbf{I}^{V}_{V^{'}}$ performs interpolation, and $\mathbf{A}$ on the grid $\mathcal{G}_{J^{'}}$ is a finite difference approximation of the differential operator. In this article we always use full weighting restriction \cite[eq. 2.3.3]{trottenberg2000multigrid} and bilinear interpolation \cite[eq. 2.3.7]{trottenberg2000multigrid}.

The smoother should be a mapping $\mathbf{S}:\mathbf{x}^{n}\rightarrow \mathbf{x}^{n+1}$. Although GaBP is not of this form, one can use an error correction scheme as explained in \cref{algorithm:smoother_GaBP}.
In the next two subsections we analyze smoothing properties of \cref{algorithm:smoother_GaBP} and estimate its computational complexity.

\subsection{Local Fourier Analysis}
\label{Section:GaBP_as_a_smoother_for_the_multigrid_scheme.Subsection:Local_Fourier_Analysis}
Local Fourier Analysis allows us to compute the spectral radius of the  two-grid cycle, the smoothing factor of the relaxation scheme, and the  error contraction in a chosen norm \cite[ch. 4]{trottenberg2000multigrid}. In this subsection, we apply the analysis to the central difference discretization of the Laplace equation in two spatial dimensions
\begin{equation}
    \frac{1}{h^2}\left[\begin{matrix}
    &-1&\\
    -1&4&-1\\
    &-1&\\
    \end{matrix}\right]u_{ij} = f_{ij}.
\end{equation}
If after  $\mu$ sweeps of \cref{algorithm:non_symmetric_GaBP} the solution has a form $\mathbf{S} \mathbf{r}^{n}$, then the output of \cref{algorithm:smoother_GaBP} is $\mathbf{x}^{n+1} = \mathbf{S}\left(\mathbf{b} - \mathbf{A}\mathbf{x}^{n}\right) + \mathbf{x}^{n}$. Thus, for the error we have
\begin{equation}
    \mathbf{e}^{n+1} = \left(\mathbf{I} - \mathbf{S}\mathbf{A}\right)\mathbf{e}^{n}.
\end{equation}
Now we need to find a stencil of the operator $\mathbf{S}$. For GaBP it differs for parallel and sequential versions. For one and two sweeps of parallel version on the infinite lattice we have
\begin{equation}
    \mathbf{S}^{1}_{\text{parallel}} u_{ij} = \frac{h^2}{4} u_{ij},
    \mathbf{S}^{2}_{\text{parallel}} u_{ij} =
    \frac{h^2}{12}\left[\begin{matrix}
        &1&\\
        1&4&1\\
        &1&\\
    \end{matrix}\right]u_{ij}.
\end{equation}
It is easy to compute symbols
\begin{equation}
    \begin{split}
        &\mathbf{S}^{1}_{\text{parallel}} (\boldsymbol{\theta}) = \frac{h^2}{4},\mathbf{S}^{2}_{\text{parallel}} (\boldsymbol{\theta}) = \frac{h^2}{3}\left(1 + \cos\left(\frac{\theta_1+\theta_2}{2}\right)\cos\left(\frac{\theta_1-\theta_2}{2}\right)\right),\\
        &\mathbf{A}(\boldsymbol{\theta}) = \frac{4}{h^2}\left(1 - \cos\left(\frac{\theta_1+\theta_2}{2}\right)\cos\left(\frac{\theta_1-\theta_2}{2}\right)\right).
    \end{split}
\end{equation}
As the smoothing factor is
\begin{equation}
    \mu = \max_{\theta\in \text{high}}\left|1 - \mathbf{S}\left(\boldsymbol{\theta}\right)\mathbf{A}\left(\boldsymbol{\theta}\right)\right|,~\text{high} = \left[-\pi,\pi\right[^2\backslash\left[-\frac{\pi}{2},\frac{\pi}{2}\right[^2,
\end{equation}
we can conclude that the parallel version of GaBP shows no smoothing properties. This conclusion is confirmed by our numerical experiments.

\begin{figure}[t]
    \centering
    \subfloat[]{\label{fig:convergence_anisotropic_problem:a}\includegraphics[scale=0.42]{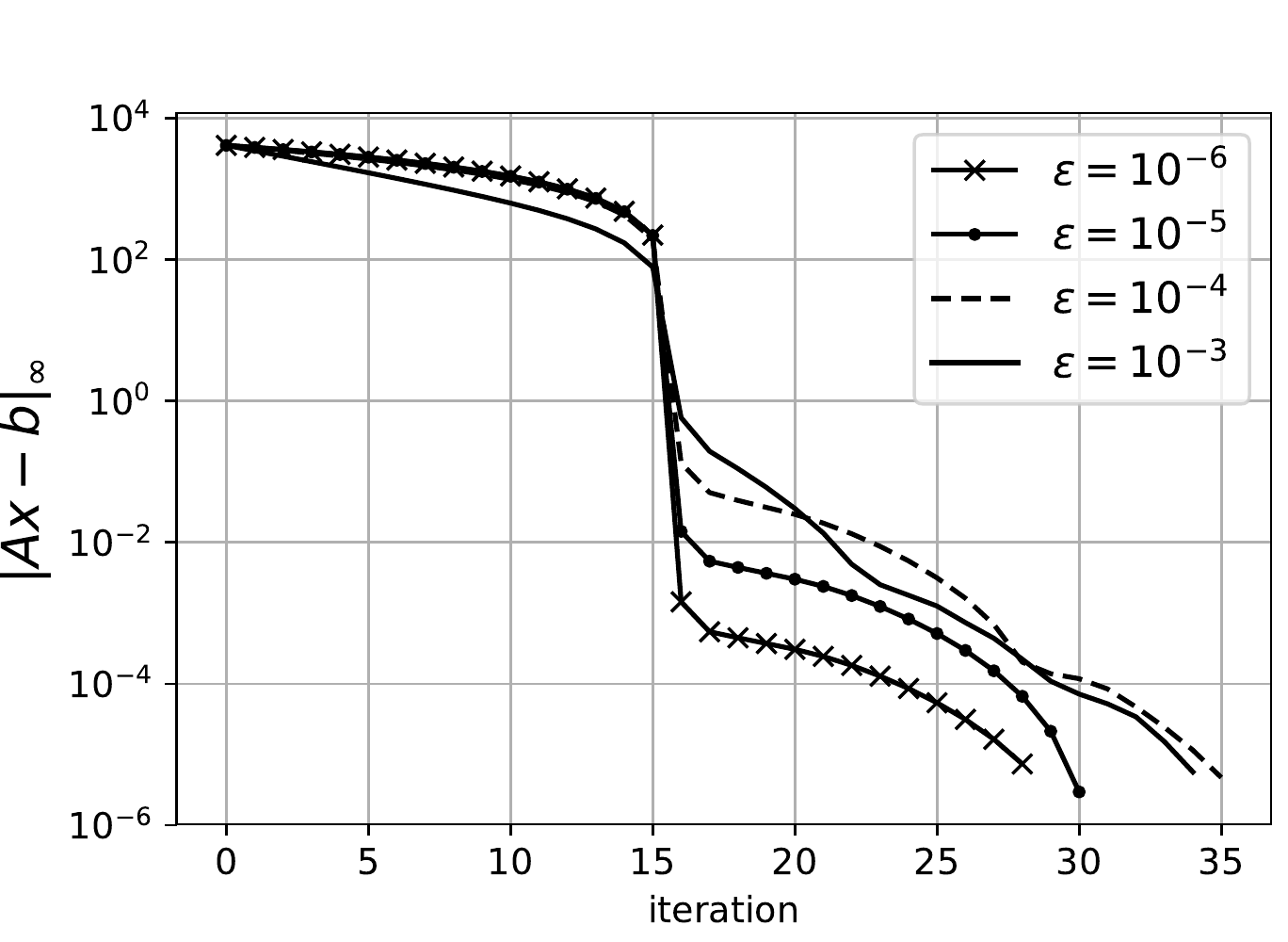}}\quad
    \subfloat[]{\label{fig:convergence_anisotropic_problem:b}\includegraphics[scale=0.42]{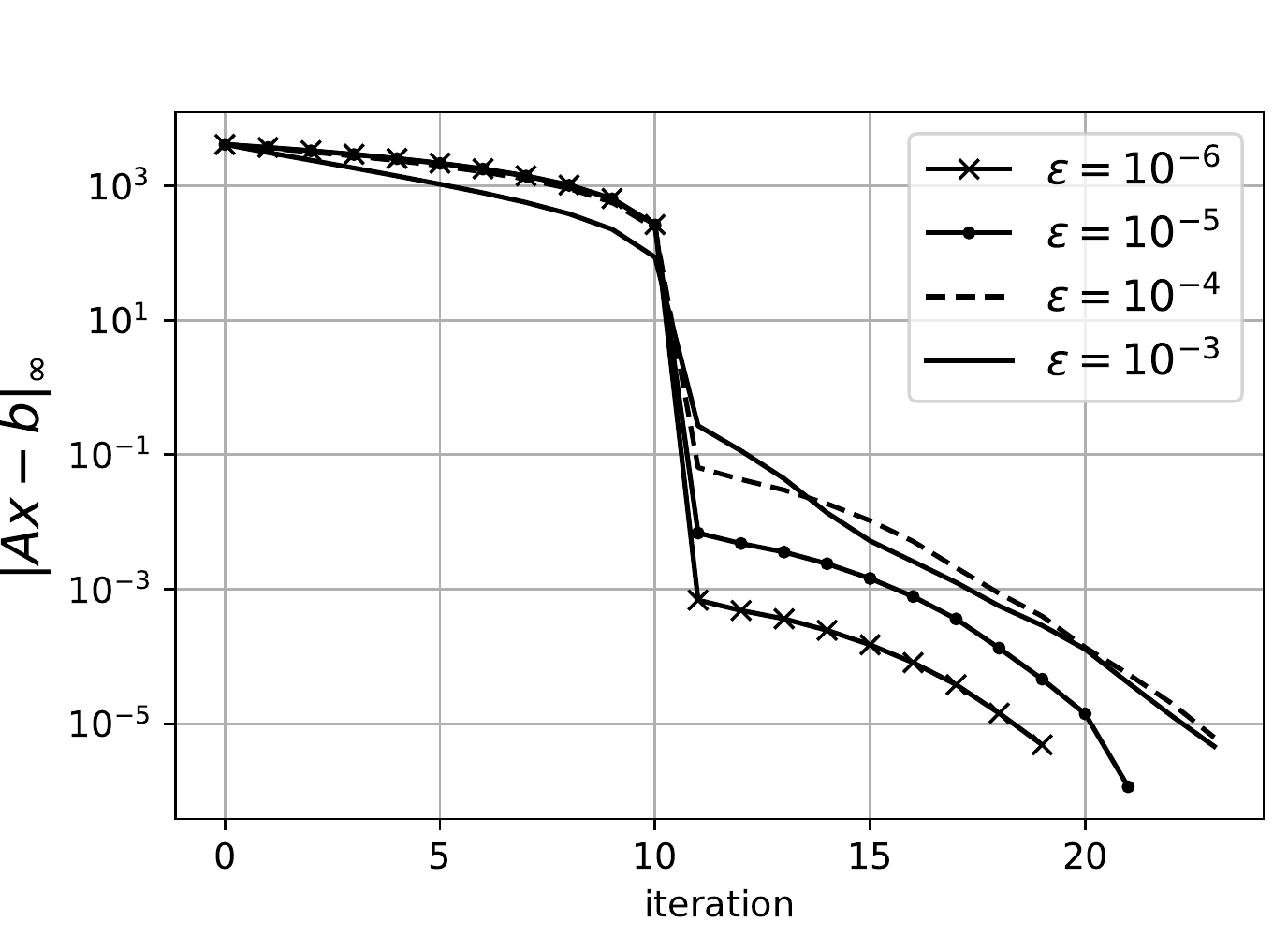}}
    \caption{Convergence histories for different anisotropies $\epsilon$: (a) two GaBP sweeps and (b) three GaBP sweeps for presmoothing and postsmoothing. In both cases, the fine grid consists of $2^{6}+1$ points, and the coarsest grid consists of $2^{3}+1$ points along each coordinate line. For comparison, if $\epsilon = 10^{-3}$, multigrid with Gauss-Seidel smoother (3 presmoothing and postsmoothing sweeps) converges after $\sim 400$ iterations. The sharp drops of the residual are the result of the cumulative effect that eludes explanation via Local Fourier Analysis. Namely, for small epsilon, vertical lines are effectively decoupled from each other. Scheme (a) needs $15$ iterations to solve exactly systems of linear equations for each line, and scheme (b) need $10$ iterations. See \cref{Section:GaBP_as_a_smoother_for_the_multigrid_scheme.Subsection:Local_Fourier_Analysis} for details.}
    \label{fig:convergence_anisotropic_problem}
\end{figure}

In the sequential case, one can deduce the form of $\mathbf{S}$ based on the elimination perspective. When one starts to move along the lattice, messages correspond to the elimination of variables, which means that 
\begin{equation}
    \mathbf{S}^{-1}_{\text{sequential}} = 
    \frac{1}{h^2}\left[\begin{matrix}
        &&\\
        -1&4&~\\
        &-1&\\
    \end{matrix}\right].
\end{equation}
Then, for the smoothing factor we have
\begin{equation}
    \mu = \max_{\theta\in \text{high}}\sqrt{\left|\frac{\cos{\left (\theta_{1} - \theta_{2} \right )} + 1}{4 \cos{\left (\theta_{1} \right )} + 4 \cos{\left (\theta_{2} \right )} - \cos{\left (\theta_{1} - \theta_{2} \right )} - 9}\right|} = \frac{1}{2},
\end{equation}
for $\theta_2 = \frac{\pi}{2}$ and $\theta_1 = 2\arctan\frac{1}{3}$. This means that the smoothing factor for the sequential GaBP coincides with the one for sequential Gauss-Seidel iteration scheme \cite[Example 4.3.4]{trottenberg2000multigrid}. It is also clear that for the anisotropic problem
\begin{equation}
    \frac{1}{h^2}\left[\begin{matrix}
    &-1&\\
    -\epsilon&2(1+\epsilon)&-\epsilon\\
    &-1&\\
    \end{matrix}\right]u_{ij} = f_{ij},
\end{equation}
both Gauss-Seidel scheme and sequential GaBP lose their smoothing properties when $\epsilon\rightarrow 0$. However, numerical experiments  (figure~\ref{fig:convergence_anisotropic_problem}) show that the convergence rate of GaBP does not depend on $\epsilon$. An explanation for this particular case is straightforward. For sufficiently small $\epsilon$ equations for each vertical line (i.e., in $y$ direction) are independent. GaBP is an exact solver for trees. The single multigrid iteration eliminates variables from $2+2=4$ neighbours in case \ref{fig:convergence_anisotropic_problem:a} and from $3+3=6$ neighbours in case \ref{fig:convergence_anisotropic_problem:b}. When messages cover the whole line of $2^{6}-1 = 63$ nodes, the system of linear equations for each vertical line is solved exactly. It gives  $63/4\sim 15$ iterations for \ref{fig:convergence_anisotropic_problem:a}, and $63/6\sim 10$ iterations for \ref{fig:convergence_anisotropic_problem:b}.

\begin{table}
\centering
    \begin{tabular}{c|c|c|c|c|c}
    Sweeps & Stencil & GaBP & line GaBP & GS & $x/y$-GS\\\hline
    \multirow{2}{1em}{$1$} & 5 points & $18N$& $38N$ & $9N$ & $14N$ \\\cline{2-6}
    & 9 points & $32N$& N/A & $17N$ & $21N$ \\\hline
    \multirow{2}{1em}{$2$} & 5 points & $30N$ & $65N$ & $18N$ & $28N$  \\\cline{2-6}
    & 9 points & $56N$& N/A & $34N$  & $42N$\\\hline
    \multirow{2}{3em}{$M\geq3$} & 5 points & $12N\cdot M +6N$& $28N\cdot M + 9N$ & $9N\cdot M$ & $14N\cdot M$ \\\cline{2-6}
    & 9 points & $24N\cdot M + 8N$& N/A & $17N\cdot M$ & $21N\cdot M$ \\\hline
    \end{tabular}
    \caption{Computational complexity of GaBP (error correction scheme) with precomputed $\Lambda$ messages in comparison with the classical Gauss-Seidel relaxation schemes. Line GaBP refers to the partition presented in \cref{fig:two_layers_network:a} and $x/y$-GS is a classical line smoother. As one can see from the theory of generalized GaBP, it is not possible to apply line GaBP for $9$ points stencil, because large regions do not cover all edges of the original graph. However, one indeed can construct line GaBP smoothers for this case, too, but we do not consider them here.}
    \label{fig:computational_complexity}
\end{table}

The effect displayed in \cref{fig:convergence_anisotropic_problem} is a manifestation of the dynamic nature of GaBP. Even as part of the multigrid it maintains information about all previous iterations. More convergence histories can be found below, in the section with numerical examples. Overall, we  conclude that sequential GaBP as part of the  multigrid behaves similarly to Gauss-Seidel in the absence of anisotropy, but is substantially more robust in the presence of anisotropy. The behavior captured in \cref{fig:convergence_anisotropic_problem} also illustrates that Local Fourier Analysis is not an appropriate tool to analyze GaBP.

\subsection{Reducing computational complexity}
\label{Section:GaBP_as_a_smoother_for_the_multigrid_scheme.Subsection:Reducing_the_computational_complexity}
The number of floating point operations per iteration for algorithms~\ref{algorithm:non_symmetric_GaBP} and \ref{algorithm:non_symmetric_two_layers_GaBP} depends on the graph of the matrix $A$. Here, we consider the operator with the dense $9$ point stencil
\begin{equation}
    A = \left[\begin{matrix}
    *&*&*\\
    *&*&*\\
    *&*&*\\
    \end{matrix}\right],
\end{equation}
which can come from the second order finite difference approximation of a  differential operator containing second and first derivatives. The same analysis for the 5 points stencil is straightforward. For convenience, we split \cref{algorithm:non_symmetric_GaBP} (sequential version) into three parts:
\begin{itemize}
    \item Accumulation stage.  $\Sigma$ and $m$ are computed.
    \item Update stage. New messages $\widetilde{\Lambda}$ and $\widetilde{\mu}$ are constructed from the previous ones.
    \item Termination stage. The final answer $m/\Sigma$ is obtained.
\end{itemize}
We also neglect all effects from boundaries. Under these assumptions, the  number of floating point operations for the single sweep GaBP is
\begin{equation}
    \#_{\text{GaBP}_1}=\underbrace{4N+8N}_{\text{accumulate}} + \underbrace{8N+12N}_{\text{update}} + \underbrace{N}_{\text{terminate}} + \underbrace{18N}_{\mathbf{r} = \mathbf{b} - \mathbf{A}\mathbf{x}_0} + \underbrace{N}_{+\mathbf{x}_0} = 52N.
\end{equation}
Here, we perform only a half of accumulation stage and a half of update stage, because we do not need to receive messages from nodes that we have not visited yet, nor we need to send messages to already visited nodes.

For lexicographical Gauss-Seidel scheme, the number of floating point operations is $\#_{\text{LEX GS}} = 17N.$ It means that a single sweep of \ref{algorithm:non_symmetric_GaBP} takes slightly fewer floating point operations than three sweeps of Gauss-Seidel smoother $\#_{\text{GaBP}_1}\sim 3\#_{\text{LEX GS}}$. For $M$ sweeps of GaBP one has $\#_{\text{GaBP}_M}= N(64M-12)$. In the context of multigrid, it is important to have a cheap smoother, but even $52N$ is too expensive. However, it is possible to reduce computational complexity by precomputing all required messages $\widetilde{\Lambda}$, which depend only on the matrix $\mathbf{A}$ and not on the right-hand-side vector. For $M$ sweeps of GaBP with precomputed $\widetilde{\Lambda}$, we have $\#^{\widetilde{\Lambda}}_{\text{GaBP}_M}= N(24M+8)$. We summarize  all these results regarding the complexity of GaBP in  Table~\ref{fig:computational_complexity}.
\section{Numerical examples}\label{Section:Numerical_examples}
In this section, we present numerical experiments with matrices that arise from second-order finite difference approximations of two-dimensional elliptic differential equations with $(x, y) \in \left[0, 1\right]^2$. The grid is assumed to be uniform and consists of $2^6 -1 $ inner points along each direction. We use Dirichlet boundary conditions in all the examples. These conditions are not specified directly and should be extracted from the exact solution. In the same vein, the form of the source term $g(x, y)$ (ride-hand side) can be derived from the exact solution and is not given explicitly. In all the experiments, the stopping criterion is $\left\|r\right\|_{\infty}\leq2\cdot 10^{-4}$. In the tables below, $N_{\text{it}}$ denotes the number of iterations and $N$ is the number of variables. Before we begin the main discussion, we summarize the main  properties of the solvers that are used.
\subsection{Note about solvers and smoothers}
In addition to the number of floating point operations (FLOP) (see \cref{fig:computational_complexity}), an important characteristic of a solver is its degree of parallelism, which is provided for various solvers  in the following table (for classical methods see e.g. \cite[ch. 6]{trottenberg2000multigrid}): \\ 
\begin{center}
    \begin{tabular}{c|c}
    Solvers &  Process in parallel\\\hline
    parallel GaBP, Jacobi & all point \\\hline
    sequential GaBP, GS & the single point \\\hline
    red-black GaBP, red-black GS & the half of all points \\\hline
    4-colors GaBP, 4-colors GS & the quarter of all points\\\hline
    $x-$ or $y-$GS & a single line\\\hline
    zebra-line GS, alternating zebra GS & a half of all lines\\\hline
    line GaBP & all lines \\\hline
    \end{tabular}
\end{center}
\vspace{0.5cm} 
Note that the line version of GaBP possesses a better degree of parallelism than GS versions. When we consider GaBP as a multigrid component, we always use V-cycle, bilinear restriction, and prolongation operators and LU as a coarse-grid solver. In our notation $V(J_1, J_2)$ means that the fine grid consists of $2^{J_1} - 1$ points, the coarse grid of $2^{J_1 - (J_2 - 1)} - 1$ points; numbers $(n, m)$ before the smoother name refer to the number of pre- and post-smoothing steps.
\subsection{GaBP as a stand-alone solver}
Classical relaxation methods are rarely used outside the AMG (algebraic multigrid) or GMG (geometric multigrid) to solve linear systems. Nevertheless, we present an example of their performance below. As a linear problem we use the following elliptic boundary value problem:
\begin{equation}\label{stand_alone}
    \begin{split}
        &\left(a(x, y)\frac{\partial^2}{\partial x^2} + b(x, y)\frac{\partial^2}{\partial y^2} + \alpha(x, y)\frac{\partial}{\partial x} + \beta(x, y)\frac{\partial}{\partial y}\right)\phi = g,\, (x, y)\in\left[0, 1\right]^2;\\
        &a = e^{-x(y+2)} + 10, \, \alpha = \cos\left(\pi\left(x + \frac{y}{2}\right)\right)\cos(2\pi x) + 4;\\
        &b = e^{-2x+2y}\cos^2\left(2\pi\left(2x+\frac{y}{2}\right)\right) + 3, \, \beta = e^{2x - 2y}
    \end{split}
\end{equation}
with $g(x,y)$ and boundary conditions chosen such that $\phi_{\text{exact}} = \cos(\pi x)\cos(\pi y)$ is the exact solution (this method of manufactured solutions is used in the remaining examples as well). The performance of various methods on this problem is shown in the following table:\\
\begin{center}
    \begin{tabular}{c|c|c}
    Solver &  $N_{\text{it}}$& FLOP, $10^3\cdot N$\\\hline
    sequential GaBP &$1548$ & $99$ \\\hline
    parallel GaBP & $3299$ & $211$ \\\hline
    GS & $3102$ & $53$\\\hline
    4-colors GS & $2620$ & $45$ \\\hline
    4-colors GaBP & $1865$ & $119$ \\\hline
    Jacobi & $4746$ & $81$ \\\hline
    error correction 4-colors GaBP $(3)$ & $706$ & $56$ \\\hline
    \end{tabular}
\end{center}
\vspace{0.5cm} 
The last line in the table corresponds to the three sweeps of the error correction scheme with precomputed messages $\Lambda$ (see \cref{Section:GaBP_as_a_smoother_for_the_multigrid_scheme.Subsection:Reducing_the_computational_complexity}). We see that GaBP does not provide particular advantages over classical relaxation methods as a stand-alone solver, even though there are some reports of its  excellent performance (see, e.g., \cite{weiss2000correctness}, \cite{bickson2008gaussian}). The main bottleneck here is the computational complexity of the scheme. To some extent, one can mitigate this problem by precomputing $\Lambda$ before the iteration process begins. Still, in this particular situation, both the 4-colors GS and Jacobi provide better alternatives due to their low cost and high degree of parallelism. We observe the analogous behavior for other elliptic problems.
\subsection{GaBP as a multigrid smoother}
As a rule, relaxation solvers become applicable to real large-scale problems and are competitive with projection methods only in the framework of multigrid schemes. We now present several situations that could potentially challenge state-of-the-art GMG smoothers. Note that in this section, we always use the error correction version of GaBP. Since we apply the same version of multigrid, we compute the FLOP score solely for the smoother and on the fine level only.
\subsection{Large mixed derivative}
The first equation of interest is of the form
\begin{equation}\label{mixed_term}
    \begin{split}
        &\left(\frac{\partial^2}{\partial x^2} + \frac{\partial^2}{\partial y^2} + \left(2-\epsilon\right)\frac{\partial^2}{\partial x\partial y}\right)\phi = g,~(x, y)\in\left[0, 1\right]^2,\\
        &\phi_{\text{exact}} = 2x^3 y^4.
    \end{split}
\end{equation}
The main problem here is that for small $\epsilon$, the ellipticity is almost lost. Below one can see the table with the best in terms of FLOP $V$-cycle solver of each kind:\\
\begin{center}
    \begin{tabular}{c|c|c||c|c}
        &\multicolumn{2}{|c||}{$\epsilon = 0.01$}& \multicolumn{2}{|c}{$\epsilon = -0.01$}\\\hline
        Solver, $V(6, 6)$ &  $N_{\text{it}}$& FLOP, $N$ & $N_{\text{it}}$& FLOP, $N$\\\hline
        4-color GaBP $(0, 4)$ &$23$ & $2392$ & $28$ & $2917$ \\\hline
        4-color GS  $(1, 1)$ &$70$ & $2380$ & $84$ & $2856$  \\\hline
        zebra-line GS $(0, 1)$ & $104$ & $2184$ & $127$ & $2667$  \\\hline
        alternating-zebra GS $(0, 1)$ & $64$ & $2688$ & $78$ & $3276$
    \end{tabular}
\end{center}
\vspace{0.5cm} 
We can see that the performance of GaBP is comparable with the 4-color GS smoother. So GaBP can be considered to be robust for the almost non-elliptic equations. We also stress that both 4-color GS and GaBP are preferable over the line smoothers for this problem because of their better degree of parallelism.

\subsection{Boundary layers}
The other practically relevant case that is a challenge for standard geometrical smoothers is an advection-diffusion problem in which advection dominates. We take as an example the following problem:
\begin{equation}\label{large_advection}
    \begin{split}
        &\left(-\epsilon\frac{\partial^2}{\partial x^2} - \epsilon\frac{\partial^2}{\partial y^2} + \frac{\partial}{\partial x} + \frac{\partial }{\partial y}\right)\phi(x, y) = 0,\,~(x, y)\in\left[0, 1\right]^2,\\
        &\phi_{\text{exact}} = \frac{2 e^{-1\big/\epsilon} - e^{(x-1)\big/\epsilon} - e^{(y-1)\big/\epsilon}}{e^{-1\big/\epsilon} - 1}.
    \end{split}
\end{equation}
As one can see, there are two boundary layers near $x=1$ and $y=1$, each of  width~$\sim \epsilon$. The solution is not large, but the derivative is $\sim 1\big/\epsilon$. The table below shows the performance results.\\
\begin{center}
    \begin{tabular}{c|c|c||c|c}
    &\multicolumn{2}{|c||}{$\epsilon = 0.02$}& \multicolumn{2}{|c}{$\epsilon = 0.01$}\\\hline
    Solver, $V(6, 6)$ &  $N_{\text{it}}$& FLOP, $N$ & $N_{\text{it}}$& FLOP, $N$\\\hline
    red-black GaBP $(5,0)$ & $5$ & $330$  & $3$  & $198$  \\\hline
    red-black GS  $\forall(n,m)$ & diverge &  & diverge &   \\\hline
    zebra-line GS $(2,2)$ & $5$ & $280$ & diverge &  \\\hline
    alternating-zebra GS $(1,1)$ & $4$ & $224$ & $3$ & $168$
    \\\hline
    line GaBP $(0,2)$ & $5$ & $325$ & $5$ & $325$
    \end{tabular}
\end{center}
\vspace{0.5cm} 
We observe two interesting trends. First, for $\epsilon = 0.02$, it is not possible to apply the red-black GS. However, it is still possible to construct a smoother from GaBP, using a sufficiently large number of sweeps. Second, the line GaBP smoother with a reasonable amount of steps performs nearly as well as the  alternating-zebra GS. Still, since the GABP can process all lines simultaneously, we conclude that it can outperform classical geometric smoothers for such advection-dominated elliptic problems. Furthermore, if the precomputation of $\Lambda$ is affordable from the perspective of the additional storage required, it is far better to use the red-black GaBP smoother.

\subsection{Inner layers}
We consider another advection-dominated diffusion problem
\begin{equation}\label{large_advection_2}
    \begin{split}
        &\left(\epsilon\frac{\partial^2}{\partial x^2} + \epsilon\frac{\partial^2}{\partial y^2} + x\frac{\partial}{\partial x} + y\frac{\partial }{\partial y}\right)\phi = g, \,(x, y)\in\left[0, 1\right]^2,\\
        &\phi_{\text{exact}} = e^{-(x+y-1)^2\big/\epsilon}.
    \end{split}
\end{equation}
This equation differs from (\ref{large_advection}) in two respects: 1) the solution has two inner layers, and 2) they are not aligned with the coordinate lines. Now the performance is as follows.\\
\begin{center}
    \begin{tabular}{c|c|c||c|c}
    &\multicolumn{2}{|c||}{$\epsilon = 0.015$}& \multicolumn{2}{|c}{$\epsilon = 0.01$}\\\hline
    Solver, $V(6, 6)$ &  $N_{\text{it}}$& FLOP, $N$ & $N_{\text{it}}$& FLOP, $N$\\\hline
    red-black GaBP $(3,0)$ & $7$ & $294$  & $13$  & $546$  \\\hline
    red-black GS  $\forall(n,m)$ & diverge &  & diverge &   \\\hline
    zebra-line GS $(2,0)$ & $9$ & $252$ & diverge &  \\\hline
    alternating-zebra GS $(1,1)$ & $4$ & $224$ & $5$ & $280$
    \\\hline
    line GaBP $(0,2)$ & $8$ & $520$ & $8$ & $520$
    \end{tabular}
\end{center}
\vspace{0.5cm} 
For this problem we can see the same pattern as for the previous example. The classical red-black solver is unable to smooth the error, whereas the GaBP-based color iteration scheme works fine. Additionally, due to its excellent degree of parallelism, the red-black GaBP significantly outperforms the  alternating-zebra GS smoother.

\subsection{Stretched grid}
Another situation of practical interest is given by the following problem:
\begin{equation}\label{stretched_grid}
    \begin{split}
        &\left(u\left(x|p, \eta\right)\frac{\partial^2}{\partial x^2} + u\left(y|p, \eta\right)\frac{\partial^2}{\partial y^2}\right)\phi = g, (x, y)\in\left[0, 1\right]^2,\\
        &u\left(x|p, \eta\right) =1 + \left(\left(x-\frac{1}{2}\right)^2 + \eta\right)^p\big/\epsilon,\\
        &\phi_{\text{exact}} = \cos(2\pi (x+y))\sin(2\pi (x-y)).
    \end{split}
\end{equation}
To understand this problem, consider the finite difference discretization of the Laplace equation on a grid which is highly concentrated near the edges of the domain. If one denotes $a_{\text{in}}$ and $a_{\text{bn}}$ to be characteristic scales of the coefficients, related to inner and boundary points respectively, the ratio $a_{\text{bn}}\big/a_{\text{in}}$ will be large. We achieve the same effect in equation~\eqref{stretched_grid} on the uniform grid by multiplying second derivatives by positive terms of the form $\left(1 + \left(\left(x-\frac{1}{2}\right)^2 + \eta\right)^p\big/\epsilon\right)$ which are approximately equal to $1$ inside the domain, but grow rapidly to large values near the boundaries. The table below shows thee results.\\
\begin{center}
    \begin{tabular}{c|c|c||c|c}
    &\multicolumn{2}{|c||}{$p, \eta, \epsilon = 20,1\big/2,10^{-6}$}& \multicolumn{2}{|c}{$p, \eta, \epsilon = 20,1\big/2,8\cdot 10^{-8}$}\\\hline
    Solver, $V(6, 6)$ &  $N_{\text{it}}$& FLOP, $N$ & $N_{\text{it}}$& FLOP, $N$\\\hline
    red-black GaBP $(3,0)$ & $18$ & $756$  & $23$  & $966$  \\\hline
    red-black GS  $(3,0)$ & $68$ & $1836$  & $97$  & $2619$  \\\hline
    zebra-line GS $(4,0)$ & $50$ & $2800$ & $72$ & $4032$  \\\hline
    alternating-zebra GS $(1,1)$ & $10$ & $560$ & $12$ & $672$
    \\\hline
    line GaBP $(0,2)$ & $20$ & $1300$ & $23$ & $1495$
    \end{tabular}
\end{center}
\vspace{0.5cm}
In this table, the first set of parameters $p, \eta, \epsilon = 20,1\big/2,10^{-6}$ corresponds to the linear stretching of the grid by a factor of $\sim 40$ and the second $p, \eta, \epsilon = 20,1\big/2,8\cdot10^{-8}$ to the linear stretching by a factor of $\sim 160$. We conclude that there is a version of the red-black GaBP with excellent convergence rate and good degree of parallelism. The same is true for the line GaBP smoother. Both of them can be used as an alternative to the classical alternating-zebra smoother.

\subsection{Comparison with a projection method}
For the sake of completeness, we also give an example of the performance of BiCGSTAB and GaBP-based multigrid for problem~\eqref{stand_alone}.\\
\begin{center}
    \begin{tabular}{c|c|c}
    Solver &  $N_{\text{it}}$& FLOP, $10^3\cdot N$\\\hline
    $V(6, 6)$, 4-color GaBP $(1, 1)$ & $21$ & $\sim 3$ \\\hline
    BiCGSTAB & $255$ & $\sim 38$ \\\hline
    \end{tabular}
\end{center}
\vspace{0.5cm} 
As one may have anticipated, the  projection method without a suitable preconditioner cannot outperform the  geometric multigrid.

Overall, based on the presented result, we conclude that different versions of GaBP perform either comparably well or better than the state-of-the-art smoothers for GMG. The main disadvantage of GaBP is its computational complexity. Even though with a precomputed $\Lambda$, one can substantially decrease the number of FLOPs, the cost of a single iteration is still higher than for the classical smoothers. However, its clear advantages are the robustness and the degree of parallelism. The former allows one to construct new point-based relaxation schemes for situations where classical point-based relaxation methods  fail. And the latter enables the line GaBP to outperform the alternating-zebra GS smoother.
\section{Conclusions}\label{Section:Conclusion}

In this paper, we have introduced a new class of solvers for linear systems that are  based on the generalized belief propagation algorithm. The solvers work for both symmetric and nonsymmetric  matrices. We show how to reduce the complexity of the resulting algorithm in comparison to that of the straightforward  application of the generalized belief propagation. A clear connection between the block LU decomposition and the new algorithm is established. Existing  proofs for symmetric systems are generalized to nonsymmetric systems, and two new proofs for a block version of the  GaBP are given. Furthermore, we show how to use the geometric multigrid to accelerate the GaBP, which with a precomputed $\widetilde{\Lambda}$  results in a robust solver with the same computational complexity as the one based on the Gauss-Seidel smoother.

We have demonstrated the performance of the new algorithm with several examples of boundary-value problems of varying complexity.  The  numerical experiments show that the GaBP is a competitive alternative to classical relaxation schemes. The reason for the good performance of GaBP is that it retains some information about all the previous stages, whereas the Gauss-Seidel, Jacobi, and Richardson solvers do not. Even though large computational overhead is a disadvantage of GaBP, the problem can be alleviated  within the framework of the multigrid scheme at the expense of additional storage and precomputation of some messages. Moreover, as part of the multigrid, the GaBP not only smooths  high-frequency components of the error, but also effectively decreases the low-frequencies. Our numerical experiments indicate that this feature promotes additional robustness. For example, the convergence rate of GaBP for anisotropic model elliptic problem given by $\left(\partial_x^2 + \epsilon\partial_y^2\right)u(x, y)=0$ does not depend on $\epsilon$, which is a somewhat unexpected result. Moreover, in the case of sharp inner and boundary layers, stretched grids, and large mixed derivatives, GaBP retains smoothing properties and performs better than the state-of-the-art geometrical smoothers.

The generalized GaBP introduced in the present work is in some sense a block version of the regular GaBP. Our numerical experiments show that the convergence rate increases with the size of the blocks such that after certain  scale the generalized GaBP can compete with Krylov subspace methods (see a numerical example at \cite{git_GaBP}). However, the computational cost increases as well. In practical applications, one should balance these two tendencies to construct an optimal solver. Some considerations about the choice of the blocks can be found in \cite{welling2004choice}, \cite{yedidia2001generalized}, but the issue is not yet fully resolved.

The generalized GaBP and GaBP as its particular case come from a domain of variational inference. To the best of our knowledge, there is currently no systematic analysis of the general relationship between deterministic/probabilistic inference and linear solvers. In our opinion, such a link may be useful for new interpretation of known techniques and provide insights that may lead to more efficient algorithms for numerical linear algebra.
\section*{Acknowledgement}
The author is indebted to Dr. Aslan Kasimov for valuable suggestions.

\bibliographystyle{plain}
\bibliography{refs.bib}

\begin{appendices}
\section{Consistency of GaBP}
\label{Appendices:Consistency_of_GaBP}
Here, following \cite{weiss2000correctness} we prove
\GaBPconsistency*
That is, if there is a steady state under mapping (\ref{nonsymmetric_GaBP_rules}), the solution given by the GaBP rules is exact.

We note that the proof in \cite{weiss2000correctness} also holds for the nonsymmetric case. We present a slightly different version of their reasoning, without referencing  graphical models for normal distribution.

The first concept that we need is a computation tree, which captures the order of operations under the GaBP iteration scheme. The computation tree contains copies of vertices and edges of the graph corresponding to $\mathbf{A}$. The matrix is supposed to be fixed so the computation tree depends on the root node $i$ and the number of steps $n$. We denote it by $T_n(x_i)$. To obtain $T_{n}(x_i)$ from $T_{n-1}(x_i)$, we consider each vertex $m\in\mathcal{V}_{T_{n-1}(x_i)}$ that has no incidence edges, find the corresponding variable on the graph of $A$, add to $T_{n-1}(x_i)$ copies of each neighbour $k$ of $m$ such that $e_{km}\in\mathcal{E}_{A}$ except for $l$ for which $e_{ml}\in\mathcal{E}_{T_{n-1}(x_i)}$. The example of the tree $T_3(x_1)$ is in figure \ref{fig:computation_tree:c}, the $T_2(x_1)$ in the dashed box exemplifies the recursion process.

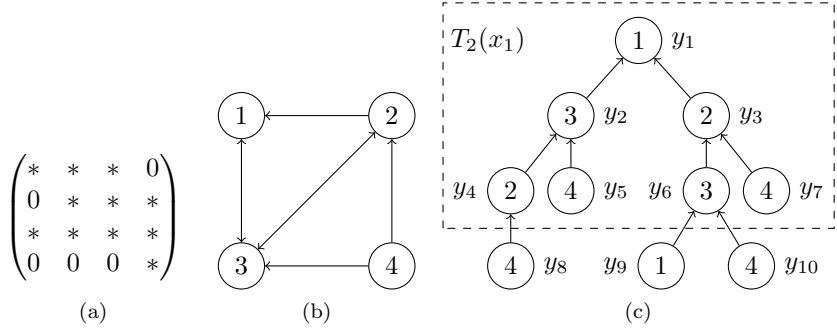
\begin{figure}
    \centering
    \subfloat[]{\label{fig:computation_tree:a}\begin{tikzpicture}[
sq/.style={circle, draw=black, minimum size=1mm},
]
\node[] at (0, 0) (1) {$\begin{pmatrix}*&*&*&0\\0&*&*&* \\ *&*&*&* \\ 0&0&0&* \end{pmatrix}$};
\end{tikzpicture}}\quad
    \subfloat[]{\label{fig:computation_tree:b}\begin{tikzpicture}[
sq/.style={circle, draw=black, minimum size=1mm},
]
\node[sq] at (0, 0) (1) {$1$};
\node[sq] at (2, 0) (2) {$2$};
\node[sq] at (0, -2) (3) {$3$};
\node[sq] at (2, -2) (4) {$4$};
\draw[->] (2) -- (1);
\draw[->] (4) -- (3);
\draw[->] (4) -- (2);
\draw[<->] (1) -- (3);
\draw[<->] (3) -- (2);
\end{tikzpicture}}\quad
    \subfloat[]{\label{fig:computation_tree:c}\begin{tikzpicture}[
sq/.style={circle, draw=black, minimum size=1mm},
]
\node[sq, label=east:$y_1$] at (0, 0) (1) {$1$};
\node[sq, label=east:$y_2$] at (-0.9, -1) (3) {$3$};
\node[sq, label=east:$y_3$] at (+0.9, -1) (2) {$2$};
\node[sq, label=west:$y_4$] at (-1.7, -2) (2_) {$2$};
\node[sq, label=east:$y_5$] at (-0.9, -2) (4) {$4$};
\node[sq, label=west:$y_6$] at (0.9, -2) (3_) {$3$};
\node[sq, label=east:$y_7$] at (1.7, -2) (4_) {$4$};
\node[sq, label=east:$y_8$] at (-1.7, -3) (4__) {$4$};
\node[sq, label=west:$y_9$] at (0.3, -3) (1_) {$1$};
\node[sq, label=east:$y_{10}$] at (1.5, -3) (4___) {$4$};
\node[] at (-2, 0) (empty) {$T_2(x_1)$};
\draw[->] (3) -- (1);
\draw[->] (2) -- (1);
\draw[->] (2_) -- (3);
\draw[->] (4) -- (3);
\draw[->] (3_) -- (2);
\draw[->] (4_) -- (2);
\draw[->] (4__) -- (2_);
\draw[->] (1_) -- (3_);
\draw[->] (4___) -- (3_);
\draw[dashed] (-2.6, -2.5) rectangle ++(5.2, 3);
\end{tikzpicture}}
    \caption{(a) -- matrix with nonzero elements denoted by $*$; (b) -- directed graph corresponding to the matrix. Note that by our convention $e_{ij}$ agrees with $A_{ji}$ not $A_{ij}$; (c) -- computation tree of depth $3$ for the first node $T_3(x_1)$ generated by a flood schedule. The subtree inside the box is $T_2(x_1)$.}
    \label{fig:computation_tree}
\end{figure}

By the construction of the computation tree, the following proposition is true.
\begin{prop}
    If $x_i^{(n)}$ is the solution on the $n$-th step of the algorithm~\ref{algorithm:non_symmetric_GaBP}, then it coincides with the one obtained after the elimination of all variables but $x_i$ (the root) from the computation tree $T_n(x_i)$.
\end{prop}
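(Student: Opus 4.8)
The plan is to argue by induction on $n$, but to strengthen the statement so that it pins down not only the final value $x_i^{(n)}$ but the meaning of every message en route. Recall from \cref{Section:Gaussian_belief_propagation.Subsection:GaBP_from_the_elimination_perspective} that the reparametrized messages in (\ref{nonsymmetric_GaBP_rules}) were introduced precisely as the corrections to a diagonal entry and to the right-hand side produced by eliminating a variable. The strengthened hypothesis $H(n)$ I would carry is: for every directed edge $j\to i$, the quantities $\widetilde{\Lambda}_{ji}^{(n)}A_{ji}$ and $m_{ji}^{(n)}$ equal the corrections $\Delta A_{ii}$ and $\Delta b_i$ obtained by exactly eliminating, inside the computation tree $T_n(x_i)$, the entire subtree hanging off the copy of $j$ adjacent to the root.

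For the base case $H(1)$, the tree $T_1(x_i)$ is the root together with leaf-copies of its neighbours; eliminating a single leaf $j$ from the corresponding two-equation system contributes $-A_{ij}A_{ji}/A_{jj}$ to $A_{ii}$ and $-A_{ij}b_j/A_{jj}$ to $b_i$, which are exactly $\widetilde{\Lambda}_{ji}^{(1)}A_{ji}$ and $m_{ji}^{(1)}$ when the initial messages are zero. For the inductive step I would invoke the recursive construction of the tree: the copy of $j$ in $T_n(x_i)$ has as children the copies of each $k\in N(j)\setminus i$, and the subtree below each such $k$-copy is isomorphic to the subtree rooted at a $k$-copy inside $T_{n-1}(x_j)$. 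By $H(n-1)$ these contribute corrections $\widetilde{\Lambda}_{kj}^{(n-1)}A_{kj}$ and $m_{kj}^{(n-1)}$, so after eliminating them the $j$-copy carries the effective diagonal $A_{jj}+\sum_{k\in N(j)\setminus i}\widetilde{\Lambda}_{kj}^{(n-1)}A_{kj}$ and effective right-hand side $b_j+\sum_{k\in N(j)\setminus i}m_{kj}^{(n-1)}$. Eliminating the $j$-copy itself then yields exactly the corrections prescribed by the update rules (\ref{nonsymmetric_GaBP_rules}), which establishes $H(n)$.

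With $H(n)$ in hand the statement is immediate: the algorithm reads off $x_i^{(n)}=\bigl(b_i+\sum_{j\in N(i)}m_{ji}^{(n)}\bigr)\big/\bigl(A_{ii}+\sum_{j\in N(i)}\widetilde{\Lambda}_{ji}^{(n)}A_{ji}\bigr)$, and by $H(n)$ the numerator and denominator are precisely the right-hand side and diagonal entry of the root equation after all the subtrees attached to its neighbours — that is, all of $T_n(x_i)$ except the root itself — have been eliminated. Hence $x_i^{(n)}$ coincides with the value obtained by eliminating every variable but the root from the computation tree.

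The main obstacle I anticipate is not the algebra of any single step but the bookkeeping that makes the strengthened hypothesis go through cleanly: one must verify that the rule generating $T_n(x_i)$ from $T_{n-1}(x_i)$ matches, branch for branch, the dependence of $m_{ji}^{(n)}$ on the messages $m_{kj}^{(n-1)}$, and that because the computation tree is loop-free the eliminations of the distinct subtrees hanging off a given node are independent, so that their corrections simply add. This additivity and order-independence of the corrections — a property special to trees — is exactly what licenses reading the accumulated messages as a single exact elimination, and it is the point that deserves the most care.
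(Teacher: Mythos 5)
Your induction is correct, and the invariant you carry (that $\widetilde{\Lambda}_{ji}^{(n)}A_{ji}$ and $m_{ji}^{(n)}$ are exactly the diagonal and right-hand-side corrections from eliminating the subtree of $T_n(x_i)$ hanging off the $j$-copy adjacent to the root) is precisely the elimination-as-messages correspondence the paper sets up in \cref{Section:Gaussian_belief_propagation.Subsection:GaBP_from_the_elimination_perspective}. The paper itself offers no explicit proof of this proposition --- it is asserted to hold ``by the construction of the computation tree'' --- so your argument is not a different route but the spelled-out version of exactly the justification the paper leaves implicit, including the two points that matter (the branch-for-branch match between the tree recursion and the dependence of $m_{ji}^{(n)}$ on $m_{kj}^{(n-1)}$, and the additivity of corrections from disjoint subtrees on a loop-free graph).
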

To relate the matrix $\mathbf{B}$ of the computation tree $T_{n}(x_i)$ to the matrix $\mathbf{A}$, we define the matrix $\mathbf{O}$ \cite[eq. 15]{weiss2000correctness} that connects original variables with copies
\begin{equation}
    \mathbf{y} = \mathbf{O}\mathbf{x}, \mathbf{d} = \mathbf{O}\mathbf{b},
\end{equation}
or, more precisely, $y_j$ is a copy of $x_i \Rightarrow O_{ji}=1$ and $\sum\limits_{i}O_{ji} = 1$. For example, matrix $\mathbf{O}$ for the tree in figure \ref{fig:computation_tree:c} is
\begin{equation}
    \mathbf{O}^T =
    \begin{pmatrix}
        1&0&0&0&0&0&0\\
        0&0&1&1&0&0&0\\
        0&1&0&0&0&1&0\\
        0&0&0&0&1&0&1\\
    \end{pmatrix}.
\end{equation}
Now it is not hard to establish the connection between $\mathbf{B}$ and $\mathbf{A}$ \cite[eq. 17]{weiss2000correctness}
\begin{equation}\label{error_matrix}
    \mathbf{B}\mathbf{O} + \mathbf{E} = \mathbf{O}\mathbf{A},
\end{equation}
where $\mathbf{E}$ is nonzero only for the subset of variables that $n$ steps away from the root node on the computation tree $T_{n}(x_i)$. The final part of the proof depends on the following statement \cite[Periodic beliefs lemma]{weiss2000correctness}.
\begin{prop}
    If there is $N\in\mathbb{N}$ such that $\widetilde{\mu}^{(N+k)}_{e} = \widetilde{\mu}^{(N)}_{e}$, $\widetilde{\Lambda}^{(N+k)}_{e} = \widetilde{\Lambda}^{(N)}_{e}$ for all $e\in\mathcal{E}$ and for any $k\in\mathbb{N}$, then it is possible to construct an arbitrary large computation tree $T_{M}(x_i)$ for any root node $x_i$ such that $\mathbf{O}\boldsymbol{\mu}^{(N)} = \widetilde{\mathbf{B}}^{-1}\widetilde{\mathbf{d}}$. Where $\widetilde{B}_{ij} \neq B_{ij}$ and $\widetilde{d}_i \neq d_i$ only for $i=j$ that are $M$ steps away from the root node.
\end{prop}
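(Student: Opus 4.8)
The plan is to realise the periodic messages as the exact elimination data of a single sweep on the computation tree, and then recover every nodal value from the GaBP belief formula in \eqref{nonsymmetric_GaBP_rules}. Fix a root $x_i$ and build $T_M(x_i)$ with $M$ arbitrarily large. Since a tree has no loops, solving any linear system on it by Gaussian elimination coincides with one pass of message passing, in which the quantity crossing a tree edge is exactly the correction $(\widetilde{\Lambda}, m)$ to the diagonal and right-hand side produced by eliminating the subtree lying on one side of that edge — this is the elimination/message dictionary of \cref{Section:Gaussian_belief_propagation.Subsection:GaBP_from_the_elimination_perspective}. The hypothesis that the messages are constant for $n\ge N$ means that $\widetilde{\Lambda}^{(N)}_{e}, m^{(N)}_{e}$ satisfy the GaBP \emph{fixed-point} relations \eqref{nonsymmetric_GaBP_rules}, and these fixed-point identities are precisely what the inductions below consume. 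I would first specify the modifications: for each leaf $l$, a copy of an original node $v$ whose tree-parent is a copy of $u\in N(v)$, all neighbours of $v$ other than $u$ are absent from the finite tree, so I inject their steady-state contributions by setting $\widetilde{B}_{ll} = A_{vv} + \sum_{k\in N(v)\setminus u}\widetilde{\Lambda}^{(N)}_{kv}A_{kv}$ and $\widetilde{d}_l = b_v + \sum_{k\in N(v)\setminus u}m^{(N)}_{kv}$, leaving $\mathbf{B}$ and $\mathbf{d}$ untouched at every non-leaf node. This respects the requirement that $\widetilde{\mathbf{B}}$ and $\widetilde{\mathbf{d}}$ differ from $\mathbf{B},\mathbf{d}$ only in the diagonal and entries of the nodes $M$ steps from the root.

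The core is to show, by two inductions, that every message produced while solving $\widetilde{\mathbf{B}}\mathbf{y}=\widetilde{\mathbf{d}}$ equals the corresponding steady-state message. For the upward (child-to-parent) messages I induct from the leaves: the injected $\widetilde{B}_{ll},\widetilde{d}_l$ force the message leaving a leaf to equal $(\widetilde{\Lambda}^{(N)}_{vu}, m^{(N)}_{vu})$ by the fixed-point relation, while an internal copy of $v$ has as children copies of all of $N(v)$ except its parent, so its outgoing message is assembled from exactly the neighbour set appearing in the GaBP rule and again reproduces the steady-state value. For the downward (parent-to-child) messages I induct from the root: the root is a copy of $i$ whose children realise all of $N(i)$, so its message to a child $c$ uses $N(i)\setminus\{c\}$ and equals $(\widetilde{\Lambda}^{(N)}_{ic}, m^{(N)}_{ic})$; an internal copy of $u$ then combines the steady-state message from its own parent with the steady-state upward messages from its remaining children, which together cover $N(u)\setminus\{v\}$, and hence emits the steady-state message to child $v$.

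Finally I would assemble the nodal values. Because the tree is loop-free, the value at any node of $\widetilde{\mathbf{B}}^{-1}\widetilde{\mathbf{d}}$ is the combination, through the belief formula of \eqref{nonsymmetric_GaBP_rules}, of the incoming messages from all its tree-neighbours. For an internal copy of $v$ these range over all of $N(v)$; for a leaf copy of $v$ the injected terms supply the missing neighbours and the parent supplies the remaining one; in both cases the value is $\bigl(b_v + \sum_{k\in N(v)} m^{(N)}_{kv}\bigr)\big/\bigl(A_{vv} + \sum_{k\in N(v)}\widetilde{\Lambda}^{(N)}_{kv}A_{kv}\bigr) = \mu^{(N)}_v$. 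Thus every copy of $v$ carries the same value $\mu^{(N)}_v$, that is $\widetilde{\mathbf{B}}^{-1}\widetilde{\mathbf{d}} = \mathbf{O}\boldsymbol{\mu}^{(N)}$, which is the assertion.

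I expect the main obstacle to be the downward induction. One must argue that the root, although it has no parent, still emits the correct steady-state messages; the resolution is that its children already realise its entire graph-neighbourhood, so the elimination recurrence agrees termwise with the GaBP update and no boundary term intrudes. The supporting bookkeeping — matching ``parent/child'' on the tree with ``excluded neighbour'' in the message rule, and verifying that the reparametrization $m=\widetilde{\Lambda}\,(\cdots)$ of \eqref{nonsymmetric_GaBP_messages} remains well defined along the edges with $A_{vu}=0$ — is the part that demands care.
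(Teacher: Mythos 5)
Your proof is correct, and it builds the same object as the paper --- the computation tree $T_M(x_i)$ with the leaf rows corrected by steady-state messages --- but it reaches $\widetilde{\mathbf{B}},\widetilde{\mathbf{d}}$ by a different route. The paper does not write the leaf corrections down explicitly: it grows the tree a further $N$ levels below every depth-$M$ node and then \emph{eliminates} the appended subtrees; since elimination of an $N$-deep subtree reproduces $N$ GaBP iterations (the earlier proposition relating $T_n(x_i)$ to the $n$-th iterate), the eliminated subtrees deposit exactly $\sum_{k}\widetilde{\Lambda}^{(N)}_{kv}A_{kv}$ and $\sum_{k}m^{(N)}_{kv}$ on the leaf diagonal and right-hand side, and the root value of the modified system is automatically $\mu_i^{(N)}$. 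You instead \emph{define} $\widetilde{B}_{ll}$ and $\widetilde{d}_l$ by those formulas and then verify, via an upward and a downward induction using the fixed-point form of (\ref{nonsymmetric_GaBP_rules}), that every edge of the exact tree solve carries a steady-state message. What your version buys is an explicit proof of the stronger clause of the proposition --- that \emph{every} copy of $x_v$ on the modified tree, not only the root, evaluates to $\mu_v^{(N)}$ --- which the paper asserts ("the crucial part here is that\dots the same is true for all the variables") but leaves implicit, as it would otherwise follow by re-rooting the tree at each node. What the paper's version buys is brevity: no separate induction is needed because the identification of tree elimination with GaBP iterations has already been established. The boundary cases you flag (the root having no parent but a complete neighbourhood; edges with $A_{vu}=0$ under the reparametrization (\ref{nonsymmetric_GaBP_messages}); natural leaves of degree one needing no injection) are handled correctly by your empty-sum conventions and do not cause gaps.
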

The crucial part here is that not only the solution for the root node coincides with the steady state solution of GaBP, but also the same is true for all the variables on the modified computation tree.

The proof is as follows. First, following the recursion procedure, we construct a computation tree of desired depth $M$. Then we continue to grow the tree till the subtrees of nodes $M$ steps away from the root reach the depth $N$ which corresponds to the steady state of GaBP. Now, elimination of subtrees results in the desired modified tree with the matrix $\widetilde{\mathbf{B}}$ and the right-hand side $\widetilde{\mathbf{d}}$.

Since we can construct an arbitrary modified computation tree, we can always get for arbitrary large $M$
\begin{equation}
    \widetilde{\mathbf{B}}\mathbf{O} = \mathbf{O}\mathbf{A}\text{ for the first }M\text{ rows.}
\end{equation}
And we know that by construction of the modified computation tree
\begin{equation}
    \widetilde{\mathbf{B}} \mathbf{O}\boldsymbol{\mu}^{(N)} = \widetilde{\mathbf{d}}.
\end{equation}
So we conclude that
\begin{equation}
    \mathbf{O} \mathbf{A}\boldsymbol{\mu}^{(N)} = \mathbf{O} \mathbf{b} \text{ for the first }M\text{ rows.}
\end{equation}
Note, that $\mathbf{O}^{T}\mathbf{O}$ is a diagonal matrix that counts the number of copies of each variable, therefore we can always choose $M$ large enough to make $\det\left(\mathbf{O}^{T}\mathbf{O}\right)\neq0 $ and $\mathbf{A}\boldsymbol{\mu}^{(N)} = \mathbf{b}$ which means that the iterative scheme defined by the algorithm~\ref{algorithm:non_symmetric_GaBP} is consistent. 

\section{Convergence of GaBP}\label{Appendices:Convergence_of_GaBP}
Here we present the version of the proof from \cite{malioutov2006walk} that extends to nonsymmetric matrices.
Our modifications are relatively minor, but for the sake of logical coherence, we reproduce here the minimal set of arguments from \cite{malioutov2006walk} tuning definitions and proposition when needed.
The main result of this section is

\GaBPconvergence*

The whole idea of the proof \cite{malioutov2006walk} is to relate GaBP operations with the recursive update of the weights of walks on the graph, corresponding to the matrix $\mathbf{A}$. For the start, we define a walk $w$ as a an ordered set of vertices $w = \left(i_1, i_2, \dots, i_{l(w)}\right)$ where $l(w)$ is a length of the walk $w$ and $\forall k<l(w) \Rightarrow e_{i_{k}i_{k+1}}\in\mathcal{E}$. Each walk possesses a weight
\begin{equation}\label{weight_of_the_walk}
    \phi(w) = A_{i_{l(w)} i_{l(w)-1}}\cdots A_{i_3 i_2}A_{i_2 i_1}.
\end{equation}
Note that the order is backward, which is a consequence of our definition of the directed graph. For the symmetric matrix, when the order is not essential, the equation (\ref{weight_of_the_walk}) coincides with the weight defined in \cite[3.1]{malioutov2006walk}. 
Now, if we have a system $\mathbf{A}\mathbf{x} = \mathbf{b}$, we can rescale it using $\widetilde{b}_{j} = b_j/A_{jj}$. This procedure is valid for any $\mathbf{A}$ with nonzero diagonal and results in the equivalent system
\begin{equation}\label{rescaled_system}
    \widetilde{\mathbf{A}} \mathbf{x} = \widetilde{\mathbf{b}},~ \widetilde{A}_{ij} = \delta_{ij} + \left(1 - \delta_{ij}\right)\frac{A_{ij}}{A_{ii}}\equiv \delta_{ij} - \widetilde{R}_{ij}
\end{equation}
It is possible to represent the solution of (\ref{rescaled_system}) in the form of Neumann series (see \cite[ch. 5]{Horn:2012:MA:2422911}) because $\rho\left(\widetilde{\mathbf{R}}\right)<1$ and therefore
\begin{equation}
    \widetilde{\mathbf{A}}^{-1} = \left(\mathbf{I} - \widetilde{\mathbf{R}}\right)^{-1} = \sum_{n=0}^{\infty}\widetilde{\mathbf{R}}^{n}.
\end{equation}
However, for being able to rearrange terms in the sum as necessary, which is sufficient to rewrite the inverse matrix using walks, one needs to require absolute convergence which is $\rho(\widetilde{|\mathbf{R}|})<1$. Having this condition it is not hard to prove \cite[Proposition 1, Proposition 5]{malioutov2006walk}
\begin{prop}
    If $\rho(\widetilde{|\mathbf{R}|})<1$, then $\widetilde{A}^{-1}_{ij} = \sum\limits_{w:j\rightarrow i}\phi(w)$ and $x^{\star}_i \equiv\left(\widetilde{\mathbf{A}}^{-1} \widetilde{\mathbf{b}}\right)_i = \sum\limits_{k\in \mathcal{V}} \sum\limits_{w:k\rightarrow i}\phi(w)\widetilde{b}_k.$
\end{prop}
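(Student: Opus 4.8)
The plan is to expand the inverse as a Neumann series and then reorganize the resulting double sum — over the power $n$ and over the index strings of length $n$ — into a single unordered sum over all walks. First I would invoke the expansion $\widetilde{\mathbf{A}}^{-1} = \sum_{n=0}^{\infty}\widetilde{\mathbf{R}}^{n}$, which is available because $\rho(\widetilde{\mathbf{R}})\le\rho(|\widetilde{\mathbf{R}}|)<1$ (the spectral radius is monotone under the entrywise absolute value), with $\widetilde{\mathbf{R}}$ as in (\ref{rescaled_system}).

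Next I would read off each matrix power combinatorially. Writing $(\widetilde{\mathbf{R}}^{n})_{ij}$ as $\sum \widetilde{R}_{i k_1}\widetilde{R}_{k_1 k_2}\cdots\widetilde{R}_{k_{n-1} j}$ exhibits it as a sum over walks $j\to k_{n-1}\to\cdots\to k_1\to i$ of exactly $n$ edges, each contributing the product of the rescaled off-diagonal entries along its edges, i.e. the weight $\phi(w)$ of (\ref{weight_of_the_walk}) evaluated on the rescaled system. Since $\widetilde{R}_{ab}$ vanishes unless $e_{ba}\in\mathcal{E}$, only genuine walks on the graph of $\mathbf{A}$ survive, and the backward index ordering matches the ordering of factors in (\ref{weight_of_the_walk}). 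Summing over $n$ then formally yields $\widetilde{A}^{-1}_{ij} = \sum_{w:j\to i}\phi(w)$, with the walks grouped first by length.

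The only real obstacle is legitimizing the passage from this length-graded double sum to the single, length-agnostic walk-sum: this requires the freedom to rearrange terms, hence absolute convergence, which is precisely why the hypothesis is $\rho(|\widetilde{\mathbf{R}}|)<1$ and not merely $\rho(\widetilde{\mathbf{R}})<1$. I would make this precise by dominating the walk-sum termwise, $\sum_{w:j\to i}|\phi(w)| \le \sum_{n=0}^{\infty}(|\widetilde{\mathbf{R}}|^{n})_{ij} = \big((\mathbf{I}-|\widetilde{\mathbf{R}}|)^{-1}\big)_{ij} < \infty$, the last inverse existing by the Neumann criterion applied to $|\widetilde{\mathbf{R}}|$. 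Absolute convergence then licenses any reordering or regrouping of the series, so the walk-sum is well defined and equals $\widetilde{A}^{-1}_{ij}$ independently of the order of summation.

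Finally, the solution formula follows by a one-line matrix–vector computation: $x^{\star}_i = (\widetilde{\mathbf{A}}^{-1}\widetilde{\mathbf{b}})_i = \sum_{k} \widetilde{A}^{-1}_{ik}\widetilde{b}_k = \sum_{k\in\mathcal{V}}\sum_{w:k\to i}\phi(w)\,\widetilde{b}_k$, substituting the first part and using finiteness of $\mathcal{V}$ to interchange the finite sum over $k$ with the walk-sums. I expect the combinatorial identification of matrix powers with walks to be routine; the one point deserving genuine care is the absolute-convergence justification of the rearrangement, which is the heart of why the stated spectral condition on $|\widetilde{\mathbf{R}}|$ is imposed.
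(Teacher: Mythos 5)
Your proposal is correct and follows essentially the same route as the paper: Neumann expansion of $\widetilde{\mathbf{A}}^{-1}=(\mathbf{I}-\widetilde{\mathbf{R}})^{-1}$, identification of $(\widetilde{\mathbf{R}}^{n})_{ij}$ with length-$n$ walks, the observation that $\rho(|\widetilde{\mathbf{R}}|)<1$ supplies the absolute convergence needed to pass from the length-graded sum to the unordered walk-sum, and a finite sum over $k$ for the solution formula (the paper defers these steps to Propositions 1 and 5 of Malioutov et al.). Your explicit justification that $\rho(\widetilde{\mathbf{R}})\le\rho(|\widetilde{\mathbf{R}}|)$ and your reading of $\phi(w)$ as the product of rescaled off-diagonal entries are both the intended interpretations.
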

Here, by $w:j\rightarrow i$ we mean the set of walks which start from the vertex $j$ and end at the vertex $i$. If one defines \cite[3.2]{malioutov2006walk} sets of single-visit $k\overset{\backslash i}{\rightarrow} i$ and single-revisit $i\overset{\backslash i}{\rightarrow} i$ walks by all walks which are not visiting the node $i$ in between given start and end points, the sum over walks can be decomposed \cite[eq. 12, 13; Proposition 9]{malioutov2006walk}
\begin{equation}\label{walk_sum_solution}
    x^{\star}_{i} = \left.\left(\widetilde{b}_i + \sum\limits_{k\in \mathcal{V}} \left[\widetilde{b}_{k}\sum\limits_{w:k\overset{\backslash i}{\rightarrow} i}\phi(w)\right]\right)\middle/\left(1 - \sum\limits_{w:i\overset{\backslash i}{\rightarrow} i}\phi(w)\right)\right..
\end{equation}
The decomposition follows from "topological" considerations alone which depend only on the structure of walks and not on the particular definition of the weight. The last result that we need is \cite[Lemma 18]{malioutov2006walk}
\begin{prop}
    For each finite length walk $k\rightarrow j$ on directed graph of the matrix $A$ there is $n$ and unique walk on the computation tree $T_n(x_i)$.
\end{prop}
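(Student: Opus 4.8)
The plan is to construct an explicit, weight-preserving lift that sends a finite graph walk ending at the root $i$ (the case $j=i$ that is needed in \eqref{walk_sum_solution}) to a walk on $T_n(x_i)$ ending at the root, and to read off existence and uniqueness from a single local statement about the recursive construction of the tree. The local statement I would isolate first is a correspondence of \emph{walk-predecessors}: for any copy $\widehat{m}$ of a vertex $m$ in $T_n(x_i)$, projection through $\mathbf{O}$ maps the tree-neighbours $\widehat{j}$ of $\widehat{m}$ with $B_{\widehat{m}\widehat{j}}\neq 0$ bijectively onto the graph-vertices $j$ with $A_{mj}\neq 0$. Indeed, by construction the children of $\widehat{m}$ are exactly the copies of $\{j : A_{mj}\neq 0\}$ other than the parent label $p$, and each carries $B_{\widehat{m}\widehat{j}} = A_{mj}\neq 0$; the parent $\widehat{p}$ satisfies $B_{\widehat{m}\widehat{p}} = A_{mp}$, which is nonzero precisely when $p$ itself lies in $\{j:A_{mj}\neq0\}$. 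Hence the admissible neighbours, with or without the parent, always carry exactly the label set $\{j:A_{mj}\neq0\}$, and the matrix entries agree with the corresponding entries of $\mathbf{A}$ by \eqref{error_matrix} away from the truncation boundary.

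Given this, I would lift a graph walk $w=(i_1,\dots,i_L)$ with $i_L=i$ by anchoring $\widehat{i}_L$ at the root and descending the indices: supposing $\widehat{i}_{m+1}$ is already placed as a copy of $i_{m+1}$, the walk condition $A_{i_{m+1}i_m}\neq0$ means $i_m$ is a walk-predecessor of $i_{m+1}$, so the local correspondence furnishes a unique tree-neighbour of $\widehat{i}_{m+1}$ projecting to $i_m$; I set $\widehat{i}_m$ to be it. This neighbour is the parent of $\widehat{i}_{m+1}$ exactly when the walk backtracks through the parent label (and then the walk condition itself guarantees the parent edge is active), and is one of the children otherwise. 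The resulting $\widehat{w}=(\widehat{i}_1,\dots,\widehat{i}_L)$ is a valid tree walk ending at the root, and since the total depth it visits never exceeds $l(w)-1$, any $n\ge l(w)$ yields a tree deep enough for $\widehat{w}$ to live inside $T_n(x_i)$; this supplies the $n$ claimed in the statement.

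For uniqueness I would induct from the root: any tree walk ending at the root that projects under $\mathbf{O}$ to $w$ must agree with $\widehat{w}$ term by term, because the local correspondence leaves a single admissible neighbour at each step. Finally, reading the correspondence in reverse shows that projecting an arbitrary tree walk ending at the root yields a graph walk ending at $i$, and $B_{\widehat{i}_{m+1}\widehat{i}_m}=A_{i_{m+1}i_m}$ gives $\phi(\widehat{w})=\phi(w)$; thus lift and projection are mutually inverse and weight-preserving, which is the form in which the lemma feeds \eqref{walk_sum_solution}.

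The step I expect to be the main obstacle is the local correspondence in the nonsymmetric setting: because the children of $\widehat{m}$ are selected using $A_{mj}$ while the weight of a step out of $\widehat{m}$ toward the parent is governed by $A_{mp}$, one must check that the parent edge is active exactly when the walk needs to traverse it, and that the directed convention $e_{ji}\Leftrightarrow A_{ij}\neq0$ is tracked consistently throughout (both points are vacuous in the symmetric case treated in \cite{malioutov2006walk}). The remaining care is bookkeeping the truncation: one must confirm that the flood schedule has completed $T_n(x_i)$ to depth $l(w)$ before the boundary term $\mathbf{E}$ in \eqref{error_matrix} can perturb any of the weights used along $\widehat{w}$.
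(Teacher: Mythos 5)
Your proof is correct, but it cannot be ``the same as the paper's'' because the paper does not actually prove this proposition: it is stated as a direct citation of \cite[Lemma 18]{malioutov2006walk}, with the implicit claim that the symmetric-case argument survives the passage to directed graphs. What you have written is therefore a genuine, self-contained replacement for that citation, and it targets exactly the point that changes in the nonsymmetric setting. In \cite{malioutov2006walk} the local neighbour correspondence is automatic because $A_{mj}\neq 0\Leftrightarrow A_{jm}\neq 0$, so a tree edge is traversable in a walk whenever it exists; here a copy $\widehat{m}$ is attached to its parent $\widehat{p}$ because $A_{pm}\neq 0$, while a walk step out of $\widehat{m}$ through that edge requires $A_{mp}\neq 0$, and your observation that the walk condition on the original graph supplies precisely this nonvanishing is the one nontrivial check. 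Your local bijection between tree-neighbours $\widehat{j}$ of $\widehat{m}$ with $B_{\widehat{m}\widehat{j}}\neq 0$ and graph-vertices $j$ with $A_{mj}\neq 0$ is exactly what the recursive construction of $T_n(x_i)$ together with \eqref{error_matrix} gives away from the truncation boundary, and it yields existence, uniqueness, and weight preservation of the lift in one stroke; anchoring at the root is also the right reading of the statement, since uniqueness fails for lifts ending at a non-root copy and only root-terminated walks enter \eqref{walk_sum_solution}. Two cosmetic remarks: the maximal depth visited by the lift of a walk with $l(w)$ vertices is $l(w)-1$, so $n\geq l(w)$ is (slightly more than) enough to keep the lifted walk clear of the rows where $\mathbf{E}$ is nonzero, as you note; and your closing statement that projection and lift are mutually inverse is the surjectivity half that the walk-sum rearrangement in the appendix actually consumes, so it is worth keeping even though the proposition as stated only asks for existence and uniqueness of the lift. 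What the paper's citation buys is brevity; what your argument buys is an explicit verification that the directed-edge convention $e_{ji}\in\mathcal{E}\Leftrightarrow A_{ij}\neq 0$ is threaded consistently through the tree construction, which is the only place the nonsymmetric generalization could silently fail.
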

Now, if we can relate update rules (\ref{nonsymmetric_GaBP_rules}) with the recursive structure of walks on a tree, the proof of the proposition \ref{prop:GaBP_convergence} is done.

On the tree, for each vertex $i$, the sum over single-revisit walks splits into sums over subtrees $T_{k\cup i}$, which are maximal connected parts that contain $i$ and among $N(i)$, only $k$. Then
\begin{equation}
    \sum\limits_{w:i\overset{\backslash i}{\rightarrow} i}\phi(w) = \sum_{k\in N(i)} \sum\limits_{\substack{w:i\overset{\backslash i}{\rightarrow} i\\w\in T_{k\cup i}}}\phi(w),
\end{equation}
but the sums over subtrees $T_{k\cup i}$ can be written as a sum over $T_{k\backslash i } \equiv T_{k\cup i}\backslash\left\{i\right\}$,
\begin{equation}\label{walks_update_one}
    \sum\limits_{\substack{w:i\overset{\backslash i}{\rightarrow} i\\w\in T_{k\cup i}}}\phi(w) = \frac{\widetilde{R}_{ki} \widetilde{R}_{ik}}{1 - \sum\limits_{\substack{w:k\overset{\backslash k}{\rightarrow} k\\w\in T_{k\backslash i}}}\phi(w)} = \frac{\widetilde{R}_{ki} \widetilde{R}_{ik}}{1 - \sum\limits_{m \in N(k)\backslash i} \sum\limits_{\substack{w:k\overset{\backslash k}{\rightarrow} k\\w\in T_{m\cup k}}}\phi(w)},
\end{equation}
where we used \cite[eq. 12, Proposition 9]{malioutov2006walk}
\begin{equation}
    \sum\limits_{w:k\rightarrow k}\phi(w) = \frac{1}{1 - \sum\limits_{w:k\overset{\backslash k}{\rightarrow} k}\phi(w)}.
\end{equation}
Using the definition of $\widetilde{\mathbf{R}}$, it is easy to see that if one denotes
\begin{equation}\label{precision_parametrization}
    -\frac{A_{ii}}{A_{ki}}\sum\limits_{\substack{w:i\overset{\backslash i}{\rightarrow} i\\w\in T_{k\cup i}}}\phi(w) = \widetilde{\Lambda}_{ki},
\end{equation}
then the update rule (\ref{walks_update_one}) coincides with the one for $\widetilde{\mathbf{\Lambda}}$ in (\ref{nonsymmetric_GaBP_rules}). Note that (\ref{precision_parametrization}) is well defined because if $A_{ki}=0$, there is no contribution from this particular subtree, and we do not need to use the walk from there.
In the same vein, the sum in the numerator of (\ref{walk_sum_solution}) can be decomposed
\begin{equation}
    \sum\limits_{k\in \mathcal{V}} \left[\widetilde{b}_{k}\sum\limits_{w:k\overset{\backslash i}{\rightarrow} i}\phi(w)\right] = \sum_{m\in N(i)} \sum\limits_{k\in T_{m\cup i}} \left[\widetilde{b}_{k}\sum\limits_{\substack{w:k\overset{\backslash i}{\rightarrow} i\\ w\in T_{m\cup i}}}\phi(w)\right].
\end{equation}
Again, using the sum over subtrees $T_{k\backslash i}$
\begin{equation}
    \sum\limits_{k\in T_{m\cup i}} \left[\widetilde{b}_{k}\sum\limits_{\substack{w:k\overset{\backslash i}{\rightarrow} i\\ w\in T_{m\cup i}}}\phi(w)\right] =  \widetilde{R}_{im} \sum\limits_{k\in T_{m\backslash i}} \left[\widetilde{b}_{k}\sum\limits_{\substack{w:k\rightarrow m\\ w\in T_{m\backslash i}}}\phi(w)\right],
\end{equation}
decomposition on single-visit walks \cite[eq. 13]{malioutov2006walk} and equations (\ref{precision_parametrization}), (\ref{walks_update_one}), we obtain
\begin{equation}\label{mean_parametrization}
    \begin{split}
    &\frac{\widetilde{\mu}_{mi}\widetilde{\Lambda}_{mi}}{A_{ii}} \equiv\sum\limits_{k\in T_{m\cup i}} \left[\widetilde{b}_{k}\sum\limits_{\substack{w:k\overset{\backslash i}{\rightarrow} i\\ w\in T_{m\cup i}}}\phi(w)\right] = \\ &= 
    \frac{\widetilde{\Lambda}_{mi}A_{mm}}{A_{ii}}\left(\widetilde{b}_m + \sum\limits_{l\in N(m)\backslash i}\sum\limits_{k\in T_{l \cup m}} \left[\widetilde{b}_{k}\sum\limits_{\substack{w:k\overset{\backslash m}{\rightarrow} m\\w\in T_{l\cup m}}}\phi(w)\right]\right).
    \end{split}
\end{equation}
The parameterization introduced in (\ref{mean_parametrization}) leads to the same update rule for $\widetilde{\boldsymbol{\mu}}$ as in (\ref{nonsymmetric_GaBP_rules}). With that, the sufficient condition, given in proposition \ref{prop:GaBP_convergence}, is established.

\section{Consistency of generalized GaBP}\label{Appendices:Consistency_of_generalized_GaBP}
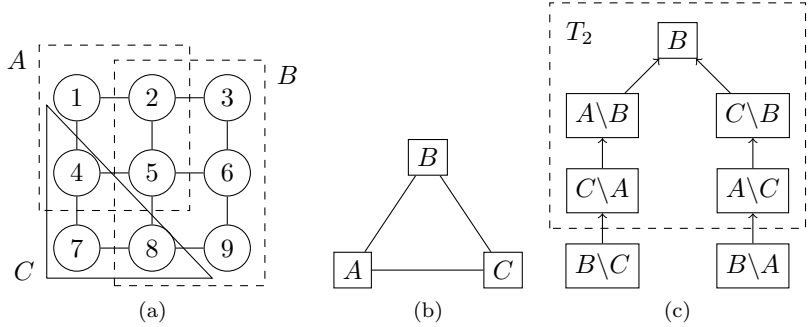
\begin{figure}
    \centering
    \subfloat[]{\label{fig:generalized_computation_tree:a}\begin{tikzpicture}[
sq/.style={circle, draw=black, minimum size=1mm},
]
\node[sq] at (0, 0) (1) {$1$};
\node[sq] at (1, 0) (2) {$2$};
\node[sq] at (2, 0) (3) {$3$};
\node[sq] at (0, -1) (4) {$4$};
\node[sq] at (1, -1) (5) {$5$};
\node[sq] at (2, -1) (6) {$6$};
\node[sq] at (0, -2) (7) {$7$};
\node[sq] at (1, -2) (8) {$8$};
\node[sq] at (2, -2) (9) {$9$};
\draw[-] (1) -- (2);
\draw[-] (1) -- (4);
\draw[-] (2) -- (3);
\draw[-] (2) -- (5);
\draw[-] (3) -- (6);
\draw[-] (4) -- (5);
\draw[-] (4) -- (7);
\draw[-] (5) -- (6);
\draw[-] (5) -- (8);
\draw[-] (6) -- (9);
\draw[-] (7) -- (8);
\draw[-] (8) -- (9);
\draw[dashed] (0.5, -2.5) rectangle ++(2, 3);
\draw[dashed] (-0.5, -1.5) rectangle ++(2, 2.2);
\draw[thin] (-0.4, -2.4) -- (-0.4, -0.1) -- (1.8, -2.4) -- (-0.4, -2.4);
\node[] at (-0.8, 0.5) (empty) {$A$};
\node[] at (2.8, 0.3) (empty) {$B$};
\node[] at (-0.7, -2.3) (empty) {$C$};
\end{tikzpicture}}\quad
    \subfloat[]{\label{fig:generalized_computation_tree:b}\begin{tikzpicture}[
sq/.style={rectangle, draw=black, minimum size=1mm},
]
\node[sq] at (0, 0) (1) {$A$};
\node[sq] at (1, 1.5) (2) {$B$};
\node[sq] at (2, 0) (3) {$C$};
\draw[-] (1) -- (2);
\draw[-] (2) -- (3);
\draw[-] (3) -- (1);
\end{tikzpicture}}\quad
    \subfloat[]{\label{fig:generalized_computation_tree:c}\begin{tikzpicture}[
sq/.style={rectangle, draw=black, minimum size=1mm},
]
\node[sq] at (0, 0) (1) {$A\backslash B$};
\node[sq] at (1, 1) (2) {$B$};
\node[sq] at (2, 0) (3) {$C\backslash B$};
\node[sq] at (2, -1) (3_) {$A\backslash C$};
\node[sq] at (0, -1) (1_) {$C\backslash A$};
\node[sq] at (2, -2) (3__) {$B\backslash A$};
\node[sq] at (0, -2) (1__) {$B\backslash C$};
\draw[->] (1) -- (2);
\draw[->] (3) -- (2);
\draw[->] (3_) -- (3);
\draw[->] (1_) -- (1);
\draw[->] (3__) -- (3_);
\draw[->] (1__) -- (1_);
\draw[dashed] (-0.7, -1.5) rectangle ++(3.4, 3);
\node[] at (-0.3, 1.1) (empty) {$T_2$};
\end{tikzpicture}}
    \caption{(a) -- partition of the original graph on large regions; (b) -- flat representation of the two-layer region graph; (c) -- computation tree for the generalized GaBP.}
    \label{fig:generalized_computation_tree}
\end{figure}
Here we prove that the two-layer generalized GaBP is consistent.
\generalizedGaBPconsistency*
The idea of the proof is the same as for the regular GaBP. One needs to relate, considering the operations of generalized GaBP, equations that the generalized GaBP solves during the $N$-th step, with the original system of linear equations, and then to show that those systems coincide for a sufficiently large $N$ if steady state exists.

To do so, we introduce a flat version of the region graph (an example is shown in figure \ref{fig:generalized_computation_tree:b}) that provides less detailed information about parent-child structure. The flat region graph is an undirected graph $\left\{\mathcal{V}, \mathcal{E}\right\}$, where $\mathcal{V}$ is the set of large regions and $\left(L, L^{'}\right)\in \mathcal{E}$ if $L$ and $L^{'}$ has at least one common child (the example is in figure \ref{fig:generalized_computation_tree:b}).

Now one can introduce the computation tree exactly in the same way as for GaBP. The only difference is that, because of an overlap between large regions, when we add a leaf node, we include overlapping variables to the root node. An example of the computation tree $T_3(B)$ as well as the $T_2(B)$ is in figure \ref{fig:generalized_computation_tree:c}. By construction of the computation tree, we know that the following is true.
\begin{prop}
    Elimination of all the variables on the computation tree $T_N(B)$ leads to the solution $\mathbf{x}_B$ that coincides with the one on the $N$-th step of generalized GaBP.
\end{prop}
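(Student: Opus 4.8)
The plan is to mirror the unrolling argument of \ref{Appendices:Consistency_of_GaBP}, replacing scalar corrections by block Schur complements. The key enabling fact, established in \cref{Section:Generalized_GaBP_solvers.Subsection:Elimination_perspective}, is that the message sent from region $j$ to region $i$, with overlap $k=i\cap j$ and private part $\overline{j}=j\setminus k$, is exactly the block-LU correction (\ref{single_region_corrections}) produced by eliminating $\mathbf{x}_{\overline{j}}$. Consequently the update rules (\ref{Two_layers_GaBP_rules}) are block-elimination steps, and the message accumulation in the inner loop of \cref{algorithm:non_symmetric_two_layers_GaBP} is the formation of the Schur complement with respect to all variables eliminated so far.

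First I would attach to each node of the computation tree $T_N(B)$ the block of variables named by its label (for instance $A\setminus B$ or $C\setminus A$ in \cref{fig:generalized_computation_tree:c}) and to each tree edge the shared-child coupling inherited from $\mathbf{A}$. As in (\ref{error_matrix}) this yields a block matrix $\mathbf{B}$ together with the duplication map $\mathbf{O}$, with $\mathbf{B}\mathbf{O}=\mathbf{O}\mathbf{A}$ holding on all interior rows and the defect confined to the leaves at distance $N$ from the root. Since $T_N(B)$ is a tree, block elimination of every non-root node is well defined and independent of the order in which the leaves are processed, so ``elimination of all the variables'' unambiguously leaves a single block equation for $\mathbf{x}_B$, whose solution is the tree's own $\widetilde{\boldsymbol{\Lambda}}_0^{-1}\widetilde{\mathbf{b}}_0$.

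The main step is induction on $N$. For $N=1$ the tree is $B$ together with the private parts of its flat-graph neighbours, and eliminating those parts toward the root reproduces the first-step messages through (\ref{single_region_corrections}). For the inductive step I would eliminate from the leaves inward: by the induction hypothesis, clearing the subtree below a neighbour of $B$ yields exactly the step-$(N-1)$ message that neighbour emits, and one further block-elimination layer -- again (\ref{Two_layers_GaBP_rules}) -- converts it into the step-$N$ message. Gathering these at $B$ produces $\widetilde{\boldsymbol{\Lambda}}_0$ and $\widetilde{\mathbf{b}}_0$ identical to those accumulated by the algorithm at iteration $N$, whence $\mathbf{x}_B=\widetilde{\boldsymbol{\Lambda}}_0^{-1}\widetilde{\mathbf{b}}_0$ coincides with the $N$-th iterate.

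I expect the delicate part to be the bookkeeping forced by the region overlaps. Unlike the scalar case, eliminating a child's private part $\overline{j}$ alters only the shared sub-block $k$ of its parent, so I must verify that the bracket operators $\left[\cdot\right]_L$ and $\left]\cdot\right[_l$ route each Schur correction into the correct block and that duplicated overlapping variables are counted consistently across the tree -- precisely what the counting-number condition (\ref{counting_condition}) guarantees when checking $\mathbf{B}\mathbf{O}=\mathbf{O}\mathbf{A}$. A second point to handle is that a vanishing coupling block $\mathbf{A}_{jk}=0$ makes $\boldsymbol{\Lambda}_{ji}$ singular; as noted in \cref{Section:Generalized_GaBP_solvers.Subsection:Elimination_perspective}, no correction reaches that block in this case and the reparametrization $\mathbf{m}_{Ll}\equiv\boldsymbol{\Lambda}_{Ll}\boldsymbol{\mu}_{Ll}$ keeps the elimination well defined throughout.
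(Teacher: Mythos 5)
Your argument is correct and is essentially the paper's own: the paper asserts this proposition ``by construction of the computation tree'' without writing out a proof, and your induction on $N$, resting on the elimination-perspective identification of messages with block Schur-complement corrections from \cref{Section:Generalized_GaBP_solvers.Subsection:Elimination_perspective} and (\ref{Two_layers_GaBP_rules}), is exactly the argument that construction is designed to make immediate. The only cosmetic remark is that the induction hypothesis should be stated for the messages emitted by subtrees rather than for the root solution alone (as your inductive step in fact uses), and that the relation $\mathbf{B}\mathbf{O}=\mathbf{O}\mathbf{A}$ belongs to the subsequent part of the consistency argument rather than to this proposition.
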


The relation between the matrix $\mathbf{B}$, corresponding to the computation tree, and the original matrix $\mathbf{A}$ is the same as in the equation (\ref{error_matrix}) if one introduces the matrix $\mathbf{O}$
\begin{equation}
    O_{ij} = 
    \begin{cases}
        1\text{ if } y_i \text{ is the copy of } x_j,\\
        0\text{ otherwise}.
    \end{cases}
\end{equation}
Here, $\mathbf{x}$ are variables on the graph of  matrix $\mathbf{A}$, and $\mathbf{y}$ are the ones on the computation tree.

Having the same relation between $\mathbf{A}$ and $\mathbf{B}$, we can repeat the rest of the proof, using the same arguments as in Section~\ref{Appendices:Consistency_of_GaBP}. So it follows that generalized GaBP is consistent and proposition \ref{prop:generalized_GaBP_Consistency} is true.

\section{Convergence of generalized GaBP}
\label{Appendices:Convergence_of_generalized_GaBP}
In this section, we present a sufficient condition for the  convergence of the two-layer generalized GaBP. 

\generalizedGaBPconvergence*

The proof consists of two parts. In the first one, we show that single-visit and single-revisit walks on a tree possess the same update rules as generalized GaBP messages. In the second part, we show that it is always possible to reorganize walks on the graph coming from the partition $F$ (see equations (\ref{partition}) and (\ref{matrix_partition})) to restore each walk on a computation tree.
\subsection{Walk structure on a tree}\label{Appendices:Convergence_of_generalized_GaBP:1}
To complete the first part, we define for a given partition $F$ (equation (\ref{partition})) of a matrix $\mathbf{A}$  the weight of a walk $w = \left(i_1 i_2\dots i_{L}\right)$ by the product of matrices
\begin{equation}
    \phi(w) = \widetilde{\mathbf{R}}_{i_{L}i_{L-1}} \cdots \widetilde{\mathbf{R}}_{i_3 i_2} \widetilde{\mathbf{R}}_{i_2 i_1}.
\end{equation}
In the view of the standard result \cite[ch. 8, Theorem 8.9]{amann2005analysis} on absolute convergence in complete finite metric spaces it is possible to rearrange terms of the sum, such that we can formulate the following statement.
\begin{prop}\label{prop:absolute_convergence}
    If $\rho\left(\left\|\widetilde{\mathbf{R}}\right\|\right)<1$, then $\left(\widetilde{\mathbf{A}}^{-1}\right)_{ii} = \sum\limits_{n=0}^{\infty}\left(\widetilde{\mathbf{R}}^{n}\right)_{ii} = \sum\limits_{w:i\rightarrow i}\phi(w)$, $\mathbf{x}_i\equiv \sum\limits_{j\in \mathcal{V}}\left(\widetilde{\mathbf{A}}^{-1}\right)_{ij}\widetilde{\mathbf{b}}_{j} = \sum\limits_{j\in \mathcal{V}} \sum\limits_{w:j\rightarrow i}\phi(w)\widetilde{\mathbf{b}}_j$.
\end{prop}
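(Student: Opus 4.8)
The plan is to mirror the scalar walk-sum argument of \ref{Appendices:Convergence_of_GaBP}, replacing scalar weights by the matrix-valued weights $\phi(w)$ and being careful at the two points where commutativity of scalars was implicitly used: the rescaling that produces $\widetilde{\mathbf{A}} = \mathbf{I} - \widetilde{\mathbf{R}}$, and the rearrangement of the Neumann series into a sum over walks. First I would rescale the block system. Since $\det\mathbf{A}_{ii}\neq 0$ for every diagonal block (the standing hypothesis of \cref{prop:Generalized_GaBP_convergence}), left-multiplying the $i$-th block row of $\mathbf{A}\mathbf{x}=\mathbf{b}$ by $\mathbf{A}_{ii}^{-1}$ yields the equivalent system $\widetilde{\mathbf{A}}\mathbf{x}=\widetilde{\mathbf{b}}$ with $\widetilde{\mathbf{A}}_{ij}=\mathbf{I}_{ij}-\widetilde{\mathbf{R}}_{ij}$ and $\widetilde{\mathbf{b}}_i=\mathbf{A}_{ii}^{-1}\mathbf{b}_i$, exactly as in (\ref{generalized_matrix_R}); equivalence holds because the block-diagonal multiplier $\mathrm{diag}(\mathbf{A}_{11}^{-1},\mathbf{A}_{22}^{-1},\dots)$ is invertible.

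The heart of the argument is to upgrade the spectral hypothesis on the \emph{nonnegative scalar} matrix $\left\|\widetilde{\mathbf{R}}\right\|$ to absolute convergence of the \emph{matrix} Neumann series $\sum_{n}\widetilde{\mathbf{R}}^{n}$. Expanding a block power over intermediate indices gives $\left(\widetilde{\mathbf{R}}^{n}\right)_{ij}=\sum \widetilde{\mathbf{R}}_{i\,i_{n-1}}\cdots\widetilde{\mathbf{R}}_{i_1 j}$, and applying the triangle inequality together with submultiplicativity of the chosen operator norm (\cite[Definition 5.6.3]{Horn:2012:MA:2422911}) yields the entrywise majorization $\left\|\left(\widetilde{\mathbf{R}}^{n}\right)_{ij}\right\|\le\left(\left(\left\|\widetilde{\mathbf{R}}\right\|\right)^{n}\right)_{ij}$. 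Because $\left\|\widetilde{\mathbf{R}}\right\|$ is a nonnegative matrix with $\rho\left(\left\|\widetilde{\mathbf{R}}\right\|\right)<1$, its own scalar Neumann series $\sum_{n}\left(\left\|\widetilde{\mathbf{R}}\right\|\right)^{n}=\left(\mathbf{I}-\left\|\widetilde{\mathbf{R}}\right\|\right)^{-1}$ converges entrywise and geometrically. Summing the majorization over $n$ then shows $\sum_{n}\left\|\left(\widetilde{\mathbf{R}}^{n}\right)_{ij}\right\|<\infty$ in every block, i.e. $\sum_{n}\widetilde{\mathbf{R}}^{n}$ converges absolutely in the (finite-dimensional, hence complete) space of block matrices. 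In particular $\widetilde{\mathbf{R}}^{n}\to 0$, so $\mathbf{I}-\widetilde{\mathbf{R}}$ is invertible and the limit equals $\left(\mathbf{I}-\widetilde{\mathbf{R}}\right)^{-1}=\widetilde{\mathbf{A}}^{-1}$.

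Absolute convergence is precisely the hypothesis of \cite[ch. 8, Theorem 8.9]{amann2005analysis} that permits arbitrary rearrangement of the terms, and this is what licenses the passage to walks. Each monomial $\widetilde{\mathbf{R}}_{i\,i_{n-1}}\cdots\widetilde{\mathbf{R}}_{i_1 j}$ is by definition the weight $\phi(w)$ of the walk $w=(j,i_1,\dots,i_{n-1},i)$ of length $n$, so $\left(\widetilde{\mathbf{R}}^{n}\right)_{ij}=\sum_{w:j\to i,\,l(w)=n}\phi(w)$; regrouping the absolutely convergent double sum over $n$ and over intermediate indices as a single sum over all walks gives $\left(\widetilde{\mathbf{A}}^{-1}\right)_{ij}=\sum_{w:j\to i}\phi(w)$. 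Taking $i=j$ yields the first claim, and applying $\mathbf{x}=\widetilde{\mathbf{A}}^{-1}\widetilde{\mathbf{b}}$ componentwise yields $\mathbf{x}_i=\sum_{j\in\mathcal{V}}\sum_{w:j\to i}\phi(w)\,\widetilde{\mathbf{b}}_j$, the second claim.

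I expect the main obstacle to be the non-commutativity of the matrix weights: unlike the scalar case of \ref{Appendices:Convergence_of_GaBP}, one cannot commute or factor the $\widetilde{\mathbf{R}}_{ij}$ freely, so the ordering inside $\phi(w)$ must be respected throughout and every rearrangement must be justified by the norm majorization rather than by an algebraic identity. Establishing that majorization — tying $\rho\left(\left\|\widetilde{\mathbf{R}}\right\|\right)<1$ to norm convergence of the matrix series through submultiplicativity — is therefore the one genuinely essential step; the subsequent walk bookkeeping is identical in form to the scalar argument and relies only on the combinatorial (\emph{topological}) structure of walks, not on commutativity.
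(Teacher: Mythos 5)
Your proposal is correct and takes essentially the same route as the paper: the paper's own ``proof'' consists of asserting absolute convergence and invoking the rearrangement result \cite[ch. 8, Theorem 8.9]{amann2005analysis}, and you reconstruct exactly that argument while supplying the one detail the paper leaves implicit, namely the entrywise majorization $\left\|\left(\widetilde{\mathbf{R}}^{n}\right)_{ij}\right\|\le\left(\left\|\widetilde{\mathbf{R}}\right\|^{n}\right)_{ij}$ via submultiplicativity, which links $\rho\left(\left\|\widetilde{\mathbf{R}}\right\|\right)<1$ to absolute convergence of the block Neumann series and hence to the walk-sum rearrangement. No gaps.
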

Here we used the same definition for the set of walks as in the Appendix~\ref{Appendices:Convergence_of_GaBP}. Again, \cite[eq. 12, 13]{malioutov2006walk} allows us to rewrite the diagonal blocks of the inverse matrix and the solution vector using single-visit and single-revisit walks
\begin{equation}
    \begin{split}
        &\left(\widetilde{\mathbf{A}}^{-1}\right)_{ii} = \left(\mathbf{I}_{ii} - \sum\limits_{w:i\overset{\backslash i}{\rightarrow} i}\phi(w)\right)^{-1},\\
        &\mathbf{x}_i = \left(\widetilde{\mathbf{A}}^{-1}\right)_{ii} \left(\widetilde{\mathbf{b}}_i + \sum\limits_{j\in \mathcal{V}}\sum\limits_{w:j\overset{\backslash i}{\rightarrow} i}\phi(w) \widetilde{\mathbf{b}}_j\right).
    \end{split}
\end{equation}
On the tree we can split the sums over  contributions from subtrees $T_{k\cup i}$ for each $k\in N(i)$. Therefore, from comparison with algorithm \ref{algorithm:non_symmetric_two_layers_GaBP}, we can deduce that
\begin{equation}\label{generalized_message_parametrization}
    \sum_{j\in T_{k\cup i}} \sum\limits_{\substack{w:j\overset{\backslash i}{\rightarrow} i\\ w\in T_{k\cup i}}}\phi(w)\widetilde{\mathbf{b}}_j = \left(\mathbf{A}_{ii}\right)^{-1}\mathbf{m}_{ki},~\sum\limits_{\substack{w:i\overset{\backslash i}{\rightarrow} i\\ w\in T_{k\cup i}}}\phi(w) = - \left(\mathbf{A}_{ii}\right)^{-1}\mathbf{\Lambda}_{ki}.
\end{equation}
Messages in algorithm \ref{algorithm:non_symmetric_two_layers_GaBP} propagate along edges of the region graph, whereas messages that we have just defined flow along edges of a graph of the matrix $\widetilde{\mathbf{R}}$. To have a more straightforward connection between them, we consider $\widetilde{\mathbf{R}}$ as a matrix originates from the computation tree itself. Under this set of circumstances, there is a one-to-one correspondence between messages (\ref{generalized_message_parametrization}) and the ones in algorithm~\ref{algorithm:non_symmetric_two_layers_GaBP}.

For single-revisit walks, one has
\begin{equation}\label{generalized_update_lambda}
    \begin{split}
        \sum\limits_{\substack{w:i\overset{\backslash i}{\rightarrow} i\\ w\in T_{k\cup i}}}\phi(w) &= \left(\mathbf{A}_{ii}\right)^{-1} \mathbf{A}_{ik} \sum\limits_{\substack{w:k\rightarrow k\\ w\in T_{k\backslash i}}}\phi(w) \left(\mathbf{A}_{kk}\right)^{-1}\mathbf{A}_{ki} = \\
        &= \left(\mathbf{A}_{ii}\right)^{-1} \mathbf{A}_{ik}\left(\mathbf{I}_{kk} - \sum\limits_{\substack{w:k\overset{\backslash k}{\rightarrow} k\\ w\in T_{k\backslash i}}}\phi(w) \right)^{-1}\left(\mathbf{A}_{kk}\right)^{-1}\mathbf{A}_{ki} \Rightarrow \\
        &\Rightarrow \mathbf{\Lambda}_{ki} = -\mathbf{A}_{ik} \left(\mathbf{A}_{kk} + \sum\limits_{m\in N(k)\backslash i}\mathbf{\Lambda}_{mk}\right)^{-1}\mathbf{A}_{ki}.
    \end{split}
\end{equation}
If we consider the following matrix
\begin{equation}
    \left(\begin{pmatrix}
        0 & \mathbf{A}_{ik}\\
        \mathbf{A}_{ki} & \left(\mathbf{A}_{kk} + \sum\limits_{m\in N(k)\backslash i} \mathbf{\Lambda}_{mk}\right)\\
    \end{pmatrix}^{-1}\right)_{ii} = \mathbf{\Lambda}_{ki}^{-1},
\end{equation}
one can immediately see that update rules (\ref{generalized_update_lambda}) indeed coincide with (\ref{rough_message_update}).

For single-visit walks, we have
\begin{equation}
    \begin{split}
        &\sum_{j\in T_{k\cup i}} \sum\limits_{\substack{w:j\overset{\backslash i}{\rightarrow} i\\ w\in T_{k\cup i}}}\phi(w)\widetilde{\mathbf{b}}_j = -\left(\mathbf{A}_{ii}\right)^{-1}\mathbf{A}_{ik} \sum_{j\in T_{k\backslash i}} \sum\limits_{\substack{w:j\rightarrow k\\ w\in T_{k\backslash i}}}\phi(w)\widetilde{\mathbf{b}}_j =\\
        &=-\left(\mathbf{A}_{ii}\right)^{-1} \mathbf{A}_{ik}\left(\mathbf{I}_{kk} - \sum\limits_{\substack{w:k\overset{\backslash k}{\rightarrow} k\\ w\in T_{k\backslash i}}}\phi(w) \right)^{-1}\left(\widetilde{\mathbf{b}}_{k} + \sum_{j\in T_{k\backslash i}} \sum\limits_{\substack{w:j\overset{\backslash k}{\rightarrow} k\\ w\in T_{k\backslash i}}}\phi(w)\widetilde{\mathbf{b}}_j\right),
    \end{split}
\end{equation}
or using (\ref{generalized_message_parametrization}), we get
\begin{equation}\label{generalized_update_mu}
    \mathbf{m}_{ki} = -\mathbf{A}_{ik}\left(\mathbf{A}_{kk} + \sum\limits_{m\in N(k)\backslash i}\mathbf{\Lambda}_{mk}\right)^{-1}\left(\mathbf{b}_{k} + \sum\limits_{p\in N(k)\backslash i} \mathbf{m}_{pk}\right).
\end{equation}
Since $\mathbf{m}_{ki} = \mathbf{\Lambda}_{ki}\boldsymbol{\mu}_{ki}$ and
\begin{equation}
   \boldsymbol{\mu}_{ki} = \left(\begin{pmatrix}
        0 & \mathbf{A}_{ik}\\
        \mathbf{A}_{ki} & \left(\mathbf{A}_{kk} + \sum\limits_{m\in N(k)\backslash i} \mathbf{\Lambda}_{mk}\right)\\
    \end{pmatrix}^{-1}
    \begin{pmatrix}
    0\\
    \mathbf{b}_k + \sum\limits_{p\in N(k)\backslash i} \mathbf{m}_{pk}
    \end{pmatrix}\right)_{ii}
\end{equation}
we recover update rules (\ref{rough_message_update}). So we conclude that on the computation tree update rules for the two-layer generalized GaBP coincide with recursive relations for the single-visit and single-revisit walks.
\subsection{Walk-sums and the graph refinement}\label{Appendices:Convergence_of_generalized_GaBP:2}
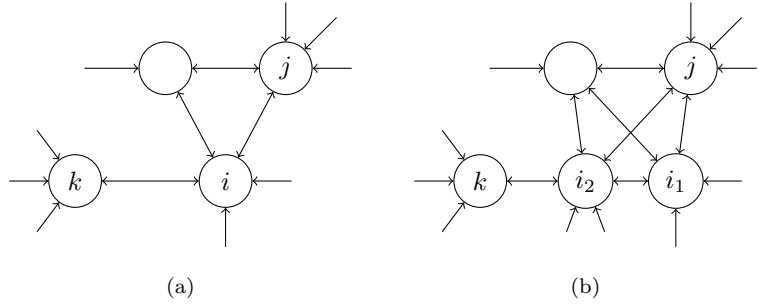
\begin{figure}
    \centering
    \subfloat[]{\label{fig:graph_refinement:a}\begin{tikzpicture}[
sq/.style={circle, draw=black, minimum size=7mm},
]
\node[sq] at (0, 0) (1) {$i$};
\node[sq] at (-0.8, 1.5) (2) {};
\node[sq] at (0.8, 1.5) (3) {$j$};
\node[sq] at (-2, 0) (4) {$k$};
\node[] at (-3, 0) (empty_4) {};
\node[] at (-2.6, 0.8) (empty_4_) {};
\node[] at (-2.6, -0.8) (empty_4__) {};
\node[] at (1, 0) (empty_1) {};
\node[] at (0, -1) (empty_1_) {};
\node[] at (-2, 1.5) (empty_2) {};
\node[] at (1.8, 1.5) (empty_3) {};
\node[] at (0.8, 2.5) (empty_3_) {};
\node[] at (1.6, 2.3) (empty_3__) {};
\draw[<->] (1) -- (2);
\draw[<->] (1) -- (3);
\draw[<->] (2) -- (3);
\draw[<->] (1) -- (4);
\draw[<-] (4) -- (empty_4__);
\draw[<-] (4) -- (empty_4_);
\draw[<-] (4) -- (empty_4);
\draw[<-] (1) -- (empty_1_);
\draw[<-] (1) -- (empty_1);
\draw[<-] (2) -- (empty_2);
\draw[<-] (3) -- (empty_3);
\draw[<-] (3) -- (empty_3_);
\draw[<-] (3) -- (empty_3__);
\end{tikzpicture}}\quad
    \subfloat[]{\label{fig:graph_refinement:b}\begin{tikzpicture}[
sq/.style={circle, draw=black, minimum size=7mm},
]
\node[sq] at (0.6, 0) (1) {$i_1$};
\node[sq] at (-0.6, 0) (_1) {$i_2$};
\node[sq] at (-0.8, 1.5) (2) {};
\node[sq] at (0.8, 1.5) (3) {$j$};
\node[sq] at (-2, 0) (4) {$k$};
\node[] at (-3, 0) (empty_4) {};
\node[] at (-2.6, 0.8) (empty_4_) {};
\node[] at (-2.6, -0.8) (empty_4__) {};
\node[] at (1.6, 0) (empty_1) {};
\node[] at (0.6, -1) (empty_1_) {};
\node[] at (-0.3, -0.8) (empty__1) {};
\node[] at (-0.9, -0.8) (empty__1_) {};
\node[] at (-2, 1.5) (empty_2) {};
\node[] at (1.8, 1.5) (empty_3) {};
\node[] at (0.8, 2.5) (empty_3_) {};
\node[] at (1.6, 2.3) (empty_3__) {};
\draw[<->] (1) -- (2);
\draw[<->] (1) -- (3);
\draw[<->] (_1) -- (2);
\draw[<->] (_1) -- (3);
\draw[<->] (2) -- (3);
\draw[<->] (_1) -- (4);
\draw[<->] (1) -- (_1);
\draw[<-] (4) -- (empty_4__);
\draw[<-] (4) -- (empty_4_);
\draw[<-] (4) -- (empty_4);
\draw[<-] (1) -- (empty_1_);
\draw[<-] (1) -- (empty_1);
\draw[<-] (2) -- (empty_2);
\draw[<-] (3) -- (empty_3);
\draw[<-] (3) -- (empty_3_);
\draw[<-] (3) -- (empty_3__);
\draw[<-] (_1) -- (empty__1);
\draw[<-] (_1) -- (empty__1_);
\end{tikzpicture}}
    \caption{(a) -- graph of the matrix $\mathbf{A}$, each node corresponds to the diagonal block; (b) -- refined version of (a), submatrix $\mathbf{A}_{ii}$ is split by four blocks $\mathbf{A}_{i_1i_1}, \mathbf{A}_{i_1i_2}, \mathbf{A}_{i_2i_1},\mathbf{A}_{i_2i_2}$.}
    \label{fig:graph_refinement}
\end{figure}
The second part of the proof establishes the connection between sets of walks on the graph of the matrix $\widetilde{\mathbf{R}}$ and walks on the computation tree. First, for the matrix (\ref{matrix_partition}) we split a single region $i$ into two parts $i_1$ and $i_2$
\begin{equation}\label{refined_matrix_partition}
    \mathbf{A} = 
    \begin{pmatrix}
        \mathbf{A}_{i_{1}i_{1}}&\mathbf{A}_{i_{1}i_{2}}&\mathbf{A}_{i_{1}j}&\hdots\\
        \mathbf{A}_{i_{2}i_{1}}&\mathbf{A}_{i_{2}i_{2}}&\mathbf{A}_{i_{2}j}&\hdots\\
        \mathbf{A}_{ji_{1}}&\mathbf{A}_{ji_{2}}&\mathbf{A}_{jj}&\hdots\\
        \vdots&\vdots&\vdots&\ddots\\
    \end{pmatrix},
    \mathbf{b} = 
    \begin{pmatrix}
        \mathbf{b}_{i_1}\\
        \mathbf{b}_{i_2}\\
        \mathbf{b}_{j}\\
        \vdots\\
    \end{pmatrix}.
\end{equation}
The transformation of the graph is in figure \ref{fig:graph_refinement}. We refer to this procedure as to the elementary refinement of the region $i$. From the construction of the refined matrix $\mathbf{A}$, the following proposition holds.
\begin{prop}
    There is a one-to-one correspondence between walks on the graph of $\widetilde{\mathbf{R}}$ and the one obtained by the elementary refinement of the region $i$ excluding three situations: 1) walk crosses $i$, 2) walk ends at $i$, 3) walk starts at~$i$.
\end{prop}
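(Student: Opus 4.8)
The plan is to realize the correspondence through the natural collapse map $\pi$ that sends both refined blocks $i_1,i_2$ back to the single block $i$ and fixes every other block, and then to verify that $\pi$ is a weight-preserving bijection precisely on the walks that never visit $i$ — which is exactly the complement of the three listed families. First I would record how the entries of $\widetilde{\mathbf{R}}$ transform under the elementary refinement. For any ordered pair of blocks $a,b$ with $a,b\notin\{i_1,i_2\}$, the entry is $\widetilde{\mathbf{R}}_{ba}=-\mathbf{A}_{bb}^{-1}\mathbf{A}_{ba}$, which depends only on the diagonal block $\mathbf{A}_{bb}$ and the coupling $\mathbf{A}_{ba}$; none of these is altered when $\mathbf{A}_{ii}$ is split into the four subblocks $\mathbf{A}_{i_1i_1},\mathbf{A}_{i_1i_2},\mathbf{A}_{i_2i_1},\mathbf{A}_{i_2i_2}$. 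Hence the subgraph induced on all nodes other than $i$ (resp.\ $i_1,i_2$) is identical in the two graphs, edge for edge and weight for weight.

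Consequently, if a walk $w$ never visits the block $i$, then $w$ is literally a walk on both graphs and carries the same weight $\phi(w)$, so $\pi$ restricts to the identity on this set of walks. This already supplies the bijection claimed for the complement of the three exceptional families, since any walk that does touch $i$ must either start at $i$, end at $i$, or cross it as an interior vertex.

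Next I would explain why these three families are genuinely excluded, i.e.\ why $\pi$ cannot be upgraded to a weight-preserving bijection there, and two independent obstructions appear as soon as a walk meets $i$. The first is combinatorial: a refined walk may use an internal step $i_1\to i_2$ or $i_2\to i_1$, whose image under $\pi$ is the pair $i\to i$; but this is not an edge of the coarse graph, since $\widetilde{\mathbf{R}}_{ii}=0$ by construction, so the projected sequence is not even a walk. The second concerns the weights: the factors incident to $i$ are renormalized by different diagonal blocks in the two graphs — a coarse crossing $k\to i\to j$ carries the full inverse $\mathbf{A}_{ii}^{-1}$ through the factor $\widetilde{\mathbf{R}}_{ik}=-\mathbf{A}_{ii}^{-1}\mathbf{A}_{ik}$, whereas each refined landing on $i_1$ or $i_2$ carries only $\mathbf{A}_{i_1i_1}^{-1}$ or $\mathbf{A}_{i_2i_2}^{-1}$, and the boundary weight $\widetilde{\mathbf{b}}_i=\mathbf{A}_{ii}^{-1}\mathbf{b}_i$ likewise differs from $\widetilde{\mathbf{b}}_{i_1},\widetilde{\mathbf{b}}_{i_2}$. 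These discrepancies live exactly at a start, an end, or an interior visit of $i$, so a single coarse walk through $i$ corresponds not to one refined walk but to the whole family of refined excursions that enter the block, perform arbitrarily many internal $i_1\leftrightarrow i_2$ steps, and leave.

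Finally, since away from the three families the map $\pi$ is the identity and preserves weights, it is a bijection there, which is all the proposition asserts. I expect the only real care to lie in pinning down the two obstructions above and in checking that the induced subgraph on the untouched blocks is unchanged; the substantive quantitative statement — that the excluded excursions reassemble into the Neumann expansion of $\mathbf{A}_{ii}^{-1}$, so that the \emph{walk sums} agree even though the individual walks do not biject — is not needed for this lemma and is deferred to the walk-sum identity used in the remainder of \ref{Appendices:Convergence_of_generalized_GaBP:2}.
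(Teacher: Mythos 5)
Your argument is correct and is essentially the paper's own (very terse) justification made explicit: the paper simply asserts the proposition ``from the construction of the refined matrix,'' and your observation that the induced subgraph on all blocks other than $i$ is unchanged edge-for-edge and weight-for-weight, so that walks avoiding $i$ are literally identical in both graphs, is exactly the content behind that assertion. Your identification of the two obstructions at visits to $i$ (the forbidden projected step $i\to i$ and the mismatch between $\mathbf{A}_{ii}^{-1}$ and $\mathbf{A}_{i_1i_1}^{-1},\mathbf{A}_{i_2i_2}^{-1}$) correctly anticipates, and rightly defers, the walk-sum repair the paper carries out afterwards for the three excluded families.
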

We discuss each of these situations separately. First, we need to introduce a new notation. Let $k\overset{M}{\longrightarrow} l$ be the set of walks, where each walk starts from $k$, ends at $l$ and newer leaves the subset $M$. It is easy to see that on the refined graph
\begin{equation}\label{subwalks_on_refined_graph}
    \phi\left(k\overset{\left\{i_1, i_2\right\}}{\longrightarrow} l\right) = \left(\left(\mathbf{A}_{ii}\right)^{-1}\right)_{l k}\mathbf{A}_{kk},\text{ where }l,k=\left\{i_1,i_2\right\}.
\end{equation}

\begin{itemize}
    \item Walk on $\widetilde{\mathbf{R}}$ that crosses $i$  has a form $w_{\text{cross}} = \left(\dots j i k \dots\right)$ (see figure \ref{fig:graph_refinement:a}). The weight of this walk is
\begin{equation}
    \phi(w_{\text{cross}}) = \cdots \left(\mathbf{A}_{kk}\right)^{-1}\mathbf{A}_{ki}\left(\mathbf{A}_{ii}\right)^{-1} \mathbf{A}_{ij} \cdots.
\end{equation}
On the refined graph we can consider the set of all walks that coincides with $w$ outside $i$. The sum  of weight of all these walks is
\begin{equation}
    \phi(w)_{\text{refined}} = \sum\limits_{l, k \in \left\{i_1, i_2\right\}}\cdots \left(\mathbf{A}_{kk}\right)^{-1} \mathbf{A}_{kl}\phi\left(l\overset{\left\{i_1, i_2\right\}}{\longrightarrow} k\right) \left(\mathbf{A}_{ll}\right)^{-1}\mathbf{A}_{lj}\cdots.
\end{equation}
We see that due to equation (\ref{subwalks_on_refined_graph}), weights are the same.

\item Walk on $\widetilde{\mathbf{R}}$ that ends at $i$ has a form $w_{\text{end}}=\left(\dots j i\right)$ and a weight
\begin{equation}
    \phi(w) = \left(\mathbf{A}_{ii}\right)^{-1}\mathbf{A}_{ij}\cdots.
\end{equation}
On the refined graph we have two set of walks 
\begin{equation}
    \phi(w)^{p}_{\text{refined}} = \sum\limits_{l\in\left\{i_1, i_2\right\}}\phi\left(l\overset{\left\{i_1, i_2\right\}}{\longrightarrow} p\right)\left(\mathbf{A}_{ll}\right)^{-1}\mathbf{A}_{lj}\cdots,~p\in\left\{i_1, i_2\right\}
\end{equation}
that can be combined to have the same weight. Namely, using (\ref{subwalks_on_refined_graph}) we find that
\begin{equation}
    \left[\left(\mathbf{A}_{ii}\right)^{-1}\mathbf{A}_{ij}\cdots\right]_{l\star} = \left(\phi(w)^{l}_{\text{refined}}\right)_{\star},~l=\left\{i_1, i_2\right\}.
\end{equation}

\item Walk on $\widetilde{\mathbf{R}}$ that starts at $i$ has a form $w_{\text{start}}=\left(i j \dots\right)$ and a weight
\begin{equation}
    \phi(w_{\text{start}}) = \cdots\left(\mathbf{A}_{jj}\right)^{-1}\mathbf{A}_{ji}.
\end{equation}
It is possible to relate this walk to two sets of walks $w_1 = (i_1j\dots)$, $w_2 = (i_2j\dots)$ on the refined graph multiplying by the corresponding inverse matrices
\begin{equation}\label{reweight}
    \left(\phi(w_{\text{start}})\left(\mathbf{A}_{ii}\right)^{-1}\right)_{\star l} = \sum\limits_{k=\left\{i_1, i_2\right\}}\left(\phi(w_{k})\phi\left(l\overset{\left\{i_1, i_2\right\}}{\longrightarrow} k\right)\left(\mathbf{A}_{ll}\right)^{-1}\right)_{\star},
\end{equation}
where $l=\left\{i_1, i_2\right\}$. The re-weight is needed because the original linear system and the refined one are multiplied by different block diagonal matrices and have different inverses.
\end{itemize}
We know the following two propositions to be true.
\begin{prop}
    Any computation tree can be, by the set of elementary refinements, turned to a computation tree of GaBP under a proper schedule (see discision before \cite[Lemma 18]{malioutov2006walk}) operating on the graph of the matrix (\ref{matrix_partition}) partitioned according to $F$.
\end{prop}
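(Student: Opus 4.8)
The plan is to argue by induction on the number of large-region copies in the tree that have not yet been broken into their $F$-constituents, applying one elementary refinement at a time until every node is an $F$-block. Recall that under the partition $F$ of (\ref{partition}) each large region $L$ splits into its private part $L\backslash\bigcup_{p\in C(L)}\{l_p\}$ together with its small children $l_p$; since an elementary refinement cuts a single block into two, iterating it carves $L$ into exactly these $F$-pieces. First I would take the generalized GaBP computation tree (figure \ref{fig:generalized_computation_tree:c}), whose nodes are copies of large regions of the flat region graph, and process its nodes in breadth-first order from the root.

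The first thing to record is how the edges transform. In the flat region graph two large regions are adjacent precisely when they share a small child, so an edge of the computation tree joining copies of $L$ and $L'$ carries that shared small region. When both endpoints are refined, the coupling $\mathbf{A}_{i_1i_2}$ between two $F$-blocks is nonzero only if they are adjacent in the $F$-graph of the matrix (\ref{matrix_partition}); hence every refined edge lands exactly on the $F$-graph structure, with the shared small-region block mediating the coupling, and no spurious edge appears. Weight preservation along any walk through the refined region is guaranteed by the three cases already treated --- crossing, ending, and starting walks --- together with the within-block identity (\ref{subwalks_on_refined_graph}).

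The hard part is that a computation tree must remain acyclic, whereas the internal $F$-graph of a large region $L$, namely the graph of $\mathbf{A}_{LL}$, generally contains loops, so a naive in-place refinement would insert cycles exactly as in figure \ref{fig:graph_refinement:b}. The fix is that (\ref{subwalks_on_refined_graph}) identifies the within-region walk-sum $\phi\left(l\overset{\left\{i_1, i_2\right\}}{\longrightarrow} k\right)$ with the corresponding block of $(\mathbf{A}_{LL})^{-1}$, so the loopy internal walks of $L$ can be unrolled onto a finite computation tree of $\mathbf{A}_{LL}$ in exactly the manner of the ordinary GaBP construction of \cite[Lemma 18]{malioutov2006walk}. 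I would therefore replace each block node not by the internal graph of $L$ but by this unrolled internal tree, reattaching the node's single parent edge and its child edges to whichever $F$-sub-block they actually couple to; acyclicity is then preserved and, by the weight-preservation identities, every walk-sum is carried through unchanged.

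It remains to identify the resulting object as a GaBP computation tree under a proper schedule. The order in which the $F$-blocks are eliminated on the refined tree --- inherited from the generalized-GaBP elimination order and the internal unrolling --- is a legal GaBP schedule on the $F$-graph, and the computation tree of GaBP under a given schedule is by definition the unrolling of that graph under that schedule (the construction recalled before \cite[Lemma 18]{malioutov2006walk}). Hence the refined tree is precisely such a computation tree for the matrix (\ref{matrix_partition}) partitioned according to $F$, which is the assertion. The main obstacle throughout is this tree-preservation step, and the unrolling argument is what resolves it.
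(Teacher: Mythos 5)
First, note that the paper does not actually prove this proposition: it is one of two statements introduced by ``We know the following two propositions to be true'' and used without further argument, so there is no proof of record for your attempt to match. Judged on its own merits, your proposal correctly isolates the real difficulty, which the paper passes over in silence: an elementary refinement of a node whose internal $F$-graph is loopy destroys acyclicity (figure \ref{fig:graph_refinement:b} literally produces the triangle on $i_1$, $i_2$, $j$), so the intermediate objects of your induction are not computation trees and the induction as framed cannot close. You then abandon the induction for a direct ``unroll the interior of each large region'' construction, which is the right instinct, but two essential points are missing.

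First, identity (\ref{subwalks_on_refined_graph}) equates the \emph{infinite} within-region walk-sum with a block of $\left(\mathbf{A}_{ii}\right)^{-1}$; a finite unrolled internal tree carries only finitely many of those walks, so the assertion that ``every walk-sum is carried through unchanged'' is false for any finite refined tree and can hold only in the limit of unbounded depth. The argument therefore has to be run at the level of walk \emph{sets} and absolute convergence (as in \ref{Appendices:Convergence_of_generalized_GaBP:1}), or via an exhaustion by a sequence of computation trees, neither of which you set up. Second, the reattachment step is underspecified in a way that matters: in the unrolled internal tree of $L$ every $F$-block --- in particular the small region $l$ shared with a child $L'$ --- appears in many copies, and a coarse walk that enters $L$, wanders internally, and exits to $L'$ reaches $l$ along different internal paths, i.e.\ at different copies. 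Attaching the child edge ``to whichever $F$-sub-block they actually couple to'' (a single copy) loses all but one of these walk families; one must graft a copy of the $L'$-subtree onto \emph{every} copy of $l$, and then verify that the resulting tree is a subtree of the universal cover of the graph of (\ref{matrix_partition}) in which each node has at most one child copy of each neighbour and which is realizable by a proper schedule. That verification is precisely the content of the proposition, and in your last paragraph it is asserted rather than proved.
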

\begin{prop}
    For each walk on the graph of the matrix (\ref{matrix_partition}), there is a unique walk on a sufficiently large computation tree formed by a proper schedule.
\end{prop}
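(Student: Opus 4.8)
The plan is to establish the claimed bijection by the same unrolling argument that proves the scalar statement established in \ref{Appendices:Convergence_of_GaBP} (Malioutov's Lemma~18), carried out now at the level of the $F$-blocks and then transported to a region computation tree through the elementary refinements introduced above. Concretely, I would first treat the $F$-block graph --- whose nodes are the disjoint blocks of the partition (\ref{partition}) and whose edge weights are the block matrices $\widetilde{\mathbf{R}}_{ij}$ of (\ref{generalized_matrix_R}) --- as an ordinary weighted directed graph and build its computation tree by unrolling around the terminal block of the walk. Because a tree has a unique reduced path between any two of its vertices, the lift of a walk $w=(i_1,\dots,i_L)$ is forced: reading $w$ backward from the root, the tree construction supplies a distinct copy of each admissible block-neighbour at every level, so at each step exactly one copy is consistent with the next block of $w$. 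Growing the tree to depth at least $L$ --- this is the role of the ``sufficiently large'' tree and of choosing a proper schedule that fully expands the branch traversed by $w$ --- then yields both existence and uniqueness of the lift.

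Second, I would connect this $F$-block computation tree to a computation tree actually produced by the generalized GaBP, which operates on the \emph{overlapping} large regions rather than on the disjoint $F$-blocks. Here I would invoke the immediately preceding proposition: any region computation tree refines, by a sequence of elementary refinements, to a GaBP computation tree under a proper schedule on the $F$-partitioned graph. The three cases analysed above (a walk crossing, ending at, or starting from a refined region) show that each elementary refinement transports walks faithfully --- the identities (\ref{subwalks_on_refined_graph}) and (\ref{reweight}) match the weight of a coarse walk with the combined weight of the corresponding family of refined walks. Composing the refinements in reverse therefore carries the unique $F$-block lift back to a unique walk on the region computation tree, which is the object named in the statement.

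The main obstacle I expect is the overlap bookkeeping, and in particular the crossing case. In the disjoint scalar setting the unrolling is cleanly injective, but adjacent large regions share children and the construction of \ref{Appendices:Consistency_of_generalized_GaBP} folds the shared variables into the root node; I must verify that this folding neither merges two distinct block walks into one tree walk nor creates a tree walk without a block-graph preimage. The delicate point is that a single coarse step through a shared region is replaced, after refinement, by a sum over internal sub-walks $l\overset{\{i_1,i_2\}}{\longrightarrow}k$ confined to that region (equation (\ref{subwalks_on_refined_graph})); I would need to check that these internal sub-walks are enumerated exactly once, so that the correspondence remains a genuine one-to-one map rather than merely an equality of walk-sums. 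Once this is settled, combining the proposition with Part~\ref{Appendices:Convergence_of_generalized_GaBP:1} identifies the walk-sum for the exact solution with the walk-sum computed on the computation tree by \cref{algorithm:non_symmetric_two_layers_GaBP}, which together with $\rho(\|\widetilde{\mathbf{R}}\|)<1$ yields convergence to $\mathbf{A}^{-1}\mathbf{b}$ and completes \cref{prop:Generalized_GaBP_convergence}.
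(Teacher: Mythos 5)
Your first paragraph is exactly the paper's (implicit) argument: the paper simply asserts this proposition as the block-level analogue of Malioutov's Lemma~18, and your unrolling/unique-lift reasoning on the disjoint $F$-block graph is the correct way to flesh that out. Note only that the ``computation tree formed by a proper schedule'' in the statement is the GaBP computation tree on the disjoint $F$-partitioned graph, so the overlap and refinement bookkeeping you worry about in your second and third paragraphs belongs to the neighbouring propositions and to the final composition step of \ref{Appendices:Convergence_of_generalized_GaBP:2}, not to this proposition itself.
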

Hence for each walk on the computation tree, it is always possible to find a unique set of walks on the graph of the matrix (\ref{matrix_partition}) that has the same weight after the multiplication by an appropriate inverse matrix (see \ref{reweight}). It allows us to conclude that if it is possible to define a walk-sum for matrix (\ref{matrix_partition}) (see proposition \ref{prop:absolute_convergence}), walk-sum on the computation tree converges too, so the proposition \ref{prop:Generalized_GaBP_convergence} is proven.
\end{appendices}
\end{document}